\newcommand\cyr
\renewcommand\rmdefault{wncyr} \renewcommand\sfdefault{wncyss} \renewcommand\encodingdefault{OT2} \normalfont
\DeclareTextFontCommand{\textcyr}{\cyr} 
\newcommand*\wbar[1]{
  \hbox{ \kern-0.2em%
    \vbox{%
      \hrule height 0.5pt  
      \kern0.25ex
      \hbox{%
        \kern-0.15em
        \ensuremath{#1}%
        \kern-0.05em
      }%
    }%
  \kern0.05em}%
} 
\newcommand*\wbarnew[1]{
  \hbox{ \kern-0.2em%
    \vbox{%
      \hrule height 0.5pt  
      \kern0.25ex
      \hbox{%
        \kern-0.35em
        \ensuremath{#1}%
        \kern-0.05em
      }%
    }%
  \kern0.05em}%
}
\newcommand{\Zh}{\hbox{\hspace{-5.8mm} \textcyr{Zh}}} 
\newcommand{\zh}{\hbox{\hspace{-5.8mm} \textcyr{zh}}} 
\let\textcyr\relax}
\newtheorem{theorem}{Theorem}[subsection]
\newtheorem{lemma}[theorem]{Lemma}
\newtheorem{proposition}[theorem]{Proposition}
\newtheorem{corollary}[theorem]{Corollary}
\newtheorem{thm}{Theorem}
\theoremstyle{definition}
\newtheorem{definition}[theorem]{Definition}
\newtheorem{example}[theorem]{Example}
\newtheorem*{acknowledgments*}{Acknowledgments}{}
\theoremstyle{remark}
\newcommand{\vlk}{\operatorname{{\it v}\ell{\it k}}}
\author[M. Chrisman]{Micah Chrisman}
\address{Department of Mathematics, The Ohio State University, Columbus, Ohio, USA}
\email{chrisman.76@osu.edu}
\urladdr{https://micah46.wixsite.com/micahknots}
\author[R. Todd]{Robert G. Todd}
\address{Department of Natural and Applied Sciences, Mount Mercy University, Cedar
Rapids, Iowa, USA}
\email{rtodd@mtmercy.edu}
\begin{document}

\title{Characterization and Further Applications of \\ the Bar-Natan $\!\!\!\!\!\!\Zh$-Construction}

\begin{abstract}
 Bar-Natan's $\Zh$-construction associates to each $n$ component virtual link diagram $L$ an $(n+1)$ component virtual link diagram $\Zh(L)$. If $L_0,L_1$ are equivalent virtual link diagrams, then $\Zh(L_0),\Zh(L_1)$ are equivalent as semi-welded links. The importance of the $\Zh$-construction is that it unifies several classical knot invariants with virtual knot invariants. For example, the generalized Alexander polynomial of a virtual link diagram $L$ is identical to the usual multi-variable Alexander polynomial of $\Zh(L)$. From this it follows that the generalized Alexander polynomial is a slice obstruction: it vanishes on any knot concordant to an almost classical knot. Our main result is a characterization theorem for the $\Zh$-construction in terms of almost classical links. Several consequences of this characterization are explored. First, we give a purely geometric description of the $\Zh$-construction. Secondly, the $\Zh$-construction is used to obtain a simple derivation of the Dye-Kauffman-Miyazawa polynomial. Lastly, we show that every quandle coloring invariant and quandle 2-cocycle coloring invariant can be extended to a new invariant using the $\Zh$-construction.
\end{abstract}
\subjclass[2020]{Primary: 57K12, 57K10, 57K14}
\keywords{virtual knots, Zh-construction, Dye-Kauffman-Miyazawa polynomial, quandle}

\maketitle

\section{Introduction}

\subsection{Motivation} \label{sec_motivate} Many invariants of virtual knots appear to be more closely related to invariants of multi-component classical links than to those of classical knots. For example, both classical multi-string links and virtual knots have interesting families of integer valued finite-type concordance invariants. For string-links, we have Milnor's $\bar{\mu}$-invariants \cite{HM}. For virtual knots, we have the odd writhe, Henrich-Turaev polynomial, affine index polynomial, and many others \cite{bcg1,c_ext}. In contrast, classical knots have only the Arf invariant \cite{ng}. As a second illustration, consider the generalized Alexander polynomial $\wbar{H}^{(0)}_K(x,y)$ of a virtual knot $K$ \cite{silwill,saw}. This is the first polynomial in a sequence $\wbar{H}^{(k)}_K(x,y)$ of polynomials obtained from the elementary ideal theory of a certain extension of the fundamental group. This sequence is analogous to the multi-variable Alexander polynomial $\Delta^{(k+1)}_L(x,y)$ of a classical link $L$. Indeed, $\wbar{H}^{(0)}_K(x,y)$ vanishes on an almost classical knot $K$, while $\Delta^{(1)}_L(x,y)$ is trivial when $L$ is a classical boundary link. If $K$ is an almost classical knot, $\wbar{H}_K^{(1)}(x,y)$ is essentially the usual Alexander polynomial $\Delta_K(t)$ \cite{silwill}. Futhermore, $K$ gives a two component \emph{classical} boundary link $L=K \sqcup J$ in $\mathbb{S}^3$ such that $\Delta_K(t)$ is a specialization of $\Delta_L^{(2)}(x,y)$ \cite{chrisman_todd}.  
\newline
\newline
In a series of talks (see e.g. \cite{dror}), Bar-Natan introduced a simple and elegant method that realizes this loose analogy between virtual knots and classical links. Bar-Natan's $\Zh$-\emph{construction} associates to each $n$ component virtual link diagram $D$ an $(n+1)$ component virtual link diagram $\Zh(D)=D \cup \omega$. The map is functorial in the sense that if $D_0$ and $D_1$ are equivalent virtual link diagrams, then $\Zh(D_0)$ and $\Zh(D_1)$ are semi-welded equivalent. The generalized Alexander polynomial of $D$ is then exactly the usual multi-variable Alexander polynomial of $\Zh(D)$. For a virtual knot diagram $D$, the extra variable $y$ in the  generalized Alexander polynomial $\wbar{H}_K^{(0)}(x,y)$ corresponds to a meridian of the added component $\omega$. By a theorem of Mellor \cite{mellor}, the odd writhe, Henrich-Turaev polynomial, and affine index polynomial can all be obtained from various specializations of the generalized Alexander polynomial. Moreover, the $\bar{\mu}$-invariants of $\Zh(D)$ define a set of extended Milnor invariants for $D$, called the $\wbar{\zh}$\emph{-invariants} \cite{c_ext}. In parallel with the case of classical links, the vanishing of the $\wbar{\zh}$-invariants implies the vanishing of the generalized Alexander polynomial. The $\Zh$-construction thus allows for the unification of a whole host of virtual knot invariants into a single hierarchy. Furthermore, this hierarchy uses only the standard technology of classical link theory. 
\newline
\newline
The reorganization provided by the $\Zh$-construction allows for classical geometric tools to be directly applied to the study of virtual knots. We mention a few examples in order to motivate our further study of the $\Zh$-construction. Consider the proof that the multi-variable Alexander polynomial vanishes on any link in $\mathbb{S}^3$ that is concordant to a boundary link (see e.g. \cite{hill}). Because the $\Zh$-construction is functorial under concordance \cite{bbc}, this argument can be transferred over to the virtual setting. In particular, theorems about 2-component links now become theorems about 1-component virtual links, i.e. virtual knots. The classical theorem now translates to the following: if a virtual knot is concordant to an almost classical knot, then its generalized Alexander polynomial is trivial. Similarly, the vanishing of the $\wbar{\mu}$-invariants on links concordant to boundary links translates to the vanishing of the $\wbar{\zh}$-invariants on any virtual knot concordant to an almost classical knot \cite{c_ext}. These new slice obstructions obtained from the $\Zh$-construction are quite powerful; there are many examples of non-slice virtual knots having trivial Rasmussen invariant but non-trivial$\wbar{\zh}$-invariants. 
\newline
\newline
The $\Zh$-construction thus serves dual purposes. By unifying virtual knot invariants with classical link theory, the $\Zh$-construction opens a bridge over which classical geometric tools can be ported to virtual knot theory. In the process of this translation, almost classical links appear to play a central role. The primary aim of this paper is to clarify the curious relationship between almost classical links and the $\Zh$-construction. Consequently, we find further instances of realization and extension of virtual link invariants using the $\Zh$-construction.

\subsection{Main results: characterization of $\Zh$}  Recall that almost classical links were originally defined by Silver-Williams \cite{silwill} as those virtual links having an Alexander numerable diagram. Our main result is a characterization theorem for the $\Zh$-construction in terms of Alexander numberings.  For any $n$ component virtual link diagram $D$, we consider a family of $(n+1)$ component virtual link diagrams $D \cup \gamma$ which satisfy both of the following properties (see Definition \ref{defn_zh_system} ahead for more details).
\begin{enumerate}
\item Any classical crossings involving the $\gamma$ component are over-crossings of $\gamma$ with $D$.
\item The arcs in the subdiagram $D$ of $D \cup \gamma$ are Alexander numerable.
\end{enumerate}
This will be called an \emph{Alexander system for} $D$. We will show that for any $D$, an Alexander system for $D$ can be obtained from $\Zh(D)=D \cup \omega$ by simply changing the orientation of $\omega$. This will be denoted by $\Zh^{\text{op}}(D)$. Up to semi-welded equivalence, $\Zh^{\text{op}}(D)$ is itself Alexander numerable. Conversely, if $D\cup \gamma$ is any Alexander numerable diagram which satisfies $(1)$, then $D \cup \gamma$ determines an Alexander system for $D$.  Our main theorem is below:

\begin{thm} \label{thm_main} Let $D_0,D_1$ be equivalent virtual link diagrams. Any Alexander system for $D_1$ is semi-welded equivalent to $\Zh^{\text{op}}(D_0)$. 
\end{thm}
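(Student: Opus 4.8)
The plan is to decouple the two ways in which the target diagram can vary: the choice of Alexander system, and the passage from $D_1$ to the equivalent diagram $D_0$. Since the orientation-reversed construction $\Zh^{\mathrm{op}}$ is built from $\Zh$ by reversing a single component, the functoriality of $\Zh$ recorded above transfers verbatim: reversing the orientation of $\omega$ throughout a sequence of semi-welded moves realizing $\Zh(D_0)\sim_{\mathrm{sw}}\Zh(D_1)$ again yields legal semi-welded moves, so $\Zh^{\mathrm{op}}(D_0)\sim_{\mathrm{sw}}\Zh^{\mathrm{op}}(D_1)$. Hence it suffices to prove the single-diagram statement: for a fixed diagram $D=D_1$, every Alexander system $D\cup\gamma$ for $D$ is semi-welded equivalent to $\Zh^{\mathrm{op}}(D)=D\cup\omega'$, where $\omega'$ is $\omega$ with reversed orientation.

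First I would translate the defining properties of an Alexander system into data about the over-crossings of $\gamma$. Because $\gamma$ meets $D$ only in over-crossings of $\gamma$ (property (1)), the arcs of $D$ inside $D\cup\gamma$ are exactly the arcs of $D$ further subdivided at the points where $\gamma$ passes over; each such point is a site at which the Alexander numbering of $D$ is permitted to jump. Property (2) then says precisely that the collection of these jumps trivializes the obstruction of $D$ to being Alexander numerable. I would make this quantitative by encoding the over-crossing pattern of $\gamma$ as a $1$-cochain on $D$ (the signed count of how $\gamma$ crosses each arc of $D$) and reinterpreting property (2) as the statement that this cochain is a fixed coboundary determined by $D$ alone.

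The heart of the argument, and the step I expect to be the main obstacle, is a rigidity statement: any two over-strands $\gamma_1,\gamma_2$ realizing the same jump data are related by semi-welded moves. The key leverage is that $\gamma$ is everywhere the over-strand, so the forbidden over-move is available in the semi-welded calculus; this lets $\gamma$ be slid freely across arcs of $D$ and across itself. I would first use these moves, together with the ordinary Reidemeister and virtual moves applied to $\gamma$, to normalize an arbitrary Alexander system into a canonical position in which the over-crossings occur in a standard order dictated by the Alexander numbering. Comparing $\gamma$ with $\omega'$ after normalization, their difference is an over-strand inducing zero net jump on every arc of $D$; such a curve has trivial winding over $D$ and can therefore be removed by semi-welded moves, identifying the normal form with $\Zh^{\mathrm{op}}(D)$.

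Finally, combining the single-diagram uniqueness with the functoriality reduction gives the chain $D\cup\gamma\sim_{\mathrm{sw}}\Zh^{\mathrm{op}}(D_1)\sim_{\mathrm{sw}}\Zh^{\mathrm{op}}(D_0)$, which is the assertion of the theorem. The delicate point throughout is bookkeeping the interaction between the semi-welded moves and the Alexander numbering: one must check that every normalization move preserves property (2), i.e. does not change the cohomology class of the jump cochain, so that the canonical form is genuinely an invariant of the pair $(D,\text{numbering})$ and hence equals $\Zh^{\mathrm{op}}(D)$.
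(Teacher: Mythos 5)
Your overall strategy is the same as the paper's: reduce to the statement that any two Alexander systems for a single fixed diagram are equivalent (using functoriality of $\Zh^{\text{op}}$ under virtual Reidemeister moves to pass from $D_1$ to $D_0$), and prove that single-diagram uniqueness by normalizing the over-crossings of $\gamma$ on each short arc of $D$ against the Alexander sub-numbering and then reordering and cancelling them with detour and $\omega$OCC moves. This matches the paper's lemmas on reordering the over-arcs of $\gamma$ and on reducing the crossings of $\gamma$ with a short arc labelled $m$ and $n$ at its ends to exactly $|m-n|$ crossings of a single sign.

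There is, however, one genuine gap. You assert that the jump cochain of an Alexander system is ``a fixed coboundary determined by $D$ alone,'' and consequently that after normalization the difference between $\gamma$ and $\omega^{\text{op}}$ induces zero net jump on every short arc. This is not true: an Alexander sub-numbering is only determined up to an independent integer shift $m^x \mapsto m^x + k_x$ at each classical self-crossing $x$ of $D$, so two Alexander systems for the same $D$ can have jump cochains that differ by the coboundary of an arbitrary $0$-cochain on the crossings, and then the arcwise jump data genuinely disagree. Closing this requires an additional family of moves that change the label at a single crossing --- the paper's Alexander system 3A/3B moves, which wrap $\gamma$ in a small loop around a crossing of $D$ (removable by one $\Omega 3$ and two $\Omega 2$ moves) and shift $m^x$ by $\pm 1$ --- and these must be applied first, to align the two sub-numberings at every crossing, before your comparison of jump data is valid. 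Your closing remark that the normal form is ``an invariant of the pair $(D,\text{numbering})$'' concedes the dependence on the numbering but does not remove it; without the loop moves, the conclusion that the normal form equals $\Zh^{\text{op}}(D)$ does not follow.
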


In other words, the $\Zh^{\text{op}}$-construction turns a virtual link of $n$ components into an $(n+1)$ component almost classical link. Any other method of doing so by adding an over-crossing component is semi-welded equivalent to the $\Zh^{\text{op}}$-construction. 
\newline
\newline
As is well known, every almost classical link has a homologically trivial representative in some thickened surface $\Sigma \times \mathbb{I}$, where $\mathbb{I}=[0,1]$ and $\Sigma$ is closed and oriented. Hence $\mathscr{L}$ bounds a Seifert surface in $\Sigma \times \mathbb{I}$. Theorem \ref{thm_main} implies that the $\Zh^{\text{op}}$-construction can also be described geometrically in terms of Seifert surfaces. Suppose that a virtual link $J$ is represented by some link $\mathscr{J}\subset \Sigma \times \mathbb{I}$ and $F$ is a compact oriented surface in $\Sigma \times \mathbb{I}$ such that $\partial F=\mathscr{J} \cup \mathscr{C}$, where $\mathscr{C}$ is a collection of pairwise disjoint simple closed curves in $\Sigma \times \{1\}$. We show that such a cobounding surface $F$ for $\mathscr{L}$ exists and any given cobounding surface $F$ determines an Alexander system. In particular, we give an explicit $F$ which realizes the $\Zh^{\text{op}}$-construction. 
\newline
\newline
If $D$ itself is an almost classical link, it was proved in \cite{bbc} that $\Zh(D)$ is semi-welded equivalent to a split virtual link $D \sqcup \bigcirc$, where $\bigcirc$ is an unknot diagram disjoint from $D$. Using our model of the $\Zh$-construction, we give a simple geometric proof of this fact and its converse: a virtual link $D$ is almost classical if and only if $\Zh^{\text{op}}(D)$ is semi-welded equivalent to a split Alexander system $D \sqcup \bigcirc$. In this sense, Alexander systems detect almost classical links.  

\subsection{Main results: unification $\&$ extension} Thus far, the $\Zh$-construction has only been applied to studying extensions of the group of a virtual link. From this we obtain the generalized Alexander polynomial, the various index polynomials, and the extended Milnor invariants. Here we will consider invariants arising from the quantum point of view. That is, invariants arising from skein theory and solutions to the set-theoretic Yang-Baxter equation. 
\newline
\newline
In the inaugural paper on virtual knot theory \cite{KaV}, Kauffman showed that the skein relation for the bracket polynomial defines a virtual knot invariant that extends the Jones polynomial. Later, an inequivalent (and much more powerful) extension of the Jones polynomial was independently discovered by Dye-Kauffman \cite{dye2009virtual} and Miyazawa \cite{miyazawa2008}. Rather than taking values in $\mathbb{Z}[A^{\pm 1}]$, the Dye-Kauffman-Miyazawa polynomial is valued in $\mathbb{Z}[A^{\pm 1},K_1,K_2,\ldots]$. It is defined using a different skein relation than the classical case, but agrees with the usual bracket polynomial on classical knots. Here we give an alternate derivation of the DKM-polynomial using the $\Zh$-construction. The derivation uses only the usual Kauffman bracket skein relation. Virtual linking numbers of the $\omega$ component with the state curves are used to keep track of the variables $K_1,K_2,\ldots$. Thus, the $\Zh^{\text{op}}$-construction again brings virtual link invariants back into the realm of classical link theory.  
\newline
\newline
As a self-distributive structure, every quandle yields a solution to the set-theoretic Yang-Baxter equation. The usual quandle coloring and 2-cocycle invariants of classical links also give invariants of virtual links. We will show that any of these invariants can be extended using the $\Zh$-construction. The method is simply to take the fundamental quandle of $\Zh(D)$ and then use the usual coloring or $2$-cocyle invariants of $\Zh(D)$. For a fixed finite quandle, we give examples where the extended invariants are stronger than the usual ones.

\subsection{Organization} Section \ref{sec_back} reviews the elements of virtual knots, almost classical knots, and the $\Zh$-construction that are needed in the sequel. Alexander systems are defined and explored in Section \ref{sec_alex_system}. Theorem \ref{thm_main} is proved in Section \ref{sec_thm_main}. Alexander systems and coboundary surfaces are discussed in Section \ref{sec_geom_origin}. The derivation of the DKM-polynomial appears in Section \ref{AMpoly}. For extended quandles, see Section \ref{sec_ext_quandle}. Section \ref{sec_further} concludes the paper with some further discussion and questions for future research.

\begin{figure}[htb]
\begin{tabular}{ccc}
\begin{tabular}{c} \def\svgwidth{.5in}
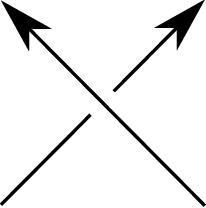 \\ negative \end{tabular} & \begin{tabular}{c} \def\svgwidth{.5in}
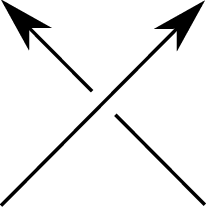 \\ positive \end{tabular} & \begin{tabular}{c} \def\svgwidth{.5in}
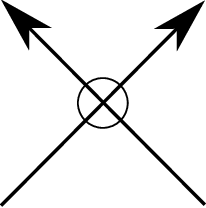 \\ virtual \end{tabular}
\end{tabular}
\caption{Crossings in virtual link diagrams.} \label{fig_crossings}
\end{figure}

\section{Background} \label{sec_back}

\subsection{Virtual knots} An (oriented) \emph{virtual knot diagram} is an (oriented) immersed circle on $\mathbb{R}^2$ such that each point of self-intersection is a transversal double point. Each double point is marked as either a positive classical crossing, a negative classical crossing, or a virtual crossing. See Figure \ref{fig_crossings}. In addition to planar isotopies, two types of moves can be performed on virtual knot diagrams (see Figure \ref{fig_moves}). First, the usual Reidemeister moves are allowed. Secondly, the detour move allows for any arc containing only virtual crossings to be redrawn in any position, where any new self-intersections are marked as virtual. The smallest equivalence relation on virtual knot diagrams generated by planar isotopies, Reidemeister moves, and detour moves will be called \emph{virtual Reidemeister equivalence}. If two virtual knot diagrams $D_0,D_1$ are virtually Reidemeister equivalent, we write $D_0 \leftrightharpoons D_1$. A virtual Reidemeister equivalence class will be called a \emph{virtual knot type}.
\newline

\begin{figure}[htb]
\begin{tabular}{cccc}
\begin{tabular}{c} \def\svgwidth{1in}
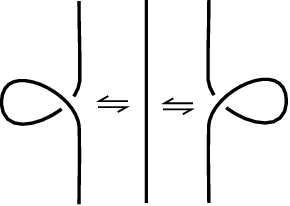 \end{tabular} & \begin{tabular}{c} \def\svgwidth{.66in}
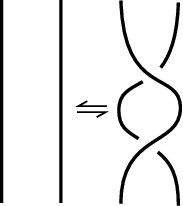  \end{tabular} & \begin{tabular}{c} \def\svgwidth{1.75in}
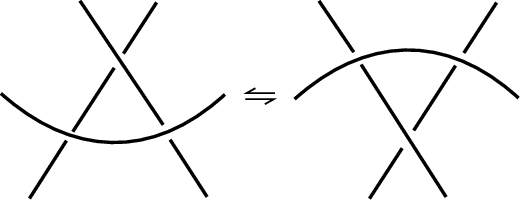  \end{tabular} & \begin{tabular}{c}
\def\svgwidth{1.8in}
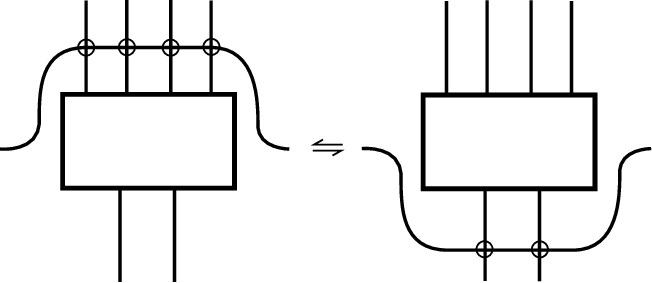
\end{tabular} \\
 $\Omega 1$ & $\Omega 2$ & $\Omega 3$ & Detour Move Example
\end{tabular}
\caption{The Reidemeister ($\Omega 1$, $\Omega 2$, $\Omega 3$) and detour moves.}\label{fig_moves}
\end{figure}

The \emph{Gauss diagram} $G$ of a virtual knot diagram $D$ is obtained as follows. For each classical crossing of $D$, connect the preimages of the corresponding double-point in the underlying immersion $\mathbb{S}^1 \to \mathbb{R}^2$ by a signed and directed chord of $\mathbb{S}^1$. The signed chord is called an \emph{arrow} of $G$. The arrow points from the over-crossing arc to the under-crossing arc. The sign of each arrow is the crossing sign: $\oplus$ for positive crossings and $\ominus$ for negative crossings. An example is shown in Figure \ref{fig_carter}. Note that virtual crossings are ignored. Gauss diagrams are considered equivalent up to orientation preserving diffeomorphisms of $\mathbb{S}^1$ that preserve the arrow endpoints, signs, and directions. The Reidemeister moves of Figure \ref{fig_moves} can be translated into moves on Gauss diagrams (see \cite{GPV}, Figure 6). The Reidemeister equivalence classes of Gauss diagrams are in one-to-one correspondence with the set of virtual knot types.
\newline
\newline
Let $n \ge 1$ be a natural number. A \emph{virtual link diagram of $n$ components} is an immersion of $n$ circles $\bigsqcup_{i=1}^n \mathbb{S}^1$ in the plane whose transversal double points are marked as either classical of virtual. A \emph{virtual link type} is a virtual Reidemeister equivalence class of virtual link diagrams. If $L$ is a virtual link diagram $L$ of $n$ components, then a \emph{Gauss diagram} $G$ of $L$ is formed by connecting each pair of preimages in each classical crossing of the immersion $\bigsqcup_{i=1}^n \mathbb{S}^1 \to \mathbb{R}^2$ by a signed and directed arrow. Note that the endpoints of an arrow may be in either the same or different component circles.
\newline

\begin{figure}[htb]
\begin{tabular}{c}
   \def\svgwidth{4in}
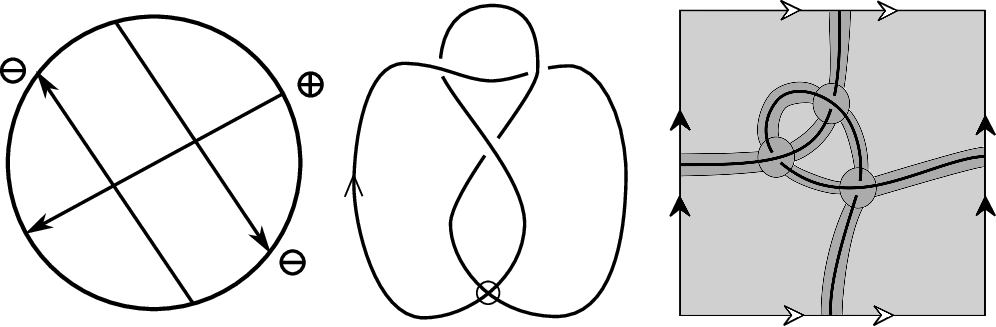 
\end{tabular}
\caption{A virtual knot diagram $K$ (center), a Gauss diagram $G$ of $K$ (left), and $K$ realized by a knot diagram $\mathscr{K}$ on its Carter surface $\Sigma \approx \mathbb{S}^1 \times \mathbb{S}^1$ (right).} \label{fig_carter}
\end{figure}

Every virtual link diagram $L$ can be drawn as a link diagram $\mathscr{L}$ on a closed orientable surface $\Sigma$. The surface $\Sigma$ is constructed as follows. Place a $0$-handle at each classical crossing of $L$. An untwisted $1$-handle is placed along each arc of $L$, where virtual crossings are ignored. Lastly, we attach $2$-handles to the boundary components to obtain the closed surface $\Sigma$. This is often called the \emph{Carter surface} of $L$. Conversely, given a link diagram $\mathscr{L}$ on a closed oriented surface $\Sigma$, a Gauss diagram $G$ of $\mathscr{L}$ is obtained in the same fashion as for virtual link diagrams. Hence, there are many surfaces on which a given virtual link diagram can be represented. In \cite{CKS}, Carter, Kamada, and Saito proved that there is a one-to-one correspondence between virtual link types and equivalence classes of knot diagrams on surfaces up to Reidemeister moves, diffeomorphisms of surfaces, and addition/subtraction of $1$-handles from surfaces that are disjoint from the link diagram.

\subsection{Almost classical links} \label{sec_review_ac} Let $D$ be a virtual link diagram. An \emph{arc} of $D$ is a path along $D$ from one under-crossing to the next, where virtual crossings are ignored. A \emph{short arc} of $D$ is a path along $D$ from one classical crossing to the next, where virtual crossings are again ignored. The short arcs of $D$ are in one-to-one correspondence to the arcs between adjacent arrow endpoints in a Gauss diagram of $D$. Let $S$ denote the set of short arcs of $D$. An \emph{Alexander numbering} of $D$ is a function $f:S \to \mathbb{Z}$ such that at each classical crossing of $D$, the assignment of short arcs to integers follows the rule depicted in Figure \ref{fig_alex_num_defn}. A virtual link diagram having an Alexander numbering is said to be \emph{Alexander numerable}.

\begin{definition}[Almost classical link]  A virtual link type $L$ is called \emph{almost classical} if there is an Alexander numerable virtual link diagram $D$ of $L$.
\label{acl_def}
\end{definition}

\begin{figure}[htb]
\begin{tabular}{cc}
\begin{tabular}{c}
\includegraphics[scale=.5]{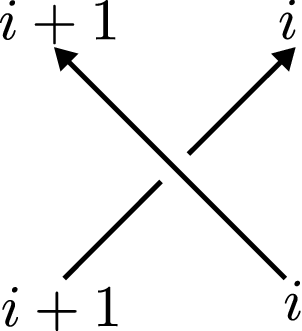} \end{tabular} & \begin{tabular}{c} \includegraphics[scale=.5]{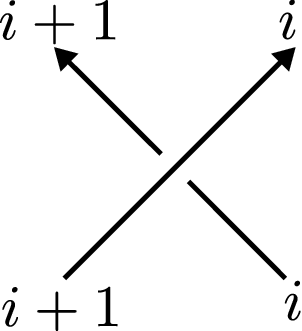} \end{tabular} 
\end{tabular}
\caption{An Alexander numbering at a crossing.} \label{fig_alex_num_defn}
\end{figure}

Almost classical links were first defined and studied by Silver and Williams \cite{silwill0,silwill1}. Note that not every diagram of an almost classical link is Alexander numerable. Given an Alexander numerable diagram $D$, let $\Sigma$ be its Carter surface, and $\mathscr{L}$ be the corresponding link in $\Sigma \times \mathbb{I}$. Then it follows that $\mathscr{L}$ is homologically trivial in $H_1(\Sigma \times \mathbb{I};\mathbb{Z})$. Hence, $\mathscr{L}$ bounds a Seifert surface in $\Sigma \times \mathbb{I}$. An algorithm for drawing such Seifert surfaces was given in \cite{vss}.
\newline
\newline
As every classical knot type can be represented by a homologically trivial knot in $\mathbb{S}^2 \times \mathbb{I}$, every classical knot type is almost classical. However, not every almost classical knot type is classical. In Green's table of virtual knots \cite{green}, only 77 of the 92800 virtual knot types up to six classical crossings are almost classical. Of these 77, exactly 10 are classical.

\subsection{The $\Zh$-construction} \label{sec_zh_defn} Here we will review the $\Zh$-construction and its basic properties. Further details can be found in \cite{dror,bbc,c_ext}. Recall that a \emph{welded knot type} is an equivalence class of virtual knot diagrams up to planar isotopies, Reidemeister moves, detour moves, and the over-crossings-commute move (OCC move) depicted in the left panel of Figure \ref{sw_equiv}. The $\Zh$-construction is well-defined up to \emph{semi-welded equivalence}, which we now describe.

\begin{figure}[htb]
\begin{tabular}{c c c}
\begingroup%
  \makeatletter%
  \providecommand\color[2][]{%
    \errmessage{(Inkscape) Color is used for the text in Inkscape, but the package 'color.sty' is not loaded}%
    \renewcommand\color[2][]{}%
  }%
  \providecommand\transparent[1]{%
    \errmessage{(Inkscape) Transparency is used (non-zero) for the text in Inkscape, but the package 'transparent.sty' is not loaded}%
    \renewcommand\transparent[1]{}%
  }%
  \providecommand\rotatebox[2]{#2}%
  \newcommand*\fsize{\dimexpr\f@size pt\relax}%
  \newcommand*\lineheight[1]{\fontsize{\fsize}{#1\fsize}\selectfont}%
  \ifx\svgwidth\undefined%
    \setlength{\unitlength}{108.3734227bp}%
    \ifx\svgscale\undefined%
      \relax%
    \else%
      \setlength{\unitlength}{\unitlength * \real{\svgscale}}%
    \fi%
  \else%
    \setlength{\unitlength}{\svgwidth}%
  \fi%
  \global\let\svgwidth\undefined%
  \global\let\svgscale\undefined%
  \makeatother%
  \begin{picture}(1,0.34257981)%
    \lineheight{1}%
    \setlength\tabcolsep{0pt}%
    \put(0,0){\includegraphics[width=\unitlength]{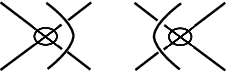}}%
    \put(0.45,0.14791085){\color[rgb]{0,0,0}\makebox(0,0)[lt]{\lineheight{1.25}\smash{\begin{tabular}[t]{l}$\rightleftharpoons_{w}$\end{tabular}}}}%
  \end{picture}%
\endgroup%
\hspace{30pt}
\begingroup%
  \makeatletter%
  \providecommand\color[2][]{%
    \errmessage{(Inkscape) Color is used for the text in Inkscape, but the package 'color.sty' is not loaded}%
    \renewcommand\color[2][]{}%
  }%
  \providecommand\transparent[1]{%
    \errmessage{(Inkscape) Transparency is used (non-zero) for the text in Inkscape, but the package 'transparent.sty' is not loaded}%
    \renewcommand\transparent[1]{}%
  }%
  \providecommand\rotatebox[2]{#2}%
  \newcommand*\fsize{\dimexpr\f@size pt\relax}%
  \newcommand*\lineheight[1]{\fontsize{\fsize}{#1\fsize}\selectfont}%
  \ifx\svgwidth\undefined%
    \setlength{\unitlength}{108.3734227bp}%
    \ifx\svgscale\undefined%
      \relax%
    \else%
      \setlength{\unitlength}{\unitlength * \real{\svgscale}}%
    \fi%
  \else%
    \setlength{\unitlength}{\svgwidth}%
  \fi%
  \global\let\svgwidth\undefined%
  \global\let\svgscale\undefined%
  \makeatother%
  \begin{picture}(1,0.34257981)%
    \lineheight{1}%
    \setlength\tabcolsep{0pt}%
    \put(0,0){\includegraphics[width=\unitlength]{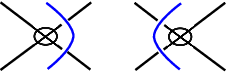}}%
    \put(0.12,0.31267558){\color[rgb]{0,0,0}\makebox(0,0)[lt]{\lineheight{1.25}\smash{\begin{tabular}[t]{l}$\omega$\end{tabular}}}}%
    \put(0.72,0.31162329){\color[rgb]{0,0,0}\makebox(0,0)[lt]{\lineheight{1.25}\smash{\begin{tabular}[t]{l}$\omega$\end{tabular}}}}%
    \put(0.45,0.14791085){\color[rgb]{0,0,0}\makebox(0,0)[lt]{\lineheight{1.25}\smash{\begin{tabular}[t]{l}$\rightleftharpoons_{sw}$\end{tabular}}}}%
  \end{picture}%
\endgroup%
 & \hspace{30pt}
\begingroup%
  \makeatletter%
  \providecommand\color[2][]{%
    \errmessage{(Inkscape) Color is used for the text in Inkscape, but the package 'color.sty' is not loaded}%
    \renewcommand\color[2][]{}%
  }%
  \providecommand\transparent[1]{%
    \errmessage{(Inkscape) Transparency is used (non-zero) for the text in Inkscape, but the package 'transparent.sty' is not loaded}%
    \renewcommand\transparent[1]{}%
  }%
  \providecommand\rotatebox[2]{#2}%
  \newcommand*\fsize{\dimexpr\f@size pt\relax}%
  \newcommand*\lineheight[1]{\fontsize{\fsize}{#1\fsize}\selectfont}%
  \ifx\svgwidth\undefined%
    \setlength{\unitlength}{90.70833702bp}%
    \ifx\svgscale\undefined%
      \relax%
    \else%
      \setlength{\unitlength}{\unitlength * \real{\svgscale}}%
    \fi%
  \else%
    \setlength{\unitlength}{\svgwidth}%
  \fi%
  \global\let\svgwidth\undefined%
  \global\let\svgscale\undefined%
  \makeatother%
  \begin{picture}(1,0.34688943)%
    \lineheight{1}%
    \setlength\tabcolsep{0pt}%
    \put(0,0){\includegraphics[width=\unitlength]{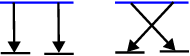}}%
    \put(-0.00256435,0.29480062){\color[rgb]{0,0,0}\makebox(0,0)[lt]{\lineheight{1.25}\smash{\begin{tabular}[t]{l}$\omega$\end{tabular}}}}%
    \put(0.60166132,0.29935069){\color[rgb]{0,0,0}\makebox(0,0)[lt]{\lineheight{1.25}\smash{\begin{tabular}[t]{l}$\omega$\end{tabular}}}}%
    \put(0.45,0.14091581){\color[rgb]{0,0,0}\makebox(0,0)[lt]{\lineheight{1.25}\smash{\begin{tabular}[t]{l}$\rightleftharpoons_{sw}$\end{tabular}}}}%
  \end{picture}%
\endgroup%
 
\end{tabular}

\caption{Welded equivalence: over-crossings commute (left). Semi-welded equivalence: $\omega$-over-crossings commute (middle). Gauss diagram formulation of semi-welded equivalence (right).}
\label{sw_equiv}
\end{figure}

\begin{definition}[Semi-welded equivalence] Let $L,L'$ be two $(n+1)$ component virtual link diagrams with components labeled $1,2,\ldots,n,n+1$. The $(n+1)$-st component will be called the $\omega$ component and will be colored blue in figures. We say that $L,L'$  are \emph{semi-welded equivalent} if they may be obtained from one another by a finite sequence of Reidemeister moves, detour moves, and  $\omega$-over-crossings-commute ($\omega$OCC) moves (see Figure \ref{sw_equiv} middle and right) in which the over-crossing arc is part of the $\omega$ component. If $L,L'$ are semi-welded equivalent, we write $L \leftrightharpoons_{sw} L'$.
\end{definition}

\begin{definition}[$\Zh$-construction]
Let $D$ be an $n$ component link diagram. Let $\Zh(D)$ be the $(n+1)$ component diagram constructed by adding a new component, denoted $\omega$, as follows. Add two solid blue over-crossing arcs around each classical crossing in the diagram $D$ as depicted in Figure \ref{zh_construction}. Then connect the ends of the arcs into a single component. Connections may be made in order. To emphasize the arbitrary nature of these connections, the connecting arcs are drawn as dashed. New crossings of $\omega$ with $D$ and $\omega$ with itself are marked as virtual crossings. We will refer to $D$ as the base component(s) of $\Zh(D)$.  
\end{definition}

\begin{figure}[htb]
\begin{tabular}{cccc||cccc}
\begin{tabular}{c} \def\svgwidth{.8in}
\begingroup%
  \makeatletter%
  \providecommand\color[2][]{%
    \errmessage{(Inkscape) Color is used for the text in Inkscape, but the package 'color.sty' is not loaded}%
    \renewcommand\color[2][]{}%
  }%
  \providecommand\transparent[1]{%
    \errmessage{(Inkscape) Transparency is used (non-zero) for the text in Inkscape, but the package 'transparent.sty' is not loaded}%
    \renewcommand\transparent[1]{}%
  }%
  \providecommand\rotatebox[2]{#2}%
  \newcommand*\fsize{\dimexpr\f@size pt\relax}%
  \newcommand*\lineheight[1]{\fontsize{\fsize}{#1\fsize}\selectfont}%
  \ifx\svgwidth\undefined%
    \setlength{\unitlength}{142.47055229bp}%
    \ifx\svgscale\undefined%
      \relax%
    \else%
      \setlength{\unitlength}{\unitlength * \real{\svgscale}}%
    \fi%
  \else%
    \setlength{\unitlength}{\svgwidth}%
  \fi%
  \global\let\svgwidth\undefined%
  \global\let\svgscale\undefined%
  \makeatother%
  \begin{picture}(1,0.95172102)%
    \lineheight{1}%
    \setlength\tabcolsep{0pt}%
    \put(0,0){\includegraphics[width=\unitlength]{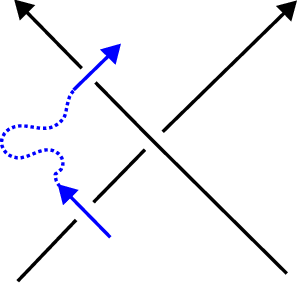}}%
    \put(0.34429434,0.8459713){\color[rgb]{0,0,0}\makebox(0,0)[lt]{\lineheight{40.54999924}\smash{\begin{tabular}[t]{l}$\omega$\end{tabular}}}}%
  \end{picture}%
\endgroup%
 \end{tabular} & $-$OR$-$ & 
\begin{tabular}{c} \def\svgwidth{.8in}
\begingroup%
  \makeatletter%
  \providecommand\color[2][]{%
    \errmessage{(Inkscape) Color is used for the text in Inkscape, but the package 'color.sty' is not loaded}%
    \renewcommand\color[2][]{}%
  }%
  \providecommand\transparent[1]{%
    \errmessage{(Inkscape) Transparency is used (non-zero) for the text in Inkscape, but the package 'transparent.sty' is not loaded}%
    \renewcommand\transparent[1]{}%
  }%
  \providecommand\rotatebox[2]{#2}%
  \newcommand*\fsize{\dimexpr\f@size pt\relax}%
  \newcommand*\lineheight[1]{\fontsize{\fsize}{#1\fsize}\selectfont}%
  \ifx\svgwidth\undefined%
    \setlength{\unitlength}{142.47055229bp}%
    \ifx\svgscale\undefined%
      \relax%
    \else%
      \setlength{\unitlength}{\unitlength * \real{\svgscale}}%
    \fi%
  \else%
    \setlength{\unitlength}{\svgwidth}%
  \fi%
  \global\let\svgwidth\undefined%
  \global\let\svgscale\undefined%
  \makeatother%
  \begin{picture}(1,0.95172102)%
    \lineheight{1}%
    \setlength\tabcolsep{0pt}%
    \put(0,0){\includegraphics[width=\unitlength]{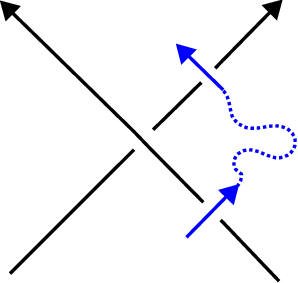}}%
    \put(0.48072041,0.84197892){\color[rgb]{0,0,0}\makebox(0,0)[lt]{\lineheight{40.54999924}\smash{\begin{tabular}[t]{l}$\omega$\end{tabular}}}}%
  \end{picture}%
\endgroup%
 \end{tabular} & & & \begin{tabular}{c} \def\svgwidth{.8in}
\begingroup%
  \makeatletter%
  \providecommand\color[2][]{%
    \errmessage{(Inkscape) Color is used for the text in Inkscape, but the package 'color.sty' is not loaded}%
    \renewcommand\color[2][]{}%
  }%
  \providecommand\transparent[1]{%
    \errmessage{(Inkscape) Transparency is used (non-zero) for the text in Inkscape, but the package 'transparent.sty' is not loaded}%
    \renewcommand\transparent[1]{}%
  }%
  \providecommand\rotatebox[2]{#2}%
  \newcommand*\fsize{\dimexpr\f@size pt\relax}%
  \newcommand*\lineheight[1]{\fontsize{\fsize}{#1\fsize}\selectfont}%
  \ifx\svgwidth\undefined%
    \setlength{\unitlength}{142.47055229bp}%
    \ifx\svgscale\undefined%
      \relax%
    \else%
      \setlength{\unitlength}{\unitlength * \real{\svgscale}}%
    \fi%
  \else%
    \setlength{\unitlength}{\svgwidth}%
  \fi%
  \global\let\svgwidth\undefined%
  \global\let\svgscale\undefined%
  \makeatother%
  \begin{picture}(1,0.95172102)%
    \lineheight{1}%
    \setlength\tabcolsep{0pt}%
    \put(0,0){\includegraphics[width=\unitlength]{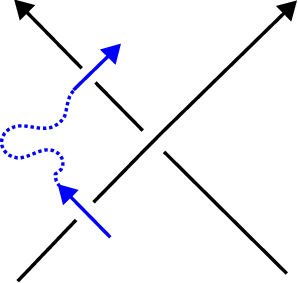}}%
    \put(0.33024771,0.83192452){\color[rgb]{0,0,0}\makebox(0,0)[lt]{\lineheight{40.54999924}\smash{\begin{tabular}[t]{l}$\omega$\end{tabular}}}}%
  \end{picture}%
\endgroup%
 \end{tabular} & $-$OR$-$ &
\begin{tabular}{c} \def\svgwidth{.8in}
\begingroup%
  \makeatletter%
  \providecommand\color[2][]{%
    \errmessage{(Inkscape) Color is used for the text in Inkscape, but the package 'color.sty' is not loaded}%
    \renewcommand\color[2][]{}%
  }%
  \providecommand\transparent[1]{%
    \errmessage{(Inkscape) Transparency is used (non-zero) for the text in Inkscape, but the package 'transparent.sty' is not loaded}%
    \renewcommand\transparent[1]{}%
  }%
  \providecommand\rotatebox[2]{#2}%
  \newcommand*\fsize{\dimexpr\f@size pt\relax}%
  \newcommand*\lineheight[1]{\fontsize{\fsize}{#1\fsize}\selectfont}%
  \ifx\svgwidth\undefined%
    \setlength{\unitlength}{142.47055229bp}%
    \ifx\svgscale\undefined%
      \relax%
    \else%
      \setlength{\unitlength}{\unitlength * \real{\svgscale}}%
    \fi%
  \else%
    \setlength{\unitlength}{\svgwidth}%
  \fi%
  \global\let\svgwidth\undefined%
  \global\let\svgscale\undefined%
  \makeatother%
  \begin{picture}(1,0.95172102)%
    \lineheight{1}%
    \setlength\tabcolsep{0pt}%
    \put(0,0){\includegraphics[width=\unitlength]{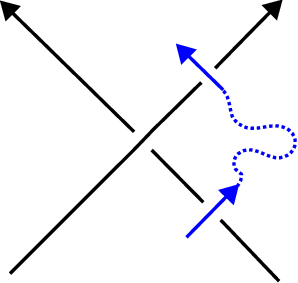}}%
    \put(0.48072041,0.84197892){\color[rgb]{0,0,0}\makebox(0,0)[lt]{\lineheight{40.54999924}\smash{\begin{tabular}[t]{l}$\omega$\end{tabular}}}}%
  \end{picture}%
\endgroup%
 \end{tabular}
\end{tabular}
\caption{Drawing the $\omega$ component in $\Zh(D)$. Each positive and negative classical crossing of $D$ contributes to $\omega$ in one of the two depicted ways.}
\label{zh_construction}
\end{figure}

\begin{example} Figure \ref{fig_zh_example} shows one way to draw the $\omega$ component for the positive right-handed trefoil. Any two ways of drawing the $\omega$ component are semi-welded equivalent.
    
\end{example}

\begin{figure}[htb]
\tiny
\[
\xymatrix{\begin{array}{c} \includegraphics[scale=.6]{zh_virt_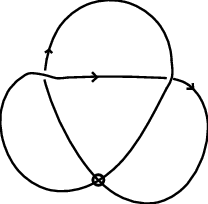} \end{array} \ar[r]^-{\Zh} & \begin{array}{c} 
\includegraphics[scale=.6]{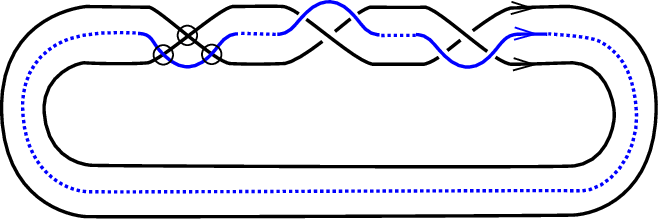} \end{array}} 
\]
\normalsize
\caption{The $\Zh$-construction for the positive virtual trefoil.}
\label{fig_zh_example}
\end{figure}

\begin{theorem}[\cite{bbc}, Proposition 4.2] The $\Zh$-construction is well defined up to semi-welded equivalence.
Furthermore, If $D_{0}$ and $D_{1}$ are equivalent virtual link diagrams then $\Zh(D_{0})$
and $\Zh(D_{0})$ are semi-welded equivalent. 
\label{sw_eq_thrm}
\end{theorem}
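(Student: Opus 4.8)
The plan is to prove the two assertions separately, reducing each to a finite check on local diagrams (or, equivalently, on Gauss diagrams). The governing principle throughout is that, apart from the blue over-arcs straddling the classical crossings of $D$, every arc of the $\omega$ component meets $D$ and itself only in virtual crossings; by the detour move such an arc may be redrawn in any position. Thus the dashed connecting arcs of Figure \ref{zh_construction} carry no information beyond the combinatorial record of which staple-endpoints they join, and the plane location of $\omega$ away from the staples is irrelevant up to $\leftrightharpoons_{sw}$.

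To prove well-definedness I would pass to Gauss diagrams. Each classical crossing of $D$ contributes a fixed local cluster of arrows directed \emph{from} $\omega$ \emph{into} $D$ (recording the $\omega$-over-arcs), and all the tails of these arrows lie on the $\omega$ circle while their heads sit at fixed positions on $D$. The two local options at a crossing (the ``$-$OR$-$'' of Figure \ref{zh_construction}) and every admissible way of closing up the connecting arcs amount to a single piece of data: the cyclic order of the arrow tails along $\omega$. The Gauss-diagram form of the $\omega$OCC move (right panel of Figure \ref{sw_equiv}) transposes two adjacent tails on $\omega$, and since any cyclic order is reached from any other by adjacent transpositions, all choices yield semi-welded equivalent diagrams. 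This is the step where the $\omega$OCC move is genuinely used, and it is the reason the construction is only well defined up to $\leftrightharpoons_{sw}$ rather than up to $\leftrightharpoons$.

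For functoriality it suffices to treat the generators of virtual Reidemeister equivalence. Planar isotopies and detour moves on $D$ are immediate, as $\omega$ is carried along and re-settled by detour moves. An $\Omega 1$ move on $D$ creates a single staple at the kink; since $\omega$ merely passes over the new loop, I would free its over-arc by Reidemeister and detour moves and then delete the loop by $\Omega 1$ on $D$. An $\Omega 2$ move creates two staples on the resulting bigon; after using well-definedness to make their tails on $\omega$ adjacent, I would free both over-arcs and remove the bigon by $\Omega 2$ on $D$.

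The main obstacle is the $\Omega 3$ move. Here the three strands of $D$ slide past one another and the three staples must be carried across the triangle; crucially, the order in which $\omega$ runs over the three strands is permuted by the move. This permutation cannot be produced by Reidemeister and detour moves alone and is realized precisely by the $\omega$OCC reordering established in the well-definedness step. The real work of the proof is to track each of the three staples through the $\Omega 3$ move and to match the before-and-after tail orderings on $\omega$ by a sequence of $\omega$OCC moves; once this is done, together with the detour principle the remaining cases are routine.
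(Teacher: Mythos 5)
Your well-definedness argument is, at its core, the one the paper gives in the paragraph following Theorem \ref{sw_eq_thrm}: reduce the ambiguity to the order of the $\omega$-over-arcs along $\omega$ and realize every reordering by adjacent transpositions, each effected by a detour move plus one $\omega$OCC move (Figure \ref{sw_AlexNum}); passing to Gauss diagrams is cosmetic. But your claim that the two local options of Figure \ref{zh_construction}, together with the choice of connecting arcs, ``amount to a single piece of data: the cyclic order of the arrow tails along $\omega$'' is false. The two placements of the staple put the heads of the $\omega$-arrows on \emph{different} short arcs of $D$ --- on opposite sides of the classical crossing --- so they determine genuinely different Gauss diagrams, not the same data. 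Reconciling them requires sliding the pair of over-arcs across the crossing of $D$ (an argument using $\Omega 2$, $\Omega 3$ and detour moves, not $\omega$OCC), and that step is absent. The transposition argument handles only the connecting-arc ambiguity.

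For functoriality your outline is sensible, but it is an outline. You correctly identify the $\Omega 3$ case as the one that forces semi-welded rather than virtual equivalence, and then write that ``the real work of the proof is to track each of the three staples through the $\Omega 3$ move'' --- which announces the proof rather than gives it. The $\Omega 1$ and $\Omega 2$ cases also quietly rely on the unproved slide: whether a staple's two over-crossings with $D$ cobound a bigon (so that the staple can be ``freed'' and cancelled) depends on which of the two local placements was used, so these cases are not routine until the local-option equivalence is established. For what it is worth, the paper itself does not prove this half either; it imports the whole statement from \cite{bbc}, Proposition 4.2, and only sketches well-definedness. As submitted, your proposal reproduces that sketch (with the flaw noted above) and defers exactly the part that would constitute the actual proof.
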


It is important to emphasize the role of semi-welded equivalence in the definition of the $\Zh$ map. First note that if $\omega,\omega'$ are any two ways of connecting the  solid blue arcs together for a virtual link diagram $D$, then $D \cup \omega \leftrightharpoons_{sw} D \cup \omega'$. To see this, choose a base point on the $\omega$ component. Using detour moves, it can be arranged so that when traveling along $\omega$, all of the virtual crossings of $\omega$ with $D$ are passed before the classical crossings. Then passing a pair of classical crossings of $\omega$ with $D$ appears as in Figure \ref{sw_AlexNum}, left. Using a detour move and an $\omega$OCC move, the order of two adjacent crossings can be transposed. See again Figure \ref{sw_AlexNum}. Since all permutations are generated by transpositions, it follows that the over-crossings of $\omega$ with $D$ can be arranged in any order. Hence, the $\Zh$-construction is well defined.  
\newline
\begin{figure}[htb]
\[
\begin{array}{c} \includegraphics[scale=.35]{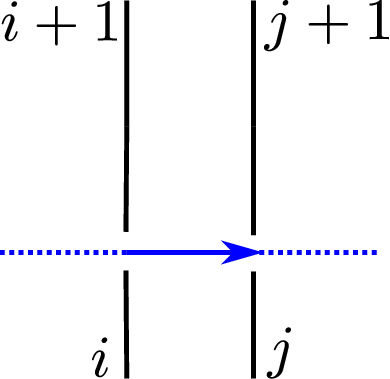} \end{array} \leftrightharpoons_{sw} \begin{array}{c} \includegraphics[scale=.35]{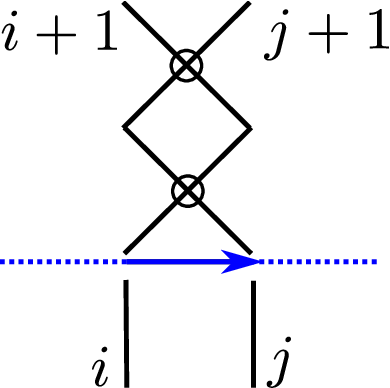} \end{array} \leftrightharpoons_{sw} \begin{array}{c} \includegraphics[scale=.35]{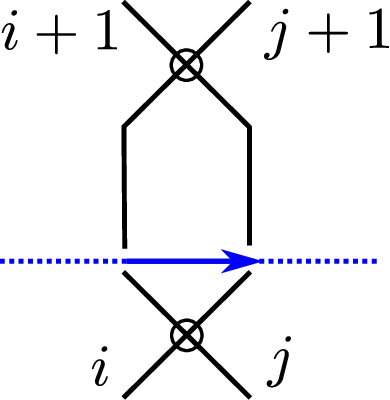} \end{array} 
\]
\caption{The $\omega \text{OCC}$ move allows the order of the classical crossings along $\omega$ to be switched without changing the integer labeling of the base component.}
\label{sw_AlexNum}
\end{figure}

While the recipe for constructing $\Zh(D)$ is completely diagrammatic, it encodes interesting geometric properties of the virtual link type of $D$. For example, the $\Zh$-map is functorial under virtual link concordance (see \cite{bbc}, Theorem 4.3). In \cite{dror}, Bar-Natan noted that if $D$ is classical, then $\Zh(D)$ is semi-welded equivalent to a split $2$-component classical link diagram $D \cup \omega$ where $\omega$ is an unknotted circle. In \cite{bbc}, this was generalized to the following result for almost classical links. A geometric proof of this theorem will be in Section \ref{sec_geom_origin} ahead. 

\begin{theorem}[\cite{bbc}, Theorem 4.5] \label{thm_splits} If $L$ is a diagram of an almost classical link, then $\Zh(L)$ is semi-welded equivalent to the split link $L \sqcup \omega$, where $\omega$ is an unknot disjoint from the diagram of $L$.
\end{theorem}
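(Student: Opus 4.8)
The plan is to argue diagrammatically, using an Alexander numbering to strip the $\omega$ component off of $D$ one ``level'' at a time. By Theorem~\ref{sw_eq_thrm} the $\Zh$-construction is well defined up to semi-welded equivalence, so it suffices to verify the conclusion for a single convenient diagram of $L$. Since $L$ is almost classical, Definition~\ref{acl_def} lets me fix an Alexander numerable diagram $D$ of $L$ together with an Alexander numbering $f\colon S\to\mathbb{Z}$ of its short arcs, and then form $\Zh(D)=D\cup\omega$. The goal is to show $D\cup\omega \leftrightharpoons_{sw} D\sqcup\bigcirc$, where $\bigcirc$ is an unknot disjoint from $D$.

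The heart of the argument exploits the fact that every crossing of $\omega$ with $D$ is an $\omega$-over-crossing and every self-crossing of $\omega$ is virtual, so $\omega$ sits entirely ``over'' $D$. As in the discussion following Theorem~\ref{sw_eq_thrm} (Figure~\ref{sw_AlexNum}), detour moves together with $\omega$OCC moves let me transpose adjacent over-crossings along $\omega$ freely, without altering $D$ or the numbering $f$. I would use this freedom to reorganize $\omega$ so that it sweeps across the short arcs of $D$ grouped by Alexander level: for each $k$, let $R_k$ be the set of short arcs with $f=k$. The defining rule of Figure~\ref{fig_alex_num_defn} guarantees that $f$ changes in a controlled way at each crossing, so the over-arcs of $\omega$ at a given crossing meet $D$ on short arcs whose labels are prescribed by $f$. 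Reordering the over-crossings by level, and then matching within each level the over-crossing by which $\omega$ enters $R_k$ with the one by which it exits, should produce pairs of oppositely-signed over-crossings that are adjacent along both strands and cancel by a Reidemeister~II move. Carrying this out for every $k$ would remove all crossings between $\omega$ and $D$, leaving $\omega$ as a planar circle disjoint from $D$, i.e. the split unknot.

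The main obstacle is verifying that this level-by-level cancellation actually closes up globally. The pairing of ``entry'' and ``exit'' over-crossings within a level $R_k$ is consistent only because $f$ is a globally defined, integer-valued Alexander numbering: it is exactly this that forces each level region to be entered and exited a balanced number of times, so that no over-crossing is left unpaired. This is the step where almost classicality is indispensable; for a diagram that is not Alexander numerable the same reorganization leaves a nontrivial residue of over-crossings, reflecting the fact that $\Zh$ computes the (possibly nontrivial) generalized Alexander polynomial.

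I would also record the parallel geometric route, which is the one foreshadowed for Section~\ref{sec_geom_origin}: realize $L$ by a homologically trivial link $\mathscr{L}\subset\Sigma\times\mathbb{I}$ bounding a Seifert surface, realize $\omega$ as the associated cobounding data, and then push $\omega$ off the Seifert surface into the top level $\Sigma\times\{1\}$, where it is split from $\mathscr{L}$ and bounds a disk. The combinatorial and geometric arguments are two views of the same phenomenon: the Alexander numbering is the diagrammatic shadow of the Seifert surface, and the vanishing of the cancellation obstruction is the shadow of homological triviality.
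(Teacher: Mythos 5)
Your overall strategy --- use the Alexander numbering of $D$ to guide a cancellation of the over-crossings of $\omega$ --- is the right instinct, and your closing paragraph even gestures at the route the paper actually takes (a Seifert surface tubed up to $\Sigma\times\{1\}$, fed through Theorem \ref{thm_rss} and the classification of Alexander systems). But the combinatorial argument at the heart of your proposal has a genuine gap in its cancellation mechanism. The moves you invoke to reorganize $\omega$ --- detour moves and $\omega$OCC moves --- only permute the order in which the over-crossing arcs occur \emph{along $\omega$}; they do not move those arcs \emph{along $D$}. A Reidemeister II cancellation of two over-crossings of $\omega$ with $D$ requires the two crossings to be adjacent on the same short arc of $D$, with no classical crossing of $D$ between them, and of opposite sign. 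In $\Zh(D)$ the over-arcs of $\omega$ are pinned next to the classical crossings of $D$, so an ``entry'' and an ``exit'' crossing of one of your level regions $R_k$ generally sit on different short arcs of $D$, separated by classical crossings of $D$. To bring them together you must slide an over-arc of $\omega$ across a classical crossing of $D$ --- an $\Omega 3$-type manoeuvre, which is exactly the Alexander system 3A/3B move of Figure \ref{fig:ocp3a_z} --- and it is only by counting how many such slides are needed at each crossing that the numbering $f$ actually enters. Your sketch never performs, or even mentions, this step, and without it no cancellation can begin.

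A second, related omission: the sub-numbering that $\omega$ induces on the short arcs of $D$ in $\Zh(D)$ takes values in $\{0,1,2\}$ (Lemma \ref{lemma_zh_is_alexander}) and has no a priori relation to your chosen Alexander numbering $f$; your pairing of entry and exit crossings of $R_k$ tacitly assumes the two numberings can be matched, which is precisely what must be proved. The paper's machinery resolves both issues in order: Lemma \ref{main_lem} first uses the 3A/3B moves to force the labels at each classical crossing of $D$ to agree, and only then does Lemma \ref{arc_lemma} cancel oppositely oriented over-arcs along each individual short arc. Once that machinery is in place the theorem is a one-liner --- $\Zh^{\text{op}}(D)$ and $D\sqcup\bigcirc$ are both Alexander systems for the Alexander numerable diagram $D$ (Lemma \ref{lemma_zh_is_alexander} and Corollary \ref{cor_split_iff_alex_num}), hence equivalent by Lemma \ref{main_lem} --- while the proof the paper actually writes out for Theorem \ref{thm_zh_splits} runs the geometric version you mention only in passing. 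Either route works; your proposal as written completes neither.
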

 
The $\Zh$-construction has been defined here following the orientation conventions of \cite{dror,bbc,c_ext}. For many of our applications, the opposite orientation of $\omega$ is more natural. 

\begin{definition}[$\Zh^{\text{op}}$-construction] By $\Zh^{\text{op}}(D)=D \cup \omega^{\text{op}}$, we mean a virtual link diagram obtained from $\Zh(D)=D\cup \omega$ by changing the orientation of $\omega$.
\end{definition}

Clearly, the $\Zh^{\text{op}}$-construction is well-defined up to semi-welded equivalence and if $D_0,D_1$ are equivalent virtual link diagrams, then $\Zh^{\text{op}}(D_0) \leftrightharpoons_{sw} \Zh^{\text{op}}(D_1)$. Furthermore, if $D$ is almost classical, then $\Zh^{\text{op}}(D)$ splits as in Theorem \ref{thm_splits}.

\section{Characterizing the \texorpdfstring{$\Zh$}{Lg}-construction} 

\subsection{Alexander systems} \label{sec_alex_system} Let $D$ be an $n$ component virtual link diagram and $D\cup \gamma$ an $(n+1)$ component virtual link containing $D$ as a sublink. In figures, the component $\gamma$ will be drawn in blue while the diagram $D$ will be drawn in black. Each short arc of $D \cup \gamma$ lies in a short arc of the original link $D$ or in the added component $\gamma$. Let $D_{\gamma}$ denote the subset of the short arcs of $D \cup \gamma$ that are contained in $D$. 

\begin{figure}[htb]
\begin{tabular}{ccc}
\begin{tabular}{c} \includegraphics[scale=.5]{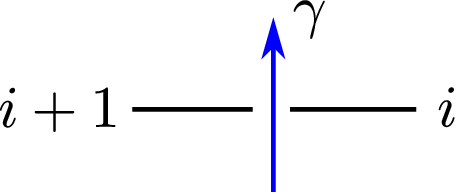} \end{tabular} & \begin{tabular}{c}
\includegraphics[scale=.4]{alex_numb_def_nice_II.eps} \end{tabular} & \begin{tabular}{c} \includegraphics[scale=.4]{alex_numb_def_nice.eps} \end{tabular} \\
Definition  \ref{defn_zh_system}, (2a) & \multicolumn{2}{c}{Definition  \ref{defn_zh_system}, (2b)} 
\end{tabular}
\caption{Alexander sub-numberings.} \label{fig_zh_system_defn}
\end{figure}

\begin{definition}[Alexander system] \label{defn_zh_system} Let $D$ be an $n$ component virtual link diagram. An \emph{Alexander system for $D$} is a triple $(D,\gamma,\Gamma)$, where $D \cup \gamma$ is an $(n+1)$ component virtual link diagram and $\Gamma:D_{\gamma} \to \mathbb{Z}$ is an integer labeling of $D_{\gamma}$ such that the following two conditions are satisfied.
\begin{enumerate}
    \item For every classical crossing involving the $\gamma$ component, the over-crossing arc lies in $\gamma$ and the under-crossing arcs lie in $D$.  
\item $\Gamma$ is an \emph{Alexander sub-numbering} of $D_{\gamma}$. That is, $\Gamma$ resembles:
\begin{enumerate}    
    \item Figure \ref{fig_zh_system_defn}, left, at every classical crossing of $\gamma$ and $D$, and     
    \item Figure \ref{fig_zh_system_defn}, right, at every classical self-crossing of $D$.
  \end{enumerate}
\end{enumerate}
Two Alexander systems $(D,\gamma,\Gamma)$,$(D,\gamma',\Gamma')$ for $D$ are called  \emph{equivalent} if $D \cup \gamma \leftrightharpoons_{sw} D \cup \gamma'$. 
\end{definition}

Some remarks about this definition are in order. First, observe that the labeling function $\Gamma$ is not used in the definition of equivalence for Alexander systems. If an Alexander sub-numbering for $D_{\gamma}$ exists, then the choice of this function can be made arbitrarily. It is fixed in the notation simply for the sake of definiteness. Secondly, it is important to emphasize that the short arcs of $\gamma$ in $D \cup \gamma$ are not labeled by $\Gamma$. Only the short arcs in $D_{\gamma}$ are labeled. The arcs of $\gamma$ are not labeled because the virtual link diagram $D \cup \gamma$ determined by an Alexander system $(D,\gamma,\Gamma)$ need not be Alexander numerable. Nonetheless, as the next lemma shows, the $\Zh^{\text{op}}$-construction can be drawn in such a way that in the resulting Alexander system $(D,\omega^{op},\Omega)$, $D \cup \omega^{op}$ is Alexander numerable.

\begin{figure}[htb]
\begin{subfigure}[b]{0.4\textwidth}
\centering
\includegraphics[scale=.4]{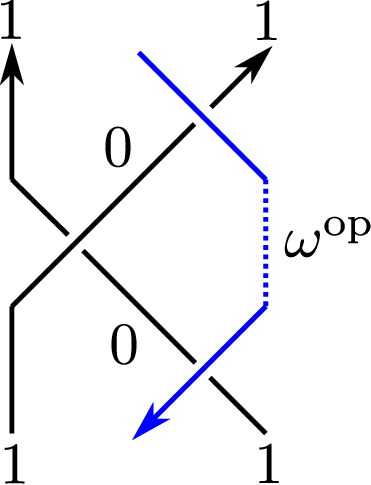}
\caption{}
\end{subfigure}
\begin{subfigure}[b]{0.4\textwidth}
\centering
\includegraphics[scale=.4]{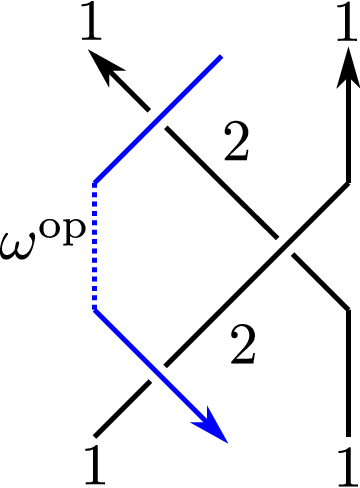}
\label{fig:AN_base_component}
\caption{}
\end{subfigure}
\caption{Panel (A) shows the Alexander sub-numbering with $\omega^{\text{op}}$ on the right. Panel (B) shows the Alexander sub-numbering for $\omega^{\text{op}}$ on the left. The same Alexander sub-numberings are used for negative crossings of $D$.} \label{zh-alex-sub}.
\end{figure}

\begin{figure}[htb]
\begin{subfigure}[b]{0.4\textwidth}
\centering
\includegraphics[scale=.4]{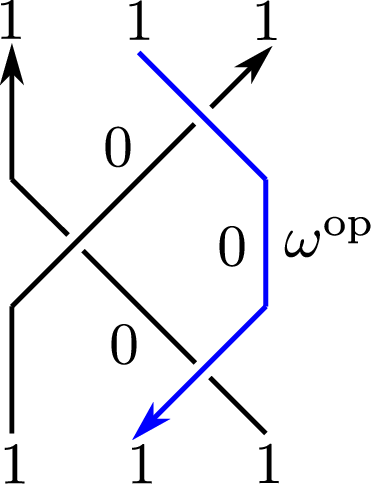}
\caption{}
\label{Zh_op_is_AN_right}
\end{subfigure}
\begin{subfigure}[b]{0.4\textwidth}
\centering
\includegraphics[scale=.4]{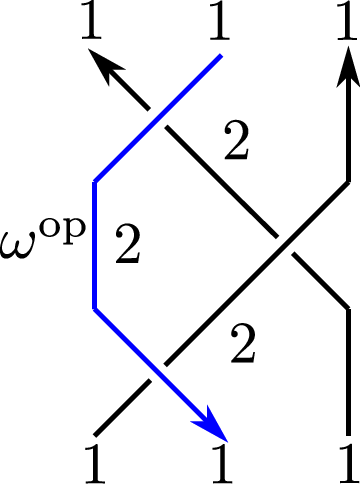}
\caption{}
\label{Zh_op_is_AN_left}
\end{subfigure}
\caption{Panel (A) shows the Alexander numbering with $\omega^{\text{op}}$ on the right. Panel (B) shows the Alexander numbering for $\omega^{\text{op}}$ on the left.  The same Alexander numberings are used for negative crossings of $D$.} \label{Zh_op_is_AN}.
\end{figure}

\begin{lemma} \label{lemma_zh_is_alexander} Let $D$ be a virtual link diagram. Then the following hold.
\begin{enumerate}
\item $\Zh^{\text{op}}(D)=D \cup \omega^{\text{op}}$ defines an Alexander system $(D,\omega^{\text{op}},\Omega)$ for $D$.
\item The $\omega^{\text{op}}$ component of $\Zh^{\text{op}}(D)$ can be drawn so that $D \cup \omega^{op}$ is Alexander numerable. 
\end{enumerate}
\end{lemma}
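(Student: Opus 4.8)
The plan is to handle part~(1) by first checking condition~(1) of Definition~\ref{defn_zh_system} by inspection and then constructing the sub-numbering $\Omega$ demanded by condition~(2); part~(2) will follow by enlarging $\Omega$ over the $\omega^{\text{op}}$-arcs after choosing a convenient way to close up the blue component.

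Condition~(1) of Definition~\ref{defn_zh_system} is immediate from the construction. By the definition of $\Zh$ (Figure~\ref{zh_construction}), the only classical crossings meeting the added component are the over-crossing arcs placed at each classical crossing of $D$, while every self-crossing of the new component and every crossing arising from the dashed connecting arcs is virtual. Passing to $\omega^{\text{op}}$ reverses the orientation of $\omega$ but changes neither the over/under information nor the classical/virtual status of any crossing, so every classical crossing incident to $\omega^{\text{op}}$ is an over-crossing of $\omega^{\text{op}}$ over $D$, exactly as required.

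For condition~(2) the plan is to define $\Omega$ on $D_{\omega^{\text{op}}}$ by integrating prescribed local jumps. Starting from a basepoint on each component of $D$ and travelling along it, I would assign integers to the successive short arcs of $D_{\omega^{\text{op}}}$ so that the label jumps according to Figure~\ref{fig_zh_system_defn}, left (a jump of $\pm1$ fixed by the orientations) at each crossing of $\omega^{\text{op}}$ with $D$, and according to the ordinary rule of Figure~\ref{fig_zh_system_defn}, right, at each self-crossing of $D$; the resulting local pictures are precisely those of Figure~\ref{zh-alex-sub}. The main obstacle is global well-definedness: on closing up each component circle the accumulated jumps must cancel. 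Here the design of the $\Zh$-construction is decisive. Each self-crossing $c$ of $D$ is flanked by the two over-crossing arcs of $\omega^{\text{op}}$ added at $c$, and I would verify, case by case on the crossing sign and on the two admissible placements of the blue arcs (the ``$-$OR$-$'' options of Figure~\ref{zh_construction}), that the $\pm1$ jumps contributed by these two $\omega^{\text{op}}$-crossings supply exactly the correction needed for the label to return consistently; equivalently, the over-crossings of $\omega^{\text{op}}$ cancel, crossing by crossing, the defect that in general obstructs an honest Alexander numbering of $D$. It is precisely the reversal of orientation built into $\Zh^{\text{op}}$ (as opposed to $\Zh$) that gives these jumps the correct sign, so that they cancel the defect rather than double it. Global consistency of $\Omega$ then follows, which proves~(1).

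For part~(2) it remains to extend $\Omega$ over the short arcs of $\omega^{\text{op}}$ so that the \emph{entire} diagram $D\cup\omega^{\text{op}}$ satisfies the rule of Figure~\ref{fig_alex_num_defn}, the target local pictures being those of Figure~\ref{Zh_op_is_AN}. Each short arc of $\omega^{\text{op}}$ runs between two consecutive classical over-crossings (virtual crossings are ignored), and the numbering rule forces its label to be compatible with the $D$-labels at both of its endpoints. The freedom I would exploit is that, by Theorem~\ref{sw_eq_thrm} together with the $\omega$OCC move of Figure~\ref{sw_AlexNum}, the blue over-crossing arcs may be connected in any order up to semi-welded equivalence without disturbing the labeling of the base component. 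The plan is therefore to connect them in the order prescribed by the $D$-labels recorded by $\Omega$, so that the values required along $\omega^{\text{op}}$ vary compatibly and can be realized by a single consistent integer labeling of the $\omega^{\text{op}}$-arcs. I expect the main subtlety of part~(2) to be exactly this bookkeeping: checking that ``connecting in order'' can always be arranged using detour and $\omega$OCC moves, and that it simultaneously closes up the $\omega^{\text{op}}$-labels and preserves the sub-numbering $\Omega$ already constructed on $D_{\omega^{\text{op}}}$.
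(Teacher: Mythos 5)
Your part (1) is correct and, despite the different packaging, is essentially the paper's argument: the case-by-case check that the two flanking over-crossings of $\omega^{\text{op}}$ make the label ``return consistently'' is exactly the content of Figure \ref{zh-alex-sub}, where every short arc entering or leaving a crossing neighborhood is labeled $1$ and the exceptional labels $0$ (resp.\ $2$) occur only on the two short arcs trapped between the blue arcs and the crossing of $D$. One caution about your framing: for a sub-numbering built by integrating jumps, ``the accumulated jumps cancel around each component'' is not by itself the right well-definedness condition, since rule (2b) also couples the labels of the two strands meeting at each self-crossing of $D$ (and couples different components of $D$ at mixed crossings). What actually closes the argument is the stronger local statement that the label equals one universal constant ($1$) on every arc outside the crossing neighborhoods; once that is observed there is nothing to integrate and no holonomy to check. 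Your local cancellation does establish this, but you should say so explicitly and fix the basepoint value $1$ on every component of $D$, so that crossings between distinct components are also covered.

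Part (2) is where the proposal falls short of a proof. You correctly identify that the labels forced on the short arcs of $\omega^{\text{op}}$ must chain up consistently around the whole $\omega^{\text{op}}$ circle, but ``connect the arcs in the order prescribed by the $D$-labels'' is not yet a construction: at each over-crossing the numbering rule pins the two adjacent $\omega^{\text{op}}$-labels to the specific values $i$ and $i+1$ inherited from $D_{\omega^{\text{op}}}$, so one still has to exhibit an ordering in which these pinned values are realized consecutively. The paper resolves this with one concrete choice that makes the bookkeeping disappear: at each classical crossing $x$ of $D$, first join the two blue arcs belonging to $x$ into a single piece as in Figure \ref{Zh_op_is_AN}; along that piece the $\omega^{\text{op}}$-label runs $1 \to 0 \to 1$ (or $1 \to 2 \to 1$), returning to the constant $1$ before the piece ends. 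The resulting longer pieces can then be concatenated in any order whatsoever, and labeling every remaining arc $1$ gives an Alexander numbering of all of $D \cup \omega^{\text{op}}$. Your plan, if pushed through, would essentially force you to rediscover this pairing; as written, the step you defer as ``bookkeeping'' is the actual content of part (2) and is left unproved.
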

\begin{proof}
First suppose that near a positive crossing of $D$, $\Zh^{\text{op}}(D)$ resembles Figure \ref{zh-alex-sub}, left. Label the short arcs of $D_{\gamma}$ with $0$ or $1$ as shown in Figure \ref{zh-alex-sub}, left. Alternatively, the two blue arcs flanking a classical crossing of $D$ in $\Zh^{\text{op}}(D)$ can instead be drawn on the left, as in Figure \ref{zh-alex-sub}, right. In this case, the short arcs may be labeled as shown with a $1$ or $2$. For a negative crossing of $D$, the labeling of the short arcs is obtained from Figure \ref{zh-alex-sub} by changing the depicted crossing of $D$ from positive to negative. All together, this defines a function $\Omega:D_{\omega^{\text{op}}} \to \mathbb{Z}$ satisfying Definition \ref{defn_zh_system}. The labeling is consistent since all the incoming and outgoing short arcs near a classical crossing of $D$ are labeled with a $1$.
\newline

For the second claim, we begin at a classical crossing $x$ of $D$. Connect the two blue over-crossing arcs of $\omega^{\text{op}}$ near $x$ so that they appear as on the left or right in Figure \ref{Zh_op_is_AN}. Do this at every classical crossing $x$ of $D$. Then connect these longer pieces together to finish making $\omega^{\text{op}}$. Next, label the short arcs of $D \cup \omega^{\text{op}}$ as shown in Figure \ref{Zh_op_is_AN}. Although a positive crossing is shown, the same connections and labeling is to be used at negative crossings of $D$. This labeling is an Alexander numbering as all the incoming and outgoing arcs are labeled $1$.  \end{proof}

\begin{figure}[htb]
\def\svgwidth{.85 in} 
\begingroup%
  \makeatletter%
  \providecommand\color[2][]{%
    \errmessage{(Inkscape) Color is used for the text in Inkscape, but the package 'color.sty' is not loaded}%
    \renewcommand\color[2][]{}%
  }%
  \providecommand\transparent[1]{%
    \errmessage{(Inkscape) Transparency is used (non-zero) for the text in Inkscape, but the package 'transparent.sty' is not loaded}%
    \renewcommand\transparent[1]{}%
  }%
  \providecommand\rotatebox[2]{#2}%
  \newcommand*\fsize{\dimexpr\f@size pt\relax}%
  \newcommand*\lineheight[1]{\fontsize{\fsize}{#1\fsize}\selectfont}%
  \ifx\svgwidth\undefined%
    \setlength{\unitlength}{124.36707115bp}%
    \ifx\svgscale\undefined%
      \relax%
    \else%
      \setlength{\unitlength}{\unitlength * \real{\svgscale}}%
    \fi%
  \else%
    \setlength{\unitlength}{\svgwidth}%
  \fi%
  \global\let\svgwidth\undefined%
  \global\let\svgscale\undefined%
  \makeatother%
  \begin{picture}(1,0.62992838)%
    \lineheight{1}%
    \setlength\tabcolsep{0pt}%
    \put(0,0){\includegraphics[width=\unitlength]{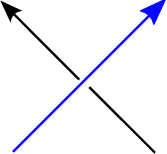}}%
	\put(0.7,0.8){\makebox(0,0)[lt]{\lineheight{1.25}\smash{\begin{tabular}[t]{l}$\gamma$\end{tabular}}}}%
    \put(0.9,0.63){\makebox(0,0)[lt]{\lineheight{1.25}\smash{\begin{tabular}[t]{l}$i$\end{tabular}}}}%
    \put(0.9,0.1){\makebox(0,0)[lt]{\lineheight{1.25}\smash{\begin{tabular}[t]{l}$i$\end{tabular}}}}%
    \put(-0.25,0.1){\makebox(0,0)[lt]{\lineheight{1.25}\smash{\begin{tabular}[t]{l}$i+1$\end{tabular}}}}%
    \put(-0.25,0.6){\makebox(0,0)[lt]{\lineheight{1.25}\smash{\begin{tabular}[t]{l}$i+1$\end{tabular}}}}%
  \end{picture}%
\endgroup%
 
\def\svgwidth{.85 in}\hspace{50pt}
\begingroup%
  \makeatletter%
  \providecommand\color[2][]{%
    \errmessage{(Inkscape) Color is used for the text in Inkscape, but the package 'color.sty' is not loaded}%
    \renewcommand\color[2][]{}%
  }%
  \providecommand\transparent[1]{%
    \errmessage{(Inkscape) Transparency is used (non-zero) for the text in Inkscape, but the package 'transparent.sty' is not loaded}%
    \renewcommand\transparent[1]{}%
  }%
  \providecommand\rotatebox[2]{#2}%
  \newcommand*\fsize{\dimexpr\f@size pt\relax}%
  \newcommand*\lineheight[1]{\fontsize{\fsize}{#1\fsize}\selectfont}%
  \ifx\svgwidth\undefined%
    \setlength{\unitlength}{74.08270828bp}%
    \ifx\svgscale\undefined%
      \relax%
    \else%
      \setlength{\unitlength}{\unitlength * \real{\svgscale}}%
    \fi%
  \else%
    \setlength{\unitlength}{\svgwidth}%
  \fi%
  \global\let\svgwidth\undefined%
  \global\let\svgscale\undefined%
  \makeatother%
  \begin{picture}(1,0.99133952)%
    \lineheight{1}%
    \setlength\tabcolsep{0pt}%
    \put(0,0){\includegraphics[width=\unitlength]{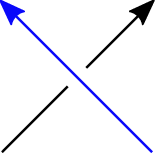}}%
	\put(0.2,0.9){\makebox(0,0)[lt]{\lineheight{1.25}\smash{\begin{tabular}[t]{l}$\gamma$\end{tabular}}}}%
    \put(0.9,0.7){\makebox(0,0)[lt]{\lineheight{1.25}\smash{\begin{tabular}[t]{l}$i$\end{tabular}}}}%
    \put(0.9,0.1){\makebox(0,0)[lt]{\lineheight{1.25}\smash{\begin{tabular}[t]{l}$i$\end{tabular}}}}%
    \put(-0.3,0.7){\makebox(0,0)[lt]{\lineheight{1.25}\smash{\begin{tabular}[t]{l}$i+1$\end{tabular}}}}%
    \put(-0.3,0.1){\makebox(0,0)[lt]{\lineheight{1.25}\smash{\begin{tabular}[t]{l}$i+1$\end{tabular}}}}%
  \end{picture}%
\endgroup%
 
\caption{By ignoring the label on $\gamma$, we see that the labeling on $D_{\gamma}$ satisfies property $(2a)$ of the definition of an Alexander system.}
\label{fig_alex_num_is_system}
\end{figure}

\begin{lemma} \label{lemma_alex_num_is_system} Suppose that $D$ is an $n$ component virtual link diagram and $D \cup \gamma$ is an $(n+1)$ component virtual link diagram satisfying condition $(1)$ of Definition \ref{defn_zh_system}. If $D \cup \gamma$ is Alexander numerable, then there is an Alexander system $(D,\gamma,\Gamma)$ of $D$.
\end{lemma}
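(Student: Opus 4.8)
The plan is to produce the required sub-numbering $\Gamma$ by simply restricting a given Alexander numbering of the whole diagram. Since $D \cup \gamma$ is Alexander numerable by hypothesis, I fix an Alexander numbering $f \colon S \to \mathbb{Z}$, where now $S$ denotes the set of \emph{all} short arcs of $D \cup \gamma$. Every such short arc lies either in $D$ or in $\gamma$, so $S = D_{\gamma} \sqcup S_{\gamma}$, where $S_{\gamma}$ is the set of short arcs contained in $\gamma$. I then set $\Gamma := f|_{D_{\gamma}}$ and verify that the triple $(D,\gamma,\Gamma)$ satisfies the two local conditions of Definition \ref{defn_zh_system}.

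The first step is to classify the classical crossings of $D \cup \gamma$ using condition $(1)$. That condition says that at any classical crossing involving $\gamma$ the over-arc lies in $\gamma$ and both under-arcs lie in $D$; in particular $\gamma$ has no classical self-crossing, since such a crossing would force an under-arc to lie in $\gamma$. Hence every classical crossing of $D \cup \gamma$ is either (i) a self-crossing of $D$, at which all four incident short arcs lie in $D_{\gamma}$, or (ii) a crossing of $\gamma$ over $D$, at which the two over-arcs lie in $S_{\gamma}$ and the two under-arcs lie in $D_{\gamma}$.

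Next I check the sub-numbering conditions crossing by crossing. At a self-crossing of type (i), all four short arcs belong to $D_{\gamma}$, so $\Gamma$ agrees with $f$ on each of them; since $f$ obeys the rule of Figure \ref{fig_alex_num_defn}, the restriction $\Gamma$ exhibits exactly the pattern of Figure \ref{fig_zh_system_defn}, right, which is condition $(2b)$. At a crossing of type (ii), the only labeled arcs in $D_{\gamma}$ are the two under-arcs; discarding the labels that $f$ assigns to the two over-arcs of $\gamma$ and retaining only the under-arc labels recovers precisely the local picture of Figure \ref{fig_zh_system_defn}, left, as illustrated in Figure \ref{fig_alex_num_is_system}. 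This is condition $(2a)$. Since every crossing is of type (i) or (ii), nothing is left unaccounted for, so $\Gamma$ is an Alexander sub-numbering of $D_{\gamma}$ and $(D,\gamma,\Gamma)$ is an Alexander system for $D$.

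The construction is essentially forced, and the argument is entirely local and combinatorial, so I expect no genuine analytic difficulty. The one point that must be handled with care — and where condition $(1)$ is indispensable — is the type (ii) crossings: the picture of Figure \ref{fig_zh_system_defn}, left presupposes that $\gamma$ is the \emph{over}-strand, so condition $(1)$ is exactly what guarantees that $\gamma$ is never an under-strand at a mixed crossing (and never self-crosses). Were $\gamma$ permitted to pass under $D$, the two arcs of $D_{\gamma}$ at that crossing would be the two over-arcs, and their $f$-labels would not reproduce the prescribed pattern $(2a)$. Thus the only obstacle is the bookkeeping check that restricting $f$ reproduces the exact local diagrams of Definition \ref{defn_zh_system}, which condition $(1)$ renders routine.
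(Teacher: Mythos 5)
Your proposal is correct and follows essentially the same route as the paper: fix an Alexander numbering $f$ of $D \cup \gamma$, let $\Gamma$ be its restriction to $D_{\gamma}$, observe that condition $(2b)$ is automatic at self-crossings of $D$, and use condition $(1)$ to see that every crossing involving $\gamma$ has the local form forcing condition $(2a)$. Your added remarks (that $\gamma$ has no classical self-crossings and that condition $(1)$ is exactly what makes the type (ii) check work) are correct elaborations of what the paper leaves implicit in Figure \ref{fig_alex_num_is_system}.
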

\begin{proof} Let $f:S \to \mathbb{Z}$ be an Alexander numbering of the short arcs of $D \cup \gamma$. Let $\Gamma:D_{\gamma} \to \mathbb{Z}$ be the restriction of $S$ to the short arcs of $D$ in $D \cup \gamma$. Since $f$ is an Alexander numbering of $D \cup \gamma$, $\Gamma$ automatically satisfies condition $(2b)$ of Definition \ref{defn_zh_system}. By condition $(1)$, every classical crossing involving $\gamma$ resembles one of the two pictures shown in Figure \ref{fig_alex_num_is_system}. It follows from Figure \ref{fig_alex_num_is_system} that the labeling inherited from $f$ satisfies condition $(2a)$. Thus, $(D,\gamma,\Gamma)$ is an Alexander system.
\end{proof}
 
\begin{corollary} \label{cor_split_iff_alex_num} Let $D$ be an $n$ component virtual link diagram. Let $D \sqcup \bigcirc$ be the $(n+1)$ component diagram obtained from $D$ by drawing an unknotted circle disjoint from $D$. Then $D$ is Alexander numerable if and only if $D \sqcup \bigcirc$ gives an Alexander system $(D,\bigcirc,\Gamma)$.
\end{corollary}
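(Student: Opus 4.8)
The plan is to exploit the fact that a disjoint unknotted circle contributes no classical crossings, so that the defining conditions of an Alexander system collapse exactly onto the ordinary Alexander numbering condition for $D$. The whole argument rests on one structural observation, which I would record first: since $\bigcirc$ is drawn disjoint from $D$ and is an unknotted circle, the diagram $D \sqcup \bigcirc$ has \emph{no} classical crossing involving the $\bigcirc$ component---there are neither self-crossings of $\bigcirc$ nor crossings of $\bigcirc$ with $D$. Three consequences follow immediately. The set $D_{\bigcirc}$ of short arcs of $D \sqcup \bigcirc$ lying in $D$ is literally the set $S$ of short arcs of $D$; every classical crossing of $D \sqcup \bigcirc$ is a self-crossing of the subdiagram $D$; and both condition $(1)$ and condition $(2a)$ of Definition \ref{defn_zh_system} hold vacuously, so that an Alexander system $(D,\bigcirc,\Gamma)$ is precisely a labeling $\Gamma \colon D_{\bigcirc} \to \mathbb{Z}$ satisfying condition $(2b)$ at every self-crossing of $D$.

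For the forward implication, I would suppose $D$ is Alexander numerable with Alexander numbering $f \colon S \to \mathbb{Z}$. Since $\bigcirc$ is crossingless, assigning its single short arc an arbitrary constant extends $f$ to an Alexander numbering of $D \sqcup \bigcirc$; thus $D \sqcup \bigcirc$ is itself Alexander numerable. Because condition $(1)$ of Definition \ref{defn_zh_system} holds vacuously for $\gamma = \bigcirc$, Lemma \ref{lemma_alex_num_is_system} then produces an Alexander system $(D,\bigcirc,\Gamma)$. Equivalently, and more directly, one can just take $\Gamma$ to be the restriction of $f$ to $D_{\bigcirc} = S$ and read off conditions $(1)$, $(2a)$, $(2b)$ from the structural observation above.

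For the reverse implication, suppose $(D,\bigcirc,\Gamma)$ is an Alexander system. By the structural observation, $\Gamma$ is defined on $D_{\bigcirc} = S$, and condition $(2b)$ forces it to obey the Alexander numbering rule of Figure \ref{fig_alex_num_defn} at every classical self-crossing of $D$, i.e.\ at every classical crossing of $D$. Hence $\Gamma$ is an Alexander numbering of $D$, so $D$ is Alexander numerable.

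The only point that requires any care---and the one I would state explicitly rather than gloss---is the bookkeeping of short arcs: because $\bigcirc$ carries no classical crossing, its sole (closed) short arc never enters the domain $D_{\bigcirc}$, so the labeling domain of the Alexander system coincides \emph{exactly} with the short arcs of $D$, with nothing left over to constrain. Once this identification $D_{\bigcirc} = S$ and the vacuity of conditions $(1)$ and $(2a)$ are in hand, both directions are immediate, and the corollary is really just a transparent specialization of Lemma \ref{lemma_alex_num_is_system} to the case of a split, crossingless added component.
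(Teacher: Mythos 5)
Your proof is correct and follows essentially the same route as the paper's: the forward direction invokes Lemma \ref{lemma_alex_num_is_system} after noting that condition $(1)$ holds vacuously, and the reverse direction reads off an Alexander numbering of $D$ from condition $(2b)$ and the absence of crossings involving $\bigcirc$. Your explicit bookkeeping of the identification $D_{\bigcirc}=S$ and the step showing that an Alexander numbering of $D$ extends to one of $D\sqcup\bigcirc$ are details the paper leaves implicit, but the argument is the same.
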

\begin{proof} The diagram $D \sqcup \bigcirc$ vacuously satisfies condition $(1)$ of Definition \ref{defn_zh_system}. If $D \sqcup \bigcirc$ is Alexander numerable, then there is an Alexander system $(D,\bigcirc,\Gamma)$ by Lemma \ref{lemma_alex_num_is_system}. Conversely, if there is an Alexander system $(D,\bigcirc,\Gamma)$, then the absence of crossings between $\bigcirc$ and $D$, together with condition $(2b)$, implies that $D$ itself is Alexander numerable.
\end{proof} 
 
Lastly, we consider a very different type of Alexander system for a virtual link diagram $D$. First take every virtual crossing of $D$ and convert it to either a positive or negative classical crossing. The result is a classical link diagram and thus there is an Alexander numbering of its short arcs. Next, convert these altered  crossings back to virtual crossings. To preserve the Alexander numbering of the short arcs, add in over-crossing arcs near each virtual crossing as pictured in Figure \ref{fig_virt_alex_system}. Connect these arcs together in an arbitrary manner to create a single component $\upsilon$. As usual, any new intersections between $\upsilon$ and $D$ or $\upsilon$ and itself are marked as virtual crossings. With this labeling $\Upsilon$, the $(n+1)$ component virtual link diagram $D \cup \upsilon$ satisfies conditions $(1)$ and $(2)$ of Definition \ref{defn_zh_system}. This  Alexander system for $D$ will be used in Section \ref{AMpoly}. It is recorded as a definition below for future reference.

\begin{definition}[Virtual Alexander system] \label{defn_virtual_alex_system} An Alexander system $(D,\upsilon,\Upsilon)$ for $D$ constructed as in the preceding paragraph will be called a \emph{virtual Alexander system for $D$}.
\end{definition}

\begin{figure}[htb]
\centering
\[
\xymatrix{
\begin{array}{c}
\includegraphics[scale=.4]{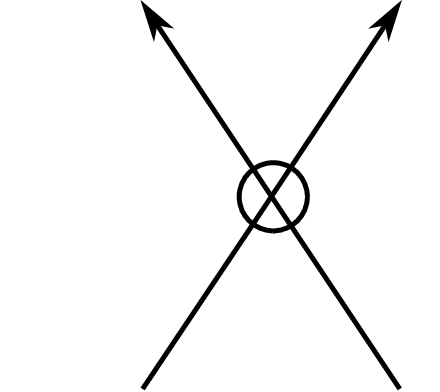}
\end{array} \ar[r] &
\begin{array}{c}
\includegraphics[scale=.4]{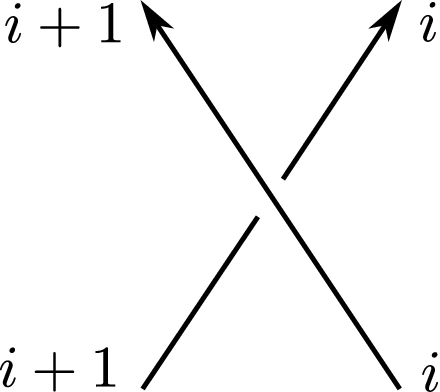}
\end{array} \ar[r] &
\begin{array}{c}
\includegraphics[scale=.4]{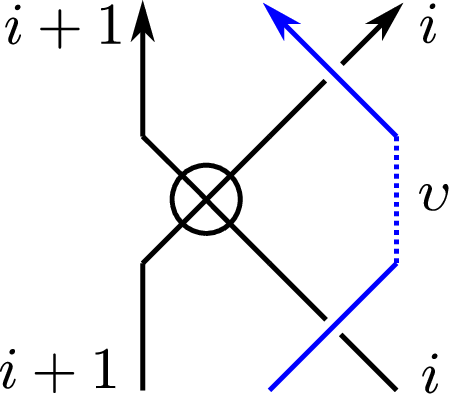}
\end{array}
}
\]
\caption{Construction of a virtual Alexander system. See Definition \ref{defn_virtual_alex_system}.}
\label{fig_virt_alex_system}
\end{figure}

\subsection{Proof of Theorem \ref{thm_main}} \label{sec_thm_main} Our proof generalizes the classification of cut-point systems due to N. Kamada \cite{kamada_cutpoint1}. We begin by establishing a collection of moves on Alexander systems that are needed in the proof.  First it is shown that reordering the over-crossing arcs of $\gamma$ in an Alexander system $(D,\gamma,\Gamma)$ gives an equivalent Alexander system for $D$ with the same labeling $\Gamma$. More precisely, suppose that $D \cup \gamma'$ is a virtual link diagram obtained from the given diagram $D \cup \gamma$ as follows. Erase all of $\gamma$ except for small over-crossing arcs at each of the classical crossings of $\gamma$ with $D$. Then reconnect the leftover pieces in an arbitrary fashion so that the result has one component $\gamma'$. All new intersections between $\gamma'$ and $D$, or $\gamma'$ and itself, are required to be transversal intersections that are disjoint from all other crossings. Mark all of these added crossings as virtual crossings. Note that in this case $D_{\gamma}=D_{\gamma'}$ and hence we can label the short arcs of $D$ in $D \cup \gamma'$ using the same function $\Gamma$.

\begin{lemma} \label{lemma_alex_and_sw} If $D \cup \gamma'$ is obtained from an Alexander system $(D, \gamma,\Gamma)$ by rearranging the over-crossing arcs of $\gamma$ with $D$ as above, then $(D,\gamma,\Gamma)$ is equivalent to $(D,\gamma',\Gamma)$. 
\end{lemma}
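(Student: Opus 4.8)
The plan is to adapt the well-definedness argument for the $\Zh$-construction given immediately after Theorem \ref{sw_eq_thrm}. The essential point is that $\gamma$ and $\gamma'$ share \emph{exactly} the same classical over-crossing arcs with $D$ — the small arcs that survive the erasing step — and differ only in the connecting arcs, all of whose crossings are virtual. Since $D_{\gamma}=D_{\gamma'}$ and these over-crossing arcs are untouched, the labeling $\Gamma$ makes sense verbatim for both, so the only thing to verify is the semi-welded equivalence $D\cup\gamma \leftrightharpoons_{sw} D\cup\gamma'$.

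First I would \emph{normalize by detour moves}. Each connecting arc of $\gamma$ runs from the exit of one over-crossing arc to the entrance of the next and, by construction, contains only virtual crossings. Hence the detour move allows each such arc to be redrawn arbitrarily while the over-crossing arcs stay fixed. Consequently, up to detour moves the diagram $D\cup\gamma$ is determined by nothing more than the cyclic order in which $\gamma$ visits the over-crossing arcs (the over-crossing arcs, together with the direction in which $\gamma$ traverses them, are retained unchanged when passing to $\gamma'$). The same applies to $\gamma'$, so after fixing base points it suffices to compare two linear orderings of the same set of over-crossing arcs.

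Next I would \emph{reorder by transpositions}. Bringing two consecutive over-crossing arcs adjacent with a detour move puts $\gamma$ in the position where it passes over two strands of $D$ in succession; condition $(1)$ of Definition \ref{defn_zh_system} guarantees that both crossings are $\gamma$-over-crossings, so an $\omega$OCC move applies and transposes their order, exactly as illustrated in Figure \ref{sw_AlexNum}. Since adjacent transpositions generate the full symmetric group, any two orderings — in particular those coming from $\gamma$ and $\gamma'$ — are connected by a finite sequence of detour and $\omega$OCC moves. Thus $D\cup\gamma$ and $D\cup\gamma'$ are semi-welded equivalent to a common normal form, hence to each other.

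The one point requiring care, which I would highlight, is that none of these moves disturbs the labeled base component: detour moves act only on the purely virtual connecting arcs of $\gamma$, and, as recorded in the caption of Figure \ref{sw_AlexNum}, the $\omega$OCC move switches the order of the classical crossings along $\gamma$ \emph{without} changing the integer labeling of $D$. Because the short arcs of $D$ and their incidences are preserved throughout and $\Gamma$ is carried unchanged on $D_{\gamma}=D_{\gamma'}$, the equivalence of diagrams upgrades to the required equivalence of Alexander systems $(D,\gamma,\Gamma) \leftrightharpoons_{sw} (D,\gamma',\Gamma)$. I expect this bookkeeping, rather than any deep idea, to be the main (and only minor) obstacle.
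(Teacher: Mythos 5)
Your proposal is correct and follows essentially the same route as the paper's proof: both reduce to the well-definedness argument for the $\Zh$-construction, using detour moves to normalize the purely virtual connecting arcs and $\omega$OCC moves to realize adjacent transpositions of the over-crossing arcs, while observing (via Figure \ref{sw_AlexNum}) that neither move alters the labeling $\Gamma$ on $D_{\gamma}=D_{\gamma'}$. No gaps to report.
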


\begin{proof} Since $\gamma'$ and $\gamma$ have the same classical crossings with $D$ and $D_{\gamma}=D_{\gamma'}$, the labeling $f:D_{\gamma}=D_{\gamma'} \to \mathbb{Z}$ also gives a labeling of $D_{\gamma'}$ satisfying $(2)$ of Definition \ref{defn_zh_system}. Hence $(D,\gamma',\Gamma)$ is an Alexander system. It remains to show that $D \cup \gamma$ and $D\cup \gamma'$ are semi-welded equivalent. First observe that detour moves and the $\omega$OCC move do not change the labeling of $D_{\gamma'}$. This is obvious for detour moves. For the $\omega$OCC move, see Figure \ref{sw_AlexNum}. Now, following the proof that the $\Zh$-construction is well defined, any desired ordering of the under-crossing arcs of $D$ along $\gamma'$ can be achieved from the given one by permuting pairs of adjacent under-crossing arcs. Virtual crossings can likewise be placed in any desired position using detour moves. Hence, $(D,\gamma,\Gamma)$ and $(D,\gamma',\Gamma')$ are equivalent.  
\end{proof}
\begin{figure}[htb]
    \[ 
    \begin{array}{c} \includegraphics[scale=.4]{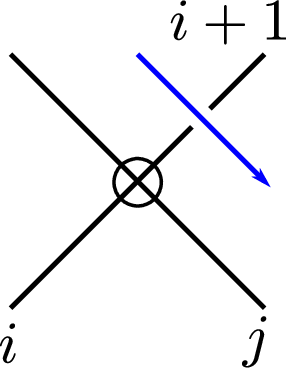}\end{array} \leftrightharpoons_{sw} \begin{array}{c} \includegraphics[scale=.4]{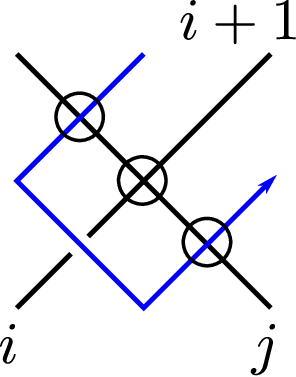}\end{array}  \quad
\begin{array}{c} \includegraphics[scale=.4]{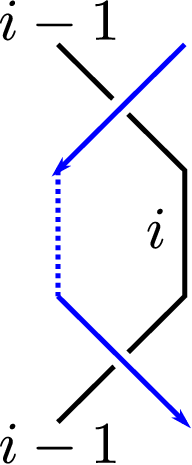}\end{array} \leftrightharpoons_{sw} \begin{array}{c} \includegraphics[scale=.4]{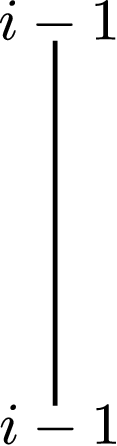}\end{array} \quad 
    \begin{array}{c} \includegraphics[scale=.4]{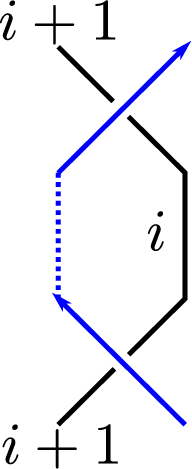}\end{array} \leftrightharpoons_{sw} \begin{array}{c} \includegraphics[scale=.4]{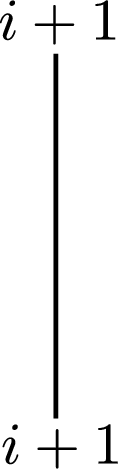}\end{array} 
    \] 
    \caption{The Alexander system 1 move (left), Alexander system 2A move (center), and Alexander system 2B move (right).} \label{fig_alex_system_moves_1_2}
\end{figure}

\begin{figure}[htb]
\[
\begin{array}{cc}
    \begin{array}{c} \includegraphics[scale=.4]{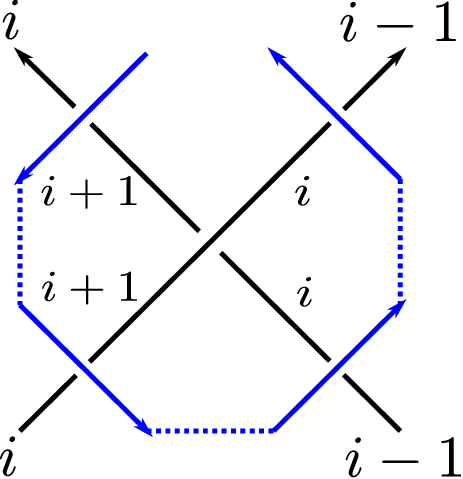}\end{array} \leftrightharpoons_{sw} \begin{array}{c} \includegraphics[scale=.4]{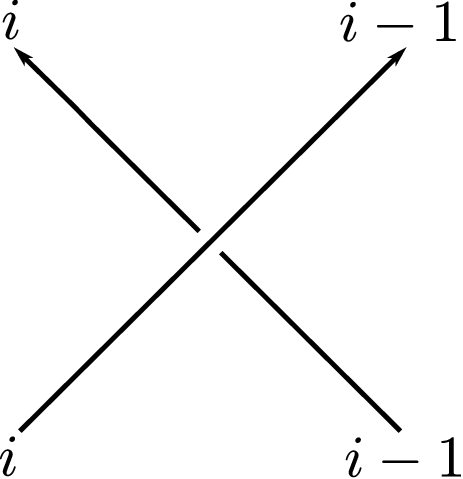}\end{array} 
    & 
    \begin{array}{c} \includegraphics[scale=.4]{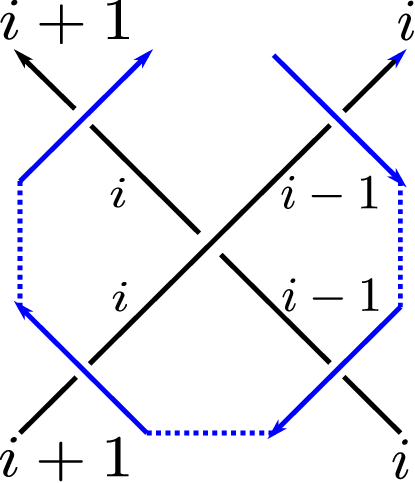}\end{array} \leftrightharpoons_{sw} \begin{array}{c} \includegraphics[scale=.4]{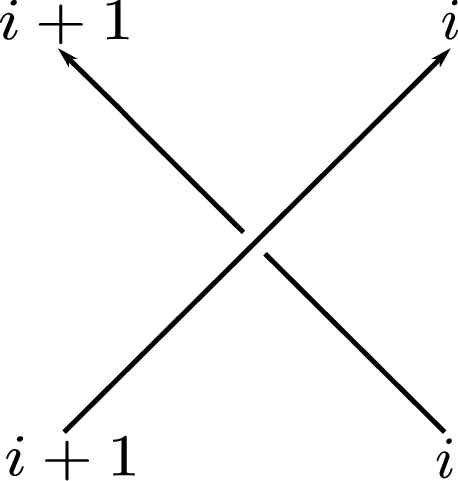}\end{array} 
\end{array}
\]
\caption{The Alexander system 3A (left) and 3B (right) moves.}
\label{fig:ocp3a_z}
\end{figure}

Now consider the moves shown in Figures \ref{fig_alex_system_moves_1_2} and \ref{fig:ocp3a_z}. These will be called \emph{Alexander system (AS) moves}. Each move transforms an Alexander system $(D,\gamma,\Gamma)$ into an equivalent Alexander system $(D,\gamma',\Gamma')$. The Alexander system 1 move on the left in Figure \ref{fig_alex_system_moves_1_2} is a detour move. This has no effect on the Alexander sub-numbering of $D$. The Alexander system 2A and 2B moves (see Figure \ref{fig_alex_system_moves_1_2} center and right) shows the effect of an $\Omega 2$ move on an Alexander sub-numbering of $D$. Note that after performing the move, there are no over-crossings of $\gamma$ and $D$ in the depicted portion of the virtual link diagram. By Lemma \ref{lemma_alex_and_sw}, only the positions of the over-crossing arcs of $\gamma$ and $D$ are of importance. Hence, it is not necessary to draw the $\gamma$ component after the $\Omega 2$ move is performed. Likewise, the loop on the left hand side of each move in Figure \ref{fig:ocp3a_z} can be pulled off using a $\Omega 3$ move and two $\Omega 2$  moves. Afterwards, there is a part of $\gamma$ that has no crossings with $D$. The moves in Figure \ref{fig:ocp3a_z} are then completed by erasing this portion of $\gamma$ and then reconnecting the arcs of $\gamma$ in any desired fashion.  

\begin{figure}[htb]
    \begin{center}
\begingroup%
  \makeatletter%
  \providecommand\color[2][]{%
    \errmessage{(Inkscape) Color is used for the text in Inkscape, but the package 'color.sty' is not loaded}%
    \renewcommand\color[2][]{}%
  }%
  \providecommand\transparent[1]{%
    \errmessage{(Inkscape) Transparency is used (non-zero) for the text in Inkscape, but the package 'transparent.sty' is not loaded}%
    \renewcommand\transparent[1]{}%
  }%
  \providecommand\rotatebox[2]{#2}%
  \newcommand*\fsize{\dimexpr\f@size pt\relax}%
  \newcommand*\lineheight[1]{\fontsize{\fsize}{#1\fsize}\selectfont}%
  \ifx\svgwidth\undefined%
    \setlength{\unitlength}{255.26893436bp}%
    \ifx\svgscale\undefined%
      \relax%
    \else%
      \setlength{\unitlength}{\unitlength * \real{\svgscale}}%
    \fi%
  \else%
    \setlength{\unitlength}{\svgwidth}%
  \fi%
  \global\let\svgwidth\undefined%
  \global\let\svgscale\undefined%
  \makeatother%
  \begin{picture}(1,0.16939042)%
    \lineheight{1}%
    \setlength\tabcolsep{0pt}%
    \put(0,0){\includegraphics[width=\unitlength]{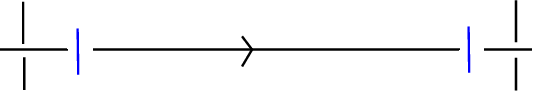}}%
    \put(0.07,0.09){\color[rgb]{0,0,0}\makebox(0,0)[lt]{\lineheight{1.25}\smash{\begin{tabular}[t]{l}$m$\end{tabular}}}}%
    \put(0.92,0.09){\color[rgb]{0,0,0}\makebox(0,0)[lt]{\lineheight{1.25}\smash{\begin{tabular}[t]{l}$n$\end{tabular}}}}%
    \put(0.22,0.09){\color[rgb]{0,0,0}\makebox(0,0)[lt]{\lineheight{1.25}\smash{\begin{tabular}[t]{l}$\cdots$\end{tabular}}}}%
    \put(0.75,0.09){\color[rgb]{0,0,0}\makebox(0,0)[lt]{\lineheight{1.25}\smash{\begin{tabular}[t]{l}$\cdots$\end{tabular}}}}%
  \end{picture}%
\endgroup%

    \caption{The arc $\alpha$ from Lemma \ref{arc_lemma}.}
     \label{arc_lemma_fig}
    \end{center}
\end{figure}

\begin{lemma} \label{arc_lemma} Suppose that $(D,\gamma,\Gamma)$ is an Alexander system. Let $\alpha$ be the short arc of $D$ between two (possibly the same) classical crossings of $D$. Let $m$ and $n$ be the integer labelings of the first and last short arcs along $\alpha$, as shown in Figure \ref{arc_lemma_fig}. Then there is an equivalent Alexander system $(D,\gamma',\Gamma')$ such that $\gamma'$ over-crosses $\alpha$ exactly $|m-n|$ times, where all crossings are positive if $m < n$ and all crossings are negative if $m > n$.
\end{lemma}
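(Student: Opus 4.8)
The plan is to induct on the number $k$ of over-crossings of $\gamma$ with $\alpha$, reducing an arbitrary configuration to the claimed monotone one by repeatedly cancelling adjacent pairs of opposite sign. Before doing so, I would record the arithmetic of the sub-numbering along $\alpha$. By condition $(2a)$ of Definition \ref{defn_zh_system} (see Figure \ref{fig_alex_num_is_system}), each over-crossing of $\gamma$ with $\alpha$ changes the integer label of $D_\gamma$ by exactly $+1$ across a positive crossing and $-1$ across a negative crossing, measured in the direction of $\alpha$. Writing $\epsilon_1,\dots,\epsilon_k\in\{\pm1\}$ for the ordered sequence of crossing signs along $\alpha$, telescoping the label changes from the first short arc (labelled $m$) to the last (labelled $n$) yields $n-m=\sum_{i=1}^{k}\epsilon_i$. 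Thus the signed count of crossings on $\alpha$ equals $n-m$ and is invariant under any modification preserving the endpoint labels; in particular the target configuration with $|m-n|$ crossings, all of sign $\operatorname{sgn}(n-m)$, is consistent with this identity and fixes the sign convention so that surviving crossings carry the sign asserted in the statement.

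Next I would establish the single cancellation step. If the signs $\epsilon_i$ are not all equal, then—since equality of every adjacent pair would force all signs to agree—there is an index $i$ with $\epsilon_i=-\epsilon_{i+1}$, that is, two over-crossings adjacent along $\alpha$ and of opposite sign with no crossing of $\alpha$ between them. Using Lemma \ref{lemma_alex_and_sw} I would reconnect $\gamma$ so that the two small over-arcs at these crossings are joined directly by a single connecting arc, and then apply an Alexander system $1$ (detour) move to push that connecting arc alongside the crossing-free segment of $\alpha$ between the two points. Because an oriented $\Omega2$ bigon always has two crossings of opposite sign, the opposite-sign hypothesis guarantees that $\gamma$ now over-crosses $\alpha$ in an embedded bigon whose interior meets no other strand, so an Alexander system $2$A or $2$B move (the $\Omega2$ moves of Figure \ref{fig_alex_system_moves_1_2}) removes both crossings. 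By the discussion preceding the lemma, each AS move carries an Alexander system to an equivalent one; hence the result is an Alexander system equivalent to $(D,\gamma,\Gamma)$ with two fewer crossings on $\alpha$, and with the first and last short arcs of $\alpha$ (hence the labels $m$ and $n$) untouched.

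Finally I would iterate. Each cancellation strictly decreases the number of crossings of $\gamma$ with $\alpha$ while preserving both the endpoint labels and, by the identity of the first paragraph, the signed total $n-m$. After finitely many steps the remaining crossings must all have the same sign; since their signed count is still $n-m$, there are exactly $|m-n|$ of them, all positive when $m<n$ and all negative when $m>n$. This produces the desired equivalent Alexander system $(D,\gamma',\Gamma')$.

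The main obstacle is the geometric justification of the cancellation step: verifying that after reconnecting via Lemma \ref{lemma_alex_and_sw} and performing a detour move, an opposite-sign adjacent pair genuinely bounds an \emph{empty} embedded bigon, so that a legitimate $\Omega2$ (AS2) move applies and no third strand obstructs it. The crossing-free segment of $\alpha$ and the freedom to draw the connecting arc of $\gamma$ with only virtual crossings are what make this work, but these points—together with the careful bookkeeping of the sign convention in the first paragraph—are where the argument must be checked most carefully.
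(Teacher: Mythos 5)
Your proposal is correct and follows essentially the same route as the paper: reduce to the monotone case by repeatedly locating an adjacent opposite-sign pair along $\alpha$, using Lemma \ref{lemma_alex_and_sw} to make that pair consecutive along $\gamma$, and cancelling it with an Alexander system 2A or 2B move. Your telescoping identity $n-m=\sum_i\epsilon_i$ is a slightly tidier way to extract the final count than the paper's direct reading of the monotone case, but the substance is identical.
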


\begin{proof} First suppose that all of the over-crossing arcs of $\gamma$ are oriented in the same direction along $\alpha$. Then either the labeling starts at $m$ and decreases by 1 to $n$ while traveling the length of $\alpha$ or it increases by 1 to $n$. This implies that all of the crossing of $\gamma$ with $\alpha$ are negative in the first case and positive in the second. Hence, the number of crossings is $|m-n|$, $m<n$ when the common crossing sign is positive, and $m>n$ when the common crossing sign is negative.
\newline
\newline
For the general case, suppose that not all of the crossings of $\gamma$ and $\alpha$ are oriented in the same direction. Then starting from the left, we can always find a first pair of arcs that have opposite orientations. By Lemma \ref{lemma_alex_and_sw}, there is an equivalent Alexander system such that this pair of arcs occurs consecutively in their ordering along $\gamma$. Using either an Alexander system 2A or 2B move, this pair of crossings can be removed. This process can be repeated until all of the over-crossings of $\gamma$ with $\alpha$ point in the same direction. The result then follows from the first case.  
\end{proof}

\begin{lemma} \label{main_lem} Any two Alexander systems for the same virtual link diagram are equivalent.
\end{lemma}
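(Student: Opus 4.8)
The plan is to reduce every Alexander system to a canonical form using Lemma \ref{arc_lemma}, and then to show that this canonical form is determined by $D$ alone, so that any two such forms can be matched up to semi-welded equivalence. Concretely, I would first apply Lemma \ref{arc_lemma} to each short arc $\alpha$ of $D$ lying between two consecutive classical self-crossings of $D$ (the $\gamma$-crossings do not count as endpoints here). This replaces a given Alexander system $(D,\gamma,\Gamma)$ by an equivalent one in which $\gamma$ over-crosses each such arc $\alpha$ exactly $|m_\alpha-n_\alpha|$ times, all crossings positive when $m_\alpha<n_\alpha$ and all negative when $m_\alpha>n_\alpha$, where $m_\alpha,n_\alpha$ are the labels of the first and last short arcs along $\alpha$. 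I will call such a system \emph{reduced}. A component of $D$ carrying no self-crossings must receive a net of zero $\gamma$-over-crossings, since the labels must close up around it; these cancel in pairs by the Alexander system $2$A/$2$B moves, so such a component carries no $\gamma$-crossings in reduced form.

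The key step is to show that the reduced data is intrinsic to $D$. Given two reduced Alexander systems $(D,\gamma,\Gamma)$ and $(D,\gamma',\Gamma')$ for the \emph{same} diagram $D$, I claim that $n_\alpha-m_\alpha=n'_\alpha-m'_\alpha$ for every arc $\alpha$. The point is that condition $(2b)$ of Definition \ref{defn_zh_system} is a fixed local rule: at each self-crossing of $D$ it pins down the four incident labels of any Alexander sub-numbering as a set of prescribed differences $\ell_i-\ell_j=c_{ij}$, where the constants $c_{ij}$ depend only on the sign and orientation data of $D$. Since $\Gamma$ and $\Gamma'$ satisfy the same relations, their difference $\Gamma-\Gamma'$ satisfies the homogeneous relations $(\Gamma-\Gamma')_i=(\Gamma-\Gamma')_j$ at each self-crossing, i.e.\ it is constant on the four corner short arcs meeting there. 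As these corner arcs are connected to one another through the self-crossings of each component of $D$, the function $\Gamma-\Gamma'$ is locally constant; in particular $m_\alpha-m'_\alpha=n_\alpha-n'_\alpha$, whence $n_\alpha-m_\alpha=n'_\alpha-m'_\alpha$. Thus the two reduced systems carry exactly the same number of $\gamma$-over-crossings, with the same signs, on each arc of $D$.

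Finally I would match the two reduced diagrams up to semi-welded equivalence. By Lemma \ref{lemma_alex_and_sw} the semi-welded class of $D\cup\gamma$ depends only on the collection of over-crossing arcs of $\gamma$ along $D$ — their locations and signs — and not on how these arcs are connected into the single component $\gamma$. Each individual over-crossing arc can be slid freely within the interior of its arc $\alpha$ by Alexander system $1$ (detour) moves, routing the connecting arcs of $\gamma$ around any obstruction by detour moves, while the Alexander system $3$A/$3$B moves let me move over-crossings through the configurations adjacent to the self-crossings of $D$. Since the reduced systems agree arc by arc in the number and sign of over-crossings, I can bring $\gamma$ and $\gamma'$ into coincidence, giving $D\cup\gamma\leftrightharpoons_{sw}D\cup\gamma'$. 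Because equivalence of Alexander systems ignores the labeling function entirely, this shows the two original systems are equivalent.

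The main obstacle I anticipate is precisely the middle step: establishing rigorously that $(2b)$ forces any two Alexander sub-numberings of the fixed diagram $D$ to differ by a locally constant function, so that the signed $\gamma$-crossing count on each arc is an invariant of $D$ rather than of the chosen system. Once that is in hand, the remainder is bookkeeping with the established AS moves and Lemma \ref{lemma_alex_and_sw}. Some care is also needed for components of $D$ without self-crossings and, more generally, to confirm that the arc-by-arc reductions performed by Lemma \ref{arc_lemma} remain compatible across the self-crossings, which is where the $3$A/$3$B moves earn their place.
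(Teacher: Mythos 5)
There is a genuine gap at the step you yourself identify as the crux: the claim that any two Alexander sub-numberings of the same diagram $D$ differ by a locally constant function, so that the signed count of $\gamma$-over-crossings on each arc is intrinsic to $D$. This is false. Condition $(2b)$ of Definition \ref{defn_zh_system} forces $\Gamma-\Gamma'$ to be constant only on the four short arcs incident to a \emph{single} self-crossing $x$; it does not propagate that constant from one self-crossing to another, because along an arc $\alpha$ joining $x$ to $y$ the labels of $\Gamma$ and of $\Gamma'$ jump by $\pm 1$ at the over-crossings of $\gamma$ and of $\gamma'$ respectively, and these jumps need not match. Concretely, starting from any Alexander system and performing a single Alexander system 3A move at a self-crossing $x$ (Figure \ref{fig:ocp3a_z}) yields an equivalent Alexander system in which the integer $m^x$ determining the four labels at $x$ has changed by $1$ while the labels at every other self-crossing are unchanged; after re-reducing with Lemma \ref{arc_lemma}, the arcs incident to $x$ carry a different number of over-crossings than before. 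More generally, the integers $m^x$ can be prescribed completely independently at the self-crossings of $D$ (the connecting arcs absorb any discrepancy via extra $\gamma$-over-crossings), so your ``reduced form'' is not canonical and two reduced systems need not agree arc by arc.

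The paper's proof supplies exactly the missing normalization, and in the opposite order from yours: \emph{before} invoking Lemma \ref{arc_lemma}, it applies 3A/3B moves to one of the two systems until $m_0^x=m_1^x$ at every self-crossing $x$. Only once the crossing labels have been matched do the quantities $|m_\alpha-n_\alpha|$ and the common signs coincide for the two systems, after which your final step (Lemma \ref{arc_lemma} followed by Lemma \ref{lemma_alex_and_sw}) goes through as you describe. In other words, the 3A/3B moves are not auxiliary bookkeeping for sliding over-crossings past self-crossings of $D$; they are the essential mechanism for changing the non-canonical labels $m^x$ so that the two systems become comparable. Your treatment of components of $D$ without self-crossings, and the final reordering via Lemma \ref{lemma_alex_and_sw}, are fine.
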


\begin{proof} Let $(D,\gamma_0,\Gamma_0)$, $(D,\gamma_1,\Gamma_1)$ be two Alexander systems for a virtual link diagram $D$. The Alexander sub-numberings $\Gamma_0,\Gamma_1$ provide instructions on how to achieve the required semi-welded equivalence. Let $x$ be a classical self-crossing of $D$. The labels in $D_{\gamma_i}$ incident to $x$ are determined by a single integer $m_i^x$. If $x$ is rotated so that it appears as in Figure \ref{fig_alex_num_defn} , take $m_i^x$ to be the label in the lower right-hand corner. If $m_0^x=m_1^x$, there is nothing to do. If $m_0^x<m_1^x$, perform Alexander system 3A moves on $(D,\gamma_0,\Gamma_0)$ until $m_0^x=m_1^x$. If $m_0^x>m_1^x$, perform Alexander system 3B moves on $(D,\gamma_0,\Gamma_0)$ until equality is reached. Do this for every classical self-crossing of $D$. Hence we may assume that the two Alexander systems have the same integer labeling at every classical self-crossing of $D$.  Now apply Lemma \ref{arc_lemma} to each short arc $\alpha$ of $D$ for each of the two Alexander systems. After this reduction, the over-crossings of $\gamma_0$ with $D$ must be of the same quantity and sign with the over-crossings of $\gamma_1$ with $D$ along each $\alpha$. Finally, applying Lemma \ref{lemma_alex_and_sw} to $\gamma_0$, we can reorder the over-crossings of $\gamma_0$ with $D$ to match the ordering of the over-crossings of $\gamma_1$ with $D$. It follows that $(D,\gamma_0,\Gamma_0)$, $(D,\gamma_1,\Gamma_1)$ are equivalent Alexander systems for $D$.
\end{proof}

\begin{proof}[Proof of Theorem \ref{thm_main}] Let $(D_{0},\gamma_0,\Gamma_0)$, $(D_{1},\gamma_1,\Gamma_1)$ be Alexander systems with $D_0 \leftrightharpoons D_1$. Lemmas \ref{main_lem} and \ref{sw_eq_thrm} imply that $D_0 \cup \gamma_0 \leftrightharpoons_{sw} \Zh^{\text{op}}(D_0) \leftrightharpoons_{sw} \Zh^{\text{op}}(D_1) \leftrightharpoons_{sw} D_1 \cup \gamma_1$.
\end{proof}

\subsection{Alexander systems $\&$ relative Seifert surfaces} \label{sec_geom_origin} The results of the previous sections can now be used to give a geometric derivation of the $\Zh^{\text{op}}$-construction. By Lemma \ref{lemma_zh_is_alexander}, every virtual link diagram $D$ has an Alexander system $(D,\omega^{\text{op}},\Omega)$ that is Alexander numerable. Since $D \cup \omega^{\text{op}}$ is Alexander numerable, it bounds a Seifert surface $F$ in its Carter surface $\Sigma$ (see e.g. \cite{acpaper}, Theorem 6.1). The boundary of $F$ in $\Sigma \times \mathbb{I}$ consists of a link $\mathscr{L}$ corresponding to the virtual link diagram $D$ and a knot $\mathscr{C}$ corresponding to $\omega^{\text{op}}$. As the only classical crossings of $\omega^{\text{op}}$ and $D$ are over-crossings, $\mathscr{C}$ lies above $\mathscr{L}$ in $\Sigma \times \mathbb{I}$. Furthermore, $\omega^{\text{op}}$ has no classical self-crossings. It follows that $\mathscr{C}$ can be drawn as a simple closed curve in $\Sigma \times \{1\}$. Now, since Alexander systems characterize the $\Zh^{\text{op}}$-construction, our aim is to show that Alexander systems are generated by surfaces $F$ that are cobounded by $\mathscr{L}$ and some simple closed curves in $\Sigma \times \{1\}$. Such surfaces always exist, as is shown below.

\begin{lemma} \label{lemma_cobound_exist} Let $\Sigma$ be the Carter surface of a virtual link diagram $D$ and let $\mathscr{D}$ be the corresponding link diagram on $\Sigma$. Let $\mathscr{L} \subset \Sigma \times \mathbb{I}$ be a link having diagram $\mathscr{D}$. Then there is a compact oriented surface $F \subset \Sigma \times \mathbb{I}$ cobounded by $\mathscr{L}$ and a collection of disjoint simple closed curves $\mathscr{C}_F\subset \Sigma \times \{1\}$.
\end{lemma}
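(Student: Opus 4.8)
The plan is to run Seifert's algorithm on the diagram $\mathscr{D}$, but to modify its final step. In the classical case one caps each Seifert circle with a disk; here a Seifert circle may be homologically essential on $\Sigma$ and bound no disk, which is exactly the obstruction to $\mathscr{L}$ spanning an honest Seifert surface in $\Sigma \times \mathbb{I}$ (this succeeds precisely when $D$ is almost classical, i.e. when $\mathscr{L}$ is null-homologous, as recalled in Section \ref{sec_review_ac}). The idea is to absorb this obstruction using the extra boundary freedom at the top: instead of disks, I would cap each Seifert circle with a vertical annulus running up to $\Sigma \times \{1\}$. The homology class $[\mathscr{L}] = \sum_i [C_i] \in H_1(\Sigma;\mathbb{Z})$ is then carried off to the top level by the curves $\mathscr{C}_F$, consistent with the vanishing of $H_1(\Sigma \times \mathbb{I}, \Sigma \times \{1\};\mathbb{Z})$.

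First I would fix heights so that $\mathscr{L}$ and its diagram $\mathscr{D}$ lie in the lower slab $\Sigma \times [0,\tfrac12]$, with the over/under data of $\mathscr{D}$ recorded by small perturbations below level $\tfrac12$. Smoothing every classical crossing of $\mathscr{D}$ compatibly with the orientation produces a disjoint family of coherently oriented embedded Seifert circles $C_1,\dots,C_k \subset \Sigma$, together with a twisted half-band at each crossing. Let $F_0$ be the union of these bands with annular collars of the $C_i$; equivalently, $F_0$ is the usual Seifert spanning surface with the interiors of the (abstract) Seifert disks deleted, so it is defined without reference to whether the $C_i$ bound on $\Sigma$. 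Then $F_0$ is a compact oriented surface embedded in $\Sigma \times [0,\tfrac12]$ with $\partial F_0 = \mathscr{L} \sqcup (C_1 \sqcup \cdots \sqcup C_k)$. Since the $C_i$ are disjoint boundary components of $F_0$, a small ambient isotopy in the slab moves them to lie in the level $\Sigma \times \{\tfrac12\}$.

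Next, for each Seifert circle I would attach the vertical annulus $A_i = C_i \times [\tfrac12,1]$, oriented compatibly with $F_0$, and set $\mathscr{C}_F = \bigsqcup_i \big(C_i \times \{1\}\big) \subset \Sigma \times \{1\}$. The crucial simplification over the classical disk-stacking is that the Seifert circles are automatically pairwise disjoint on $\Sigma$, so the cylinders $A_i$ are pairwise disjoint; moreover they lie in the upper slab $\Sigma \times [\tfrac12,1]$ and therefore meet $F_0$ exactly along the circles $C_i \times \{\tfrac12\}$, disjointly from $\mathscr{L}$ and the bands. Hence $F = F_0 \cup \bigcup_i A_i$ is an embedded compact oriented surface with $\partial F = \mathscr{L} \sqcup \mathscr{C}_F$, and $\mathscr{C}_F$ is a collection of disjoint simple closed curves in $\Sigma \times \{1\}$, as required.

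I expect the only genuine point requiring care to be the embeddedness and orientation bookkeeping for $F$; but the height separation into a lower slab (link and bands) and an upper slab (caps), combined with the disjointness of the Seifert circles, makes both of these routine. In particular, the essential/nested-circle complications of the classical algorithm do not arise, because the vertical cylinders over disjoint curves are automatically embedded and disjoint. I would close by noting that this matches the homological expectation: $F$ realizes $\mathscr{L} \sim \mathscr{C}_F$ in $H_1(\Sigma \times \mathbb{I};\mathbb{Z})$, and the curves $\mathscr{C}_F$ represent the class $[\mathscr{L}]$ that obstructs $\mathscr{L}$ from bounding in the interior.
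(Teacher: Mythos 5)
Your construction is correct, but it takes a genuinely different route from the paper. The paper disposes of this lemma in two lines of algebraic topology: since $H_1(\Sigma \times \mathbb{I}, \Sigma \times \{1\};\mathbb{Z}) \cong \{0\}$, the link $\mathscr{L}$ is a relative $2$-boundary, and the existence of the embedded surface follows from the standard fact that relative $2$-boundaries in a $3$-manifold are represented by embedded surfaces meeting the designated boundary piece in curves. You instead build $F$ explicitly: run Seifert's algorithm on $\mathscr{D}$, keep only the half-twisted bands and annular collars of the Seifert circles (discarding the capping disks, which may not exist on $\Sigma$), and cap the leftover boundary circles with vertical cylinders up to $\Sigma \times \{1\}$. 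Your embeddedness argument is sound --- the Seifert circles are disjoint embedded curves on $\Sigma$, so the collars and vertical cylinders are automatically disjoint, and the bands are the standard local crossing picture --- and your $F$ has exactly the required boundary. What your approach buys is constructiveness: it avoids the (unproven-in-the-paper) step converting a homological relative boundary into an embedded surface, and it identifies $\mathscr{C}_F$ concretely as the Seifert circles pushed to the top level. This in fact anticipates the paper's own later, more delicate construction (the theorem realizing $\Zh^{\text{op}}(D)$ by an explicit relative Seifert surface built from oriented smoothings), so your argument essentially proves the existence lemma and sets up that realization in one stroke; the cost is length, where the paper's homological argument is immediate. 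One cosmetic caveat: your parenthetical claim that capping with disks ``succeeds precisely when $D$ is almost classical'' is only motivational and slightly imprecise (null-homology of $\mathscr{L}$ does not imply each individual Seifert circle bounds a disk), but nothing in the actual construction depends on it.
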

\begin{proof} Note that homology relative to $\Sigma \times \{1\}$ satisfies $H_1(\Sigma \times \mathbb{I}, \Sigma \times \{1\};\mathbb{Z}) \cong \{0\}$. Hence, any link $\mathscr{L} \subset \Sigma \times \mathbb{I}$ is a relative $2$-boundary and the result follows.
\end{proof}

\begin{definition}[Relative Seifert surface] Let $D$ be a virtual link diagram and let $\Sigma$, $\mathscr{D}$, $\mathscr{L}$ be as in Lemma \ref{lemma_cobound_exist}. A \emph{relative Seifert surface for $D$} is a compact oriented surface $F \subset \Sigma \times \mathbb{I}$ with $\partial F=\mathscr{L} \sqcup \mathscr{C}_F$, where $\mathscr{C}_F$ is a collection of disjoint oriented closed curves in $\Sigma \times \{1\}$. 
\end{definition}

Every relative Seifert surface $F$ for $D$ determines an Alexander system $(D,\gamma_F,\Gamma_F)$, as we now show. The link $\mathscr{L} \sqcup \mathscr{C}_F \subset \Sigma \times \mathbb{I}$ projects to a link diagram $\mathscr{D} \cup \mathscr{D}_F$ on $\Sigma \times \{0\}$ (perhaps after  deforming $\mathscr{C}_F$ slightly to ensure transversal intersections). Observe that the only crossings in $\mathscr{D} \cup \mathscr{D}_F$ involving the components of $\mathscr{D}_F$ are over-crossings by $\mathscr{D}_F$. Now, consider the intersection pairing for the subspaces $\mathscr{D}$ and $\mathscr{D}_F$ of $\Sigma$ (see e.g. Dold \cite{dold}, VIII.13.18):
\[
\bullet:H_1(\mathscr{D};\mathbb{Z}) \times H_1(\mathscr{D}_F;\mathbb{Z}) \to H_{1+1-2}(\mathscr{D} \cap \mathscr{D}_F).
\]
Clearly, $H_{0}(\mathscr{D} \cap \mathscr{D}_F;\mathbb{Z}) \cong \mathbb{Z}^{|\mathscr{D} \cap \mathscr{D}_F|}$. In other words, each over-crossing of $\mathscr{D}_F$ with $\mathscr{D}$ contributes a summand of $\mathbb{Z}$. Since $\mathscr{D}_F$ always over-crosses $\mathscr{D}$, the sign of the crossing is determined by the intersection pairing. To see this, write $\mathscr{D}_F$ as a union of its components $\mathscr{D}_1,\mathscr{D}_2,\ldots,\mathscr{D}_m$. Let $\eta_i=[\mathscr{D}_i] \in H_1(\mathscr{D}_F;\mathbb{Z})$  and let $\eta=[\mathscr{D}]\in H_1(\mathscr{D} ;\mathbb{Z})$. Then:
\[
\eta \bullet \left(\sum_{i=1}^m \eta_i \right) \in H_0(\mathscr{D} \cap \mathscr{D}_F;\mathbb{Z})\cong \mathbb{Z}^{|\mathscr{D} \cap \mathscr{D}_F|}
\]
has a $\pm 1$ in each summand according to the local intersect number. See Figure \ref{fig_local_int_num}. It is readily checked that the sign of the crossing agrees with the local intersection number of $\eta$ and $\eta_i$.
\newline

\begin{figure}[htb]
\begin{tabular}{cccc}
\begin{tabular}{c}
\def\svgwidth{1in}
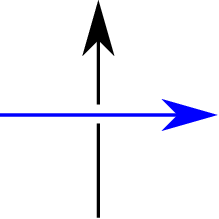 \vspace{.65cm} \end{tabular}  & \begin{tabular}{c}
\def\svgwidth{1in}
\begingroup%
  \makeatletter%
  \providecommand\color[2][]{%
    \errmessage{(Inkscape) Color is used for the text in Inkscape, but the package 'color.sty' is not loaded}%
    \renewcommand\color[2][]{}%
  }%
  \providecommand\transparent[1]{%
    \errmessage{(Inkscape) Transparency is used (non-zero) for the text in Inkscape, but the package 'transparent.sty' is not loaded}%
    \renewcommand\transparent[1]{}%
  }%
  \providecommand\rotatebox[2]{#2}%
  \newcommand*\fsize{\dimexpr\f@size pt\relax}%
  \newcommand*\lineheight[1]{\fontsize{\fsize}{#1\fsize}\selectfont}%
  \ifx\svgwidth\undefined%
    \setlength{\unitlength}{149.65973225bp}%
    \ifx\svgscale\undefined%
      \relax%
    \else%
      \setlength{\unitlength}{\unitlength * \real{\svgscale}}%
    \fi%
  \else%
    \setlength{\unitlength}{\svgwidth}%
  \fi%
  \global\let\svgwidth\undefined%
  \global\let\svgscale\undefined%
  \makeatother%
  \begin{picture}(1,0.69692045)%
    \lineheight{1}%
    \setlength\tabcolsep{0pt}%
    \put(0,0){\includegraphics[width=\unitlength]{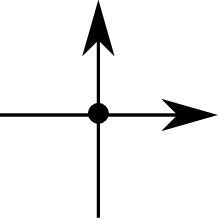}}%
\put(0.73778465,0.33409111){\color[rgb]{0,0,0}\makebox(0,0)[lt]{\lineheight{40.54999924}\smash{\begin{tabular}[t]{l}$\eta_i$\end{tabular}}}}%
    \put(0.55,0.8){\color[rgb]{0,0,0}\makebox(0,0)[lt]{\lineheight{40.54999924}\smash{\begin{tabular}[t]{l}$\eta$\end{tabular}}}}%
  \end{picture}%
\endgroup%
 \end{tabular} &  \begin{tabular}{c}\def\svgwidth{1in}
\begingroup%
  \makeatletter%
  \providecommand\color[2][]{%
    \errmessage{(Inkscape) Color is used for the text in Inkscape, but the package 'color.sty' is not loaded}%
    \renewcommand\color[2][]{}%
  }%
  \providecommand\transparent[1]{%
    \errmessage{(Inkscape) Transparency is used (non-zero) for the text in Inkscape, but the package 'transparent.sty' is not loaded}%
    \renewcommand\transparent[1]{}%
  }%
  \providecommand\rotatebox[2]{#2}%
  \newcommand*\fsize{\dimexpr\f@size pt\relax}%
  \newcommand*\lineheight[1]{\fontsize{\fsize}{#1\fsize}\selectfont}%
  \ifx\svgwidth\undefined%
    \setlength{\unitlength}{149.65973225bp}%
    \ifx\svgscale\undefined%
      \relax%
    \else%
      \setlength{\unitlength}{\unitlength * \real{\svgscale}}%
    \fi%
  \else%
    \setlength{\unitlength}{\svgwidth}%
  \fi%
  \global\let\svgwidth\undefined%
  \global\let\svgscale\undefined%
  \makeatother%
  \begin{picture}(1,0.70038785)%
    \lineheight{1}%
    \setlength\tabcolsep{0pt}%
    \put(0,0){\includegraphics[width=\unitlength]{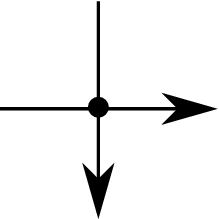}}%
    \put(0.73778465,0.33409111){\color[rgb]{0,0,0}\makebox(0,0)[lt]{\lineheight{40.54999924}\smash{\begin{tabular}[t]{l}$\eta_i$\end{tabular}}}}%
    \put(0.5,0.8){\color[rgb]{0,0,0}\makebox(0,0)[lt]{\lineheight{40.54999924}\smash{\begin{tabular}[t]{l}$\eta$\end{tabular}}}}%
  \end{picture}%
\endgroup%
 \end{tabular} & \begin{tabular}{c}\def\svgwidth{1in}
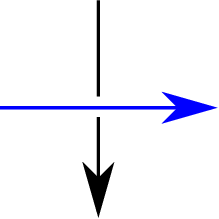 \vspace{.65cm} \end{tabular}  \\
 $\oplus$ crossing & $\text{sign}=1$ & $\text{sign}=-1$ & $\ominus$ crossing
\end{tabular}
\caption{Classical crossings and local intersection numbers.} \label{fig_local_int_num}
\end{figure}

An Alexander system $(D,\gamma_F,\Gamma_F)$ is now constructed as follows. For the $(n+1)$ component virtual link diagram $D \cup \gamma_F$, note that every transversal intersection of $\mathscr{D}$ and $\mathscr{D}_F$ corresponds to a point $x$ on the virtual link diagram $D$. Draw a small over-crossing arc at $x$ so that the sign of the crossing matches the local intersection number of $\mathscr{D}$ and $\mathscr{D}_F$. As in the $\Zh^{\text{op}}$-construction, these are connected in an arbitrary fashion to obtain a single component $\gamma_F$. For the labeling function $\Gamma_F:D_{\gamma_F} \to \mathbb{Z}$, we use the fact that $\mathscr{L} \cup \mathscr{C}_F$ is almost classical. Since $\mathscr{L} \sqcup \mathscr{C}_F$ bounds the Seifert surface $F$, the diagram $\mathscr{D} \cup \mathscr{D}_F$ is Alexander numerable. As in the proof of Lemma \ref{lemma_alex_num_is_system}, the restriction of this labeling to the short arcs of $D_{\gamma_F}$ is an Alexander sub-numbering. This proves the following theorem.

\begin{theorem} \label{thm_rss} Any relative Seifert surface $F$ of $D$ gives is an Alexander system $(D,\gamma_F,\Gamma_F)$. 
\end{theorem}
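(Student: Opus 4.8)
The plan is to verify directly that the triple $(D,\gamma_F,\Gamma_F)$ assembled from $F$ in the preceding paragraphs satisfies the two defining conditions of an Alexander system (Definition \ref{defn_zh_system}). The construction itself is already in hand: projecting $\mathscr{L} \sqcup \mathscr{C}_F$ to $\Sigma \times \{0\}$ yields a diagram $\mathscr{D} \cup \mathscr{D}_F$, and one replaces each point of $\mathscr{D} \cap \mathscr{D}_F$ by a small $\gamma_F$-over-crossing arc carrying the appropriate sign, connecting these arcs arbitrarily into a single component. What remains is verification.

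First I would establish condition $(1)$. Because $\mathscr{C}_F$ is a collection of simple closed curves lying entirely in $\Sigma \times \{1\}$, it sits strictly above $\mathscr{L}$ in the $\mathbb{I}$-coordinate. Hence every transversal intersection of $\mathscr{D}_F$ with $\mathscr{D}$ projects to a crossing in which $\mathscr{D}_F$ passes \emph{over} $\mathscr{D}$; after deforming $\mathscr{C}_F$ slightly to achieve transversality, there are no self-crossings of $\gamma_F$ and no under-crossings of $\gamma_F$ by $D$. This is exactly condition $(1)$. I would then confirm that the sign assigned to each arc agrees with the local intersection number of $\eta=[\mathscr{D}]$ with $\eta_i=[\mathscr{D}_i]$, which is the content of the intersection-pairing computation and the case check of Figure \ref{fig_local_int_num}.

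Next, for condition $(2)$, the key input is that $F$ is a genuine relative Seifert surface: since $\partial F = \mathscr{L} \sqcup \mathscr{C}_F$, the link $\mathscr{L} \cup \mathscr{C}_F$ is homologically trivial and therefore almost classical, so the diagram $\mathscr{D} \cup \mathscr{D}_F$ admits an Alexander numbering $f$. Restricting $f$ to the short arcs of $D$ inside $D \cup \gamma_F$ produces the labeling $\Gamma_F := f|_{D_{\gamma_F}}$. I would then invoke Lemma \ref{lemma_alex_num_is_system}, whose hypotheses (condition $(1)$ together with Alexander numerability of $D \cup \gamma_F$) have just been verified, to conclude that $\Gamma_F$ is an Alexander sub-numbering, i.e. that it satisfies $(2a)$ at crossings of $\gamma_F$ with $D$ and $(2b)$ at self-crossings of $D$.

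The main obstacle I anticipate is purely the positional and orientation bookkeeping in the first step: one must be sure that placing $\mathscr{C}_F$ in the top level $\Sigma \times \{1\}$ genuinely forces every crossing to be a $\gamma_F$-over-crossing, and that the sign prescribed by the local intersection number coincides with the diagrammatic crossing sign recorded in Figure \ref{fig_local_int_num}. Once condition $(1)$ is secured, condition $(2)$ follows essentially as a corollary of the almost classical property of $\partial F$ combined with Lemma \ref{lemma_alex_num_is_system}, so no further delicate argument is required.
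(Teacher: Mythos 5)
Your proposal is correct and follows essentially the same route as the paper: the paper likewise verifies condition $(1)$ from the fact that $\mathscr{C}_F$ lies in $\Sigma \times \{1\}$ (so $\mathscr{D}_F$ only over-crosses $\mathscr{D}$, with signs given by the local intersection numbers of Figure \ref{fig_local_int_num}), and obtains condition $(2)$ by noting that $\mathscr{D} \cup \mathscr{D}_F$ is Alexander numerable because $\mathscr{L} \sqcup \mathscr{C}_F$ bounds $F$, then restricting that numbering to $D_{\gamma_F}$ exactly as in Lemma \ref{lemma_alex_num_is_system}.
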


An additional feature of the above construction is that it explains the need for the semi-welded equivalence relation. Each over-crossing arc of $\gamma_F$ with $D$ represents a summand of the intersection homology class of $\eta \bullet \sum_i \eta_i \in H_0(\mathscr{D} \cap \mathscr{D}_F;\mathbb{Z})$. Since $H_0(\mathscr{D} \cap \mathscr{D}_F;\mathbb{Z})$ is abelian, the order in which these summands are added together to make the full intersection homology class $\eta \bullet \sum_i \eta_i$ is irrelevant. The $\gamma_F$ component in $D \cup \gamma_F$ can therefore be interpreted as a diagrammatic representation of the full intersection homology class $\eta \bullet \sum_i \eta_i$.  The $\omega$OCC relation simply encodes the fact that $H_0(\mathscr{D} \cap \mathscr{D}_F;\mathbb{Z})$ is abelian. Next we derive the $\Zh^{\text{op}}$-construction from relative Seifert surfaces.

\begin{theorem} For every virtual link diagram $D$, there is a relative Seifert surface $S$ such that the Alexander system $(D,\gamma_S,\Gamma_S)$ is equivalent to the Alexander system $(D,\omega^{\text{op}},\Omega)$ obtained from the $\Zh^{\text{op}}$-construction.
\end{theorem}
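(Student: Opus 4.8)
The plan is to build the surface $S$ explicitly from the Alexander-numerable model of $\Zh^{\text{op}}(D)$ furnished by Lemma~\ref{lemma_zh_is_alexander}(2), and then to check that the Alexander system it generates via Theorem~\ref{thm_rss} is precisely $(D,\omega^{\text{op}},\Omega)$. First I would draw $\omega^{\text{op}}$ as in Lemma~\ref{lemma_zh_is_alexander}(2), so that $D\cup\omega^{\text{op}}$ is Alexander numerable, and pass to the associated link $\mathscr{L}\sqcup\mathscr{C}\subset\Sigma\times\mathbb{I}$, where $\mathscr{L}$ is the lift of $D$ and $\mathscr{C}$ the lift of $\omega^{\text{op}}$. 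Since $D\cup\omega^{\text{op}}$ is Alexander numerable, $\mathscr{L}\sqcup\mathscr{C}$ is homologically trivial and hence bounds a Seifert surface $S$, as recalled in the discussion following Definition~\ref{acl_def}.

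Next I would arrange $S$ to be a relative Seifert surface for $D$. Because every classical crossing of $\omega^{\text{op}}$ with $D$ is an over-crossing by $\omega^{\text{op}}$ and $\omega^{\text{op}}$ has no classical self-crossings, the component $\mathscr{C}$ lies entirely above $\mathscr{L}$ and can be isotoped into $\Sigma\times\{1\}$ as a single simple closed curve, exactly as observed in the discussion preceding Lemma~\ref{lemma_cobound_exist}. Once $\mathscr{C}$ sits at level $1$, the handles of the Carter surface carrying only the $\omega^{\text{op}}$-over-$D$ crossings may be destabilized, so that $\mathscr{L}$ projects to the diagram $\mathscr{D}$ of $D$ over the Carter surface of $D$ while $\mathscr{C}_S:=\mathscr{C}$ lies in $\Sigma\times\{1\}$. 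Thus $\partial S=\mathscr{L}\sqcup\mathscr{C}_S$ has the form required in the definition of a relative Seifert surface.

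It then remains to identify the induced Alexander system. Projecting $\mathscr{L}\sqcup\mathscr{C}_S$ back to $\Sigma\times\{0\}$, the diagram $\mathscr{D}_S$ of $\mathscr{C}_S$ is the original $\omega^{\text{op}}$, so the only crossings involving $\mathscr{D}_S$ are the over-crossings of $\omega^{\text{op}}$ with $D$. By the intersection-pairing computation carried out just before Theorem~\ref{thm_rss}, the sign of each such crossing equals the local intersection number of $\mathscr{D}$ with $\mathscr{D}_S$, which is exactly the crossing sign recorded in the $\Zh^{\text{op}}$-construction. Hence the over-crossing arcs produced by Theorem~\ref{thm_rss} are precisely those of $\omega^{\text{op}}$; connecting them as in $\omega^{\text{op}}$ (which by Lemma~\ref{lemma_alex_and_sw} is immaterial up to equivalence) gives $\gamma_S=\omega^{\text{op}}$, and restricting the Alexander numbering of $\mathscr{D}\cup\mathscr{D}_S$ to $D_{\omega^{\text{op}}}$ recovers $\Omega$. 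Therefore $(D,\gamma_S,\Gamma_S)=(D,\omega^{\text{op}},\Omega)$ on the nose; in any case their equivalence is also immediate from Lemma~\ref{main_lem} once both are seen to be Alexander systems for $D$.

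I expect the crux to be the handle bookkeeping in the second step: making the isotopy of $\mathscr{C}$ into $\Sigma\times\{1\}$ precise and confirming that destabilizing the auxiliary handles genuinely returns the Carter surface of $D$ (rather than that of $D\cup\omega^{\text{op}}$), so that $S$ literally satisfies the definition of a relative Seifert surface for $D$. The comparison of crossing data in the final step is then a routine local check, and Lemma~\ref{main_lem} stands ready as a shortcut should the explicit identification of $\gamma_S$ with $\omega^{\text{op}}$ prove delicate.
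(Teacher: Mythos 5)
Your strategy differs from the paper's: you start from the Alexander-numerable drawing of $D\cup\omega^{\text{op}}$ and try to convert its Seifert surface into a relative Seifert surface for $D$, whereas the paper builds $S$ directly over the Carter surface $\Sigma$ of $D$ by performing the oriented smoothings of $\mathscr{D}$, redrawing the smoothed arcs so that they always lie to the right of $\mathscr{D}$ (hence close up into the Seifert circles without creating any new intersections with $\mathscr{D}$), and taking $S$ to be the two-sided surface traced out between $\mathscr{L}$ and these push-offs placed in $\Sigma\times\{1\}$; the over-crossing arcs this produces are, by inspection of Figure \ref{zh_surface}, exactly those of $\Zh^{\text{op}}(D)$.

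The gap in your route is the step you yourself flag as the crux, and it is not merely bookkeeping. The Seifert surface furnished by Alexander numerability of $D\cup\omega^{\text{op}}$ lives in $\Sigma'\times\mathbb{I}$, where $\Sigma'$ is the Carter surface of $D\cup\omega^{\text{op}}$, while the definition of a relative Seifert surface for $D$ requires the ambient surface to be the Carter surface of $D$. These are genuinely different surfaces: the connecting arcs of $\omega^{\text{op}}$ have virtual crossings with $D$ in the plane, and on $\Sigma'$ these are realized by extra handles over which $\mathscr{C}$ must travel precisely in order to avoid intersecting $\mathscr{D}$. Those handles cannot be destabilized while $\mathscr{C}$ (and the portion of $S$ adjacent to it) runs over them, and $\mathscr{C}$ cannot be isotoped off them onto the Carter surface of $D$ without either creating new intersections with $\mathscr{D}$ or re-routing it entirely; finding a routing of the over-arcs that closes up on $\Sigma$ with no extra intersections is exactly the content of the paper's push-off construction, so your argument ends up presupposing what it must prove. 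Note that the literal statement does follow cheaply from Lemma \ref{lemma_cobound_exist}, Theorem \ref{thm_rss}, and Lemma \ref{main_lem} applied to an arbitrary relative Seifert surface, but that shortcut does not exhibit a concrete $S$ realizing $\Zh^{\text{op}}$ (the actual point of the theorem), and your fallback via Lemma \ref{main_lem} still presupposes that your particular $S$ is a relative Seifert surface for $D$, which is the unproved step.
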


\begin{proof} Let $\Sigma$ be the Carter surface of $D$ and $\mathscr{D}$ the diagram of $D$ on $\Sigma$. Perform the oriented smoothing at each crossing of $\mathscr{D}$ on $\Sigma$ (see Figure \ref{zh_surface}). As usual, we would like to connect the smoothed crossings together to make the Seifert circles of $\mathscr{D}$. To do so, however, it may be necessary to introduce new intersections with the arcs of $\mathscr{D}$. Therefore, we must first redraw the smoothed crossings of $\mathscr{D}$ in such a way that no additional crossings are needed. This is shown in the two middle pictures in Figure \ref{zh_surface}. Note that as you enter or exit the crossing of $\mathscr{D}$, the arcs of the smoothed crossing now always lie to the right of an arc of $\mathscr{D}$. Hence the smoothed arcs for adjacent crossings of $\mathscr{D}$ can be connected together without introducing any additional crossings with $\mathscr{D}$. At each crossing, the figure shows a piece of the desired two-sided surface $S$. Extending these pieces of the surface along the remaining arcs of $\mathscr{D}$ gives an oriented surface $S$ with boundary $\mathscr{L} \sqcup \mathscr{C}_S$.  The fact that $D \cup \gamma_S$ coincides with  $\Zh^{\text{op}}(D)$, up to semi-welded equivalence, follows from Figure \ref{zh_surface}, right.
\end{proof}

\begin{figure}[htb]
\[
\xymatrix{
\begin{array}{c} \def\svgwidth{1.15in}
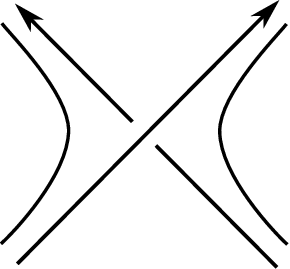 \end{array} \ar[r] & \begin{array}{c} \def\svgwidth{1in}
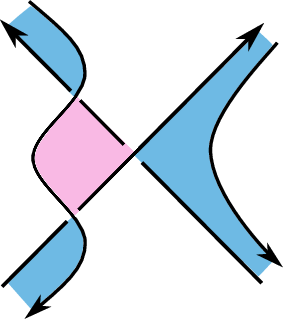 \end{array} \ar[r] & \begin{array}{c} \def\svgwidth{1in}
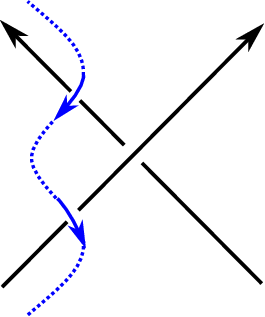 \end{array}\\
\begin{array}{c} \def\svgwidth{1.15in}
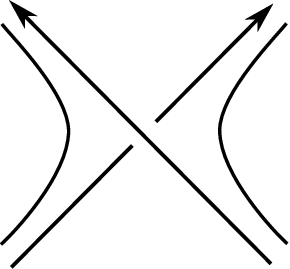 \end{array} \ar[r] & \begin{array}{c} \def\svgwidth{1in}
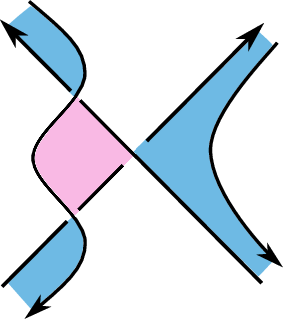 \end{array} \ar[r] & \begin{array}{c} \def\svgwidth{1in}
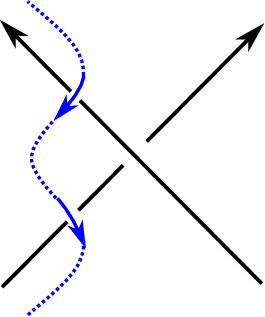 \end{array} 
}
\]
\caption{Realizing $\Zh^{\text{op}}(D)$ with a relative Seifert surface.}
\label{zh_surface}
\end{figure}

As an application of the above, we give a geometric proof of Theorem \ref{thm_splits}. A corollary of this is that $\Zh^{\text{op}}(D)$ splits as an Alexander system $D \sqcup \bigcirc$ if and only if $D$ is almost classical.

\begin{theorem} \label{thm_zh_splits} Let $D$ be a diagram of an almost classical link. Then $\Zh^{\text{op}}(D)$ is semi-welded equivalent to the split link $D \sqcup \omega^{\text{op}}$, where $\omega^{\text{op}}$ is an unknot.
\end{theorem}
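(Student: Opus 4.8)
The plan is to produce a relative Seifert surface for $D$ whose associated Alexander system is the \emph{split} one $D \sqcup \bigcirc$, and then to identify it with $\Zh^{\text{op}}(D)$ via Theorem \ref{thm_main}. First I would pass to a nicer representative: since $D$ is a diagram of an almost classical link, Definition \ref{acl_def} supplies an Alexander numerable diagram $D'$ with $D' \leftrightharpoons D$. Let $\Sigma'$ be the Carter surface of $D'$ and $\mathscr{L}' \subset \Sigma' \times \mathbb{I}$ the corresponding link; because $D'$ is Alexander numerable, $\mathscr{L}'$ is homologically trivial in $H_1(\Sigma' \times \mathbb{I};\mathbb{Z})$ and hence bounds a genuine Seifert surface $F_0$ with $\partial F_0 = \mathscr{L}'$.

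To fit the relative framework I would append a single, trivially bounded circle. Choose a small disk $\Delta \subset \Sigma' \times \{1\}$ disjoint from $F_0$ and from $\mathscr{L}'$, and set $F = F_0 \sqcup \Delta$, so that $\partial F = \mathscr{L}' \sqcup \bigcirc$ with $\bigcirc = \partial \Delta$ a single unknotted circle in $\Sigma' \times \{1\}$. Then $F$ is a relative Seifert surface for $D'$, and by Theorem \ref{thm_rss} it determines an Alexander system $(D',\gamma_F,\Gamma_F)$. The crucial point is that the projection $\mathscr{D}_F$ of $\bigcirc$ is disjoint from the projection $\mathscr{D}'$ of $\mathscr{L}'$; since the over-crossing arcs of $\gamma_F$ are placed exactly at the points of $\mathscr{D}' \cap \mathscr{D}_F$, the component $\gamma_F$ has no classical crossings with $D'$ whatsoever. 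Clearing the residual virtual crossings by detour moves gives $D' \cup \gamma_F \leftrightharpoons_{sw} D' \sqcup \bigcirc$, so that $(D',\bigcirc,\Gamma_F)$ is a split Alexander system (compare Corollary \ref{cor_split_iff_alex_num}).

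Finally I would apply Theorem \ref{thm_main} with $D_0 = D$ and $D_1 = D'$: since $D' \leftrightharpoons D$, the Alexander system $(D',\bigcirc,\Gamma_F)$ for $D'$ satisfies $D' \sqcup \bigcirc \leftrightharpoons_{sw} \Zh^{\text{op}}(D)$. The virtual Reidemeister moves realizing $D' \leftrightharpoons D$ fix the disjoint unknot and are in particular semi-welded moves, so $D' \sqcup \bigcirc \leftrightharpoons_{sw} D \sqcup \bigcirc$. Composing these equivalences yields $\Zh^{\text{op}}(D) \leftrightharpoons_{sw} D \sqcup \bigcirc = D \sqcup \omega^{\text{op}}$, with $\omega^{\text{op}}$ an unknot, which is the claim.

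The step I expect to be the main obstacle is the geometric bookkeeping that makes $\gamma_F$ split off: one needs the extra boundary curve to be choosable disjoint from $\mathscr{D}'$ (equivalently, that its intersection class $\eta \bullet \sum_i \eta_i$ vanishes), and this is exactly where almost classicality — the existence of a genuine Seifert surface for $\mathscr{L}'$ — enters. A related subtlety worth flagging is that almost classicality is a property of the link \emph{type}, not of the given diagram $D$, which itself need not be Alexander numerable; this is why the construction must be carried out on the representative $D'$ and transported back to $D$ only at the very end through Theorem \ref{thm_main}.
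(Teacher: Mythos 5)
Your proof is correct and follows essentially the same route as the paper's: reduce to an Alexander numerable representative, use the genuine Seifert surface it bounds to produce a relative Seifert surface whose extra boundary circle misses the diagram, and conclude via the classification of Alexander systems (Theorem \ref{thm_main}, resting on Lemma \ref{main_lem}). The only difference is cosmetic --- you adjoin a disjoint disk with boundary in $\Sigma' \times \{1\}$, whereas the paper tubes a small disk from the interior of the Seifert surface up to $\Sigma \times \{1\}$; in either version one should note that the small circle must be positioned so that its \emph{projection} to $\Sigma'$ misses $\mathscr{D}'$ (disjointness from $\mathscr{L}'$ and $F_0$ in $\Sigma'\times\mathbb{I}$ alone does not give this), which is easily arranged since the complement of $\mathscr{D}'$ in $\Sigma'$ is open and nonempty.
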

\begin{proof} If $D_0,D_1$ are equivalent diagrams, then $\Zh^{\text{op}}(D_0),\Zh^{\text{op}}(D_1)$ are semi-welded equivalent. Hence we may assume that the given diagram $D$ is Alexander numerable. Let $\Sigma$ be the Carter surface of $D$ and $\mathscr{D}$ the corresponding diagram of $D$ on $\Sigma$. Any link $\mathscr{L} \subset \Sigma \times \mathbb{I}$ with diagram $\mathscr{D}$ bounds a Seifert surface $F$ in $\Sigma \times \mathbb{I}$. Let $\tau:[0,1] \to \Sigma \times \mathbb{I}$ be a smooth path such that $\tau(0)$ is in the interior of $F$, $\tau(1) \in \Sigma \times \{1\}$, and $\tau((0,1)) \cap F=\varnothing$. Let $N$ be a sufficiently small closed tubular neighborhood of the image of $\tau$ so that $N$ intersects $F$ exactly in a disc $E_0 \subset \text{int}(F)$ that is centered at $\tau(0)$ and $N$ intersects $\Sigma \times \{1\}$ only in a disc $E_1$ that is centered at $\tau(1)$ . Let $T=\partial N \smallsetminus (\text{int}(E_0) \cup \text{int}(E_1))$, so that $T$ is an annulus. Define $S=(F \smallsetminus \text{int}(E_0))  \cup T$. Then $S$ intersects $\Sigma \times \{1\}$ in the circle $\partial E_1$ that bounds the disc $E_1$ in $\Sigma \times \{1\}$. Let $(D,\gamma_S,\Gamma_S)$ be the Alexander system for $D$ corresponding to $S$ as in Theorem \ref{thm_rss}. Since $\partial E_1$ can be contracted to as small a circle in $\Sigma$ as desired, it follows that $\gamma_S$ has no classical or virtual crossings with $D$. Hence, $D \sqcup \gamma_S$ splits as desired. 
\end{proof}

\begin{corollary} A virtual link diagram $D$ is almost classical if and only if $\Zh^{\text{op}}(D)$ is semi-welded equivalent to a split diagram $D' \sqcup \bigcirc$ where $(D',\bigcirc,\Gamma')$ is an Alexander system.
\end{corollary}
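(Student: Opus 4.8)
The plan is to prove the two implications separately, using Theorem~\ref{thm_zh_splits} and Corollary~\ref{cor_split_iff_alex_num}, supplemented by a single new ingredient: the observation that \emph{forgetting the $\omega$ component descends to a well-defined map from semi-welded equivalence classes to virtual equivalence classes}. That is, if $L \leftrightharpoons_{sw} L'$, then the base diagrams obtained by deleting the distinguished $(n+1)$-st component satisfy $(L\smallsetminus\omega) \leftrightharpoons (L'\smallsetminus\omega)$.

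For the forward implication, I would suppose that $D$ is almost classical. By Definition~\ref{acl_def}, the virtual type of $D$ has an Alexander numerable representative $D'$, so $D \leftrightharpoons D'$ and hence $\Zh^{\text{op}}(D) \leftrightharpoons_{sw} \Zh^{\text{op}}(D')$. Applying Theorem~\ref{thm_zh_splits} to $D'$ gives $\Zh^{\text{op}}(D') \leftrightharpoons_{sw} D' \sqcup \bigcirc$, where $\bigcirc$ is an unknot split from $D'$. Since $D'$ is Alexander numerable, Corollary~\ref{cor_split_iff_alex_num} shows that $(D',\bigcirc,\Gamma')$ is an Alexander system, and concatenating the equivalences produces the desired split Alexander system. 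The point to emphasize is that $D$ must be replaced by the numerable representative $D'$: an almost classical diagram need not itself be Alexander numerable, so $D \sqcup \bigcirc$ need not be an Alexander system for $D$ even though Theorem~\ref{thm_zh_splits} guarantees it splits.

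For the reverse implication, I would suppose $\Zh^{\text{op}}(D) \leftrightharpoons_{sw} D' \sqcup \bigcirc$ with $(D',\bigcirc,\Gamma')$ a split Alexander system. Because $\bigcirc$ has no crossings with $D'$, the converse half of Corollary~\ref{cor_split_iff_alex_num} shows that $D'$ is Alexander numerable, hence almost classical. It then remains only to identify the virtual type of $D$ with that of $D'$, and for this I would apply the forgetful observation to $\Zh^{\text{op}}(D)=D\cup\omega^{\text{op}} \leftrightharpoons_{sw} D'\sqcup\bigcirc$: deleting $\omega^{\text{op}}$ on the left and $\bigcirc$ on the right yields $D \leftrightharpoons D'$. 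Since almost classicality is a property of the virtual type, $D$ is almost classical.

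The main obstacle I anticipate is justifying the forgetful observation, which is the one ingredient not already packaged by the cited results. This reduces to checking, move by move, that each generator of semi-welded equivalence restricts to a virtual Reidemeister equivalence of the base once $\omega$ is deleted: an $\omega$OCC move disappears entirely, since it only rearranges crossings in which $\omega$ is the over-strand; a detour move along a base arc remains a detour move, while one along $\omega$ simply vanishes; and a Reidemeister move either survives intact when it does not involve $\omega$, or reduces to a planar isotopy or a detour move on the base when it does (in the $\Omega 3$ case one checks that the two base strands' mutual crossing is either preserved or slid across the now-empty region formerly occupied by $\omega$). This case analysis is routine bookkeeping, but it is the step where care is genuinely required.
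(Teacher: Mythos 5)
Your proof is correct and follows essentially the same route as the paper: the forward direction via Theorem~\ref{thm_zh_splits}, and the reverse direction via Corollary~\ref{cor_split_iff_alex_num} together with the observation that deleting the $\omega$ component sends semi-welded equivalences to virtual equivalences of the base (which the paper asserts in one sentence by noting that the $\omega$OCC move only involves $\omega$ as the over-strand). Your extra care in the forward direction --- passing to an Alexander numerable representative $D'$ so that $(D',\bigcirc,\Gamma')$ is genuinely an Alexander system --- fills in a detail the paper leaves implicit, but it is not a different argument.
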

\begin{proof} If $D$ is almost classical, then $\Zh^{\text{op}}(D)$ splits by Theorem \ref{thm_zh_splits}. Conversely, suppose $\Zh^{\text{op}}(D) \leftrightharpoons_{sw} D' \sqcup \bigcirc$. Since $(D',\bigcirc,\Gamma')$ is an Alexander system, Corollary \ref{cor_split_iff_alex_num}, implies that $D'$ is an Alexander numerable diagram. Since the $\omega$OCC move can only be applied when the over-crossing arc is in the $\omega^{\text{op}}$ component, it follows that $D \leftrightharpoons D'$ as virtual link diagrams. Hence, $D$ is almost classical. 
\end{proof}

\section{Realizing the Dye-Kauffman-Miyazawa polynomial with \texorpdfstring{$\Zh$}{Lg}}\label{AMpoly}

\subsection{Review of DKM-polynomial}  Here we will use the formulation due to Dye-Kauffman \cite{dye2009virtual}, which gives an equivalent invariant to the Miyazawa polynomial \cite{miyazawa2008}. The DKM-polynomial is defined as a state sum, similar to the Kauffman bracket, but over the ring $\mathbb{Z}[A^{\pm1}, K_{1},K_{2}, \ldots]$. By an \emph{arrow state} of $D$, we mean the collection of planar curves obtained from $D$ by successively applying the following rule to the classical crossings of $D$ until no classical crossings remain:

\begin{align*} 
   \left \langle \left \langle 
   \begin{array}{c}\includegraphics[height=.045\textheight]{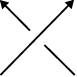}  \end{array} \right \rangle \right \rangle & =  A \left \langle \left \langle  \begin{array}{c}\includegraphics[height=.045\textheight]{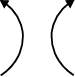} \end{array} \right\rangle \right\rangle + A^{-1} \left \langle \left \langle \begin{array}{c}\includegraphics[height=.045\textheight]{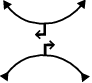} \end{array}  \right \rangle \right \rangle \label{asr_pos}, \\
   \left \langle \left\langle \begin{array}{c} \includegraphics[height=.045\textheight]{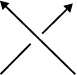} \end{array}  \right \rangle \right\rangle &=  A^{-1} \left \langle \left\langle \begin{array}{c}\includegraphics[height=.045\textheight]{smooth_oriented.eps} \end{array} \right \rangle \right\rangle + A \left \langle \left\langle\begin{array}{c} \includegraphics[height=.045\textheight]{smooth_disoriented.eps} \end{array} \right \rangle \right \rangle 
\end{align*}

Note that in addition to the usual state curves of a virtual knot diagram, an arrow state involving at least one $B$ smoothing contains some added symbols $\Rsh$. Furthermore, the curves between the endpoints of these symbols are oriented in the manner shown above. The added symbols $\Rsh$ are called \emph{poles}. The evaluation $\langle S \rangle$ for a arrow state $S$ is now determined as follows. First, we reduce the poles $\Rsh$ along all the state curves in $S$. If two neighboring poles are on the same side of the component on which they sit (or equivalently, they point in the same direction), cancel the two poles and adjust the alternating orientation. See Figure \ref{arrow_reduction}. This rule is continually applied until no additional reductions are possible. For a loop $C$ in a state $S$, set $a(C) \in \mathbb{N} \cup \{0\}$ to be half of the number of poles on $C$ and set $K_0=1$. Define the evaluation $[S ]$ of the arrow state $S$ to be:
\[
[S]=\prod_{C \in S}K_{a(C)}
\]
\begin{definition}[DKM-polynomial] Let $D$ be an oriented virtual link diagram. For an arrow state $S$ of $D$, let $\alpha(S)$ be the number of $A$-smoothings, $\beta(S)$ the number of $B$-smoothings, $|S|$ is the number of loops in the state. Let $w(D)$ denote the writhe of $D$. Then the \emph{DKM-polynomial} $V_{DKM}(D)$ is the virtual isotopy invariant of $D$ determined by the following normalized state sum:
\[
 \langle\langle D \rangle\rangle = \sum_{S} A^{\alpha(S) - \beta(S)} (-A^2-A^{-2})^{|S|-1}[ S ], \quad V_{DKM}(D) = (-A)^{-3\cdot w(D)} \langle\langle D \rangle\rangle
\]
For a proof that this defines a virtual isotopy invariant, the reader is referred to \cite{dye2009virtual}. Example calculations and a tabulation can be found in Bhandari-Dye-Kauffman \cite{bdk}.
\end{definition}

\begin{figure}[htb]
\def\svgwidth{3in}
\begingroup%
  \makeatletter%
  \providecommand\color[2][]{%
    \errmessage{(Inkscape) Color is used for the text in Inkscape, but the package 'color.sty' is not loaded}%
    \renewcommand\color[2][]{}%
  }%
  \providecommand\transparent[1]{%
    \errmessage{(Inkscape) Transparency is used (non-zero) for the text in Inkscape, but the package 'transparent.sty' is not loaded}%
    \renewcommand\transparent[1]{}%
  }%
  \providecommand\rotatebox[2]{#2}%
  \newcommand*\fsize{\dimexpr\f@size pt\relax}%
  \newcommand*\lineheight[1]{\fontsize{\fsize}{#1\fsize}\selectfont}%
  \ifx\svgwidth\undefined%
    \setlength{\unitlength}{151.50602806bp}%
    \ifx\svgscale\undefined%
      \relax%
    \else%
      \setlength{\unitlength}{\unitlength * \real{\svgscale}}%
    \fi%
  \else%
    \setlength{\unitlength}{\svgwidth}%
  \fi%
  \global\let\svgwidth\undefined%
  \global\let\svgscale\undefined%
  \makeatother%
  \begin{picture}(1,0.1031069)%
    \lineheight{1}%
    \setlength\tabcolsep{0pt}%
    \put(0,0){\includegraphics[width=\unitlength]{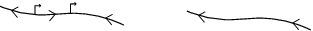}}%
    \put(0.45656702,0.06153754){\color[rgb]{0,0,0}\makebox(0,0)[lt]{\lineheight{1.25}\smash{\begin{tabular}[t]{l}$\rightleftharpoons$\end{tabular}}}}%
  \end{picture}%
\endgroup%
 
\normalsize
\caption{The rule for canceling poles.}
\label{arrow_reduction}
\end{figure}

\begin{example} Let $D$ be the right-handed virtual trefoil shown on the left in Figure \ref{vtref_states}. The right side in Figure \ref{vtref_states} shows the four arrow states.  Then $\langle \langle D \rangle \rangle, V_{DKM}(D)$ are given by:
\[
\langle \langle D \rangle\rangle=A^{2}+(1-A^{-4})K_{1}, \quad  V_{DKM}(D)=(-A)^{-6} (A^{2}+(1-A^{-4})K_{1}).
\]
\end{example}

 \begin{figure}[htb]
    \centering
    \begin{tabular}{c|cccc}
     \includegraphics[scale=.7]{tref.eps}&
     \def\svgwidth{1.1in} 
\begingroup%
  \makeatletter%
  \providecommand\color[2][]{%
    \errmessage{(Inkscape) Color is used for the text in Inkscape, but the package 'color.sty' is not loaded}%
    \renewcommand\color[2][]{}%
  }%
  \providecommand\transparent[1]{%
    \errmessage{(Inkscape) Transparency is used (non-zero) for the text in Inkscape, but the package 'transparent.sty' is not loaded}%
    \renewcommand\transparent[1]{}%
  }%
  \providecommand\rotatebox[2]{#2}%
  \newcommand*\fsize{\dimexpr\f@size pt\relax}%
  \newcommand*\lineheight[1]{\fontsize{\fsize}{#1\fsize}\selectfont}%
  \ifx\svgwidth\undefined%
    \setlength{\unitlength}{100.06739044bp}%
    \ifx\svgscale\undefined%
      \relax%
    \else%
      \setlength{\unitlength}{\unitlength * \real{\svgscale}}%
    \fi%
  \else%
    \setlength{\unitlength}{\svgwidth}%
  \fi%
  \global\let\svgwidth\undefined%
  \global\let\svgscale\undefined%
  \makeatother%
  \begin{picture}(1,1.11155544)%
    \lineheight{1}%
    \setlength\tabcolsep{0pt}%
    \put(0,0){\includegraphics[width=\unitlength]{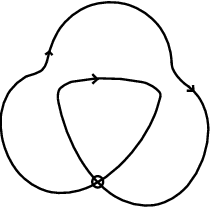}}%
    \put(0.32,-0.1){\color[rgb]{0,0,0}\makebox(0,0)[lt]{\lineheight{1.25}\smash{\begin{tabular}[t]{l}$A^{2}$\end{tabular}}}}%
  \end{picture}%
\endgroup%
   &   \def\svgwidth{1.1in} 
\begingroup%
  \makeatletter%
  \providecommand\color[2][]{%
    \errmessage{(Inkscape) Color is used for the text in Inkscape, but the package 'color.sty' is not loaded}%
    \renewcommand\color[2][]{}%
  }%
  \providecommand\transparent[1]{%
    \errmessage{(Inkscape) Transparency is used (non-zero) for the text in Inkscape, but the package 'transparent.sty' is not loaded}%
    \renewcommand\transparent[1]{}%
  }%
  \providecommand\rotatebox[2]{#2}%
  \newcommand*\fsize{\dimexpr\f@size pt\relax}%
  \newcommand*\lineheight[1]{\fontsize{\fsize}{#1\fsize}\selectfont}%
  \ifx\svgwidth\undefined%
    \setlength{\unitlength}{100.06739044bp}%
    \ifx\svgscale\undefined%
      \relax%
    \else%
      \setlength{\unitlength}{\unitlength * \real{\svgscale}}%
    \fi%
  \else%
    \setlength{\unitlength}{\svgwidth}%
  \fi%
  \global\let\svgwidth\undefined%
  \global\let\svgscale\undefined%
  \makeatother%
  \begin{picture}(1,1.11155544)%
    \lineheight{1}%
    \setlength\tabcolsep{0pt}%
    \put(0,0){\includegraphics[width=\unitlength]{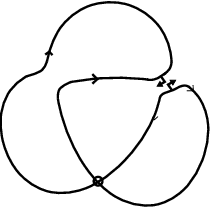}}%
    \put(0.32,-0.10){\color[rgb]{0,0,0}\makebox(0,0)[lt]{\lineheight{1.25}\smash{\begin{tabular}[t]{l}$K_{1}$\end{tabular}}}}%
  \end{picture}%
\endgroup%
 
     & \vspace{20pt}
        \def\svgwidth{1.1in}   
\begingroup%
  \makeatletter%
  \providecommand\color[2][]{%
    \errmessage{(Inkscape) Color is used for the text in Inkscape, but the package 'color.sty' is not loaded}%
    \renewcommand\color[2][]{}%
  }%
  \providecommand\transparent[1]{%
    \errmessage{(Inkscape) Transparency is used (non-zero) for the text in Inkscape, but the package 'transparent.sty' is not loaded}%
    \renewcommand\transparent[1]{}%
  }%
  \providecommand\rotatebox[2]{#2}%
  \newcommand*\fsize{\dimexpr\f@size pt\relax}%
  \newcommand*\lineheight[1]{\fontsize{\fsize}{#1\fsize}\selectfont}%
  \ifx\svgwidth\undefined%
    \setlength{\unitlength}{100.06739044bp}%
    \ifx\svgscale\undefined%
      \relax%
    \else%
      \setlength{\unitlength}{\unitlength * \real{\svgscale}}%
    \fi%
  \else%
    \setlength{\unitlength}{\svgwidth}%
  \fi%
  \global\let\svgwidth\undefined%
  \global\let\svgscale\undefined%
  \makeatother%
  \begin{picture}(1,1.11155544)%
    \lineheight{1}%
    \setlength\tabcolsep{0pt}%
    \put(0,0){\includegraphics[width=\unitlength]{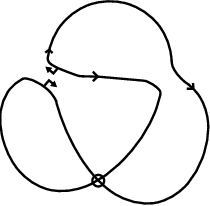}}%
    \put(0.32,-0.1){\color[rgb]{0,0,0}\makebox(0,0)[lt]{\lineheight{1.25}\smash{\begin{tabular}[t]{l}$K_{1}$\end{tabular}}}}%
  \end{picture}%
\endgroup%
&   \def\svgwidth{1.1in} 
\begingroup%
  \makeatletter%
  \providecommand\color[2][]{%
    \errmessage{(Inkscape) Color is used for the text in Inkscape, but the package 'color.sty' is not loaded}%
    \renewcommand\color[2][]{}%
  }%
  \providecommand\transparent[1]{%
    \errmessage{(Inkscape) Transparency is used (non-zero) for the text in Inkscape, but the package 'transparent.sty' is not loaded}%
    \renewcommand\transparent[1]{}%
  }%
  \providecommand\rotatebox[2]{#2}%
  \newcommand*\fsize{\dimexpr\f@size pt\relax}%
  \newcommand*\lineheight[1]{\fontsize{\fsize}{#1\fsize}\selectfont}%
  \ifx\svgwidth\undefined%
    \setlength{\unitlength}{123.87756157bp}%
    \ifx\svgscale\undefined%
      \relax%
    \else%
      \setlength{\unitlength}{\unitlength * \real{\svgscale}}%
    \fi%
  \else%
    \setlength{\unitlength}{\svgwidth}%
  \fi%
  \global\let\svgwidth\undefined%
  \global\let\svgscale\undefined%
  \makeatother%
  \begin{picture}(1,0.87712078)%
    \lineheight{1}%
    \setlength\tabcolsep{0pt}%
    \put(0,0){\includegraphics[width=\unitlength]{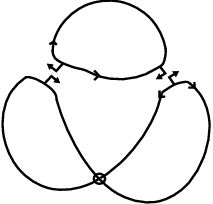}}%
    \put(0.32,-0.1){\color[rgb]{0,0,0}\makebox(0,0)[lt]{\lineheight{1.25}\smash{\begin{tabular}[t]{l}$A^{-2} \cdot d \cdot K_{1}$\end{tabular}}}}%
  \end{picture}%
\endgroup%
 
    \end{tabular}
    
    \caption{The arrow states for the right handed virtual trefoil.}
    \label{vtref_states}
\end{figure}

\subsection{The $\Zh$-polynomial} 
Next we define the $\Zh$-bracket, whose normalized $\Zh$-polynomial will be shown to be equal to the DKM-polynomial in the following section. Unlike the DKM-polynomial, the $\Zh$-bracket is defined using the standard state expansion. Let $D$ be a virtual link diagram and begin by constructing $\Zh^{\text{op}}(D)$. A \emph{$\Zh$-state} of $D$ is obtained by choosing the $A$- or $B$-smoothing at a classical crossing of $D$ and expanding according to the Kauffman bracket skein relation: 
\[
\left< \left< \begin{array}{c} \includegraphics[height=.045\textheight]{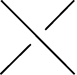} \end{array}\right>\right>_{\zh}= A \left< \left< \begin{array}{c} \includegraphics[height=.045\textheight]{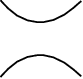} \end{array}\right> \right>_{\zh}+A^{-1} \left<\left<\begin{array}{c} \includegraphics[height=.045\textheight]{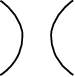} \end{array} \right>\right>_{\zh}.
\]
In the construction of the $\Zh$-state, observe that all crossings involving the $\omega^{\text{op}}$-component are ignored. Hence, a $\Zh$-state consists of the usual state curves of $D$ for the same choice of $A$- or $B$-smoothing at each crossing of $D$, together with the $\omega^{\text{op}}$-component. The $\omega^{\text{op}}$-component only has over-crossings with the state curves of $D$. The $\omega^{\text{op}}$-component retains its orientation, but note that the state curves of $D$ are unoriented. Two $\Zh$-states are considered the same if they may be obtained from one another by semi-welded equivalence.
\newline
\newline
For each $\Zh$-state $S$, define a term $[S]_{\zh}$ in the variables $Z_1,Z_2,\ldots$ as follows. First, give each state curve $C$ in $S$, $C \ne \omega^{\text{op}}$, an arbitrary orientation. This will be denoted $\vec{C}$. Let $\vlk(J,J')$ denote the virtual linking number which counts, with sign, the number of times $J$ over-crosses $J'$. Assign to each state curve $C$ the variable $Z_{|\vlk(\omega^{\text{op}},\vec{C})|/2}$. Note that $\vlk(\omega,\vec{C})$ must indeed be even because $\omega^{\text{op}}$ has two classical crossings with $D$ near each classical crossing of $D$. If either the $A$- or $B$-smoothing is performed at a crossing of $D$, the number of times $\omega^{\text{op}}$ crosses each component remains even. See Figure \ref{fig:zh_poles}. In the case that $\vlk(\omega^{\text{op}},\vec{C})=0$, set $Z_0=1$. Then define for each $\Zh$-state $S$ of $D$ a term $[S]_{\zh}$ as follows:
\[
[S]_{\zh}=\prod_{C \in S}    Z_{|\vlk(\omega^{\text{op}},\vec{C})|/2}
\]
\begin{definition}[$\Zh$-bracket $\&$ $\Zh$-polynomial] Let $D$ be an oriented virtual link diagram. For a $\Zh$-state $S$ of $D$, let $\alpha(S)$ be the number of $A$-smoothings, $\beta(S)$ the number of $B$-smoothings, $|S|$ is the number of loops in the state, excluding the $\omega^{\text{op}}$-component. Let $w(D)$ denote the writhe of $D$. The $\Zh$-\emph{bracket} $\langle \langle D \rangle \rangle_{\zh}$ and $\Zh$-\emph{polynomial} $V_{\zh}(D)$ are defined by the following expressions:
\[
 \langle\langle D \rangle\rangle_{\zh} = \sum_{\zh\text{-states } S} A^{\alpha(S) - \beta(S)} (-A^2-A^{-2})^{|S|-1}[ S ]_{\zh}, \quad V_{\zh}(D) = (-A)^{-3\cdot w(D)} \langle\langle D \rangle\rangle_{\zh}.
\]
\end{definition}

\begin{proposition} The $\Zh$-polynomial is a well-defined invariant of oriented virtual links.
\end{proposition}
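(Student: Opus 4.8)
The plan is to follow the classical strategy for proving that a Kauffman-bracket-type state sum is invariant, while carefully tracking the extra data carried by the weights $[S]_{\zh}$. I would divide the argument into a well-definedness step, an easy step for the virtual moves, and the substantive classical Reidemeister steps. First I would verify that $[S]_{\zh}$ is well defined. Reversing the chosen orientation $\vec{C}$ of a state curve $C$ negates $\vlk(\omega^{\text{op}},\vec{C})$, but the exponent $|\vlk(\omega^{\text{op}},\vec{C})|/2$ is unchanged, so the term does not depend on these orientations. Next, $\vlk(\omega^{\text{op}},\vec{C})$ is itself invariant under semi-welded equivalence: a detour move alters no classical crossing, and the $\omega\text{OCC}$ move merely permutes the over-crossings of $\omega^{\text{op}}$ without creating, destroying, or changing the sign of any of them. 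By Theorem \ref{sw_eq_thrm}, $\Zh^{\text{op}}(D)$, and hence each $\Zh$-state, is well defined up to semi-welded equivalence, so $[S]_{\zh}$ is independent of the manner in which the $\omega$-arcs are connected.

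Detour moves and purely virtual Reidemeister moves on $D$ leave the classical crossings of $D$ untouched. They therefore induce a bijection between the $\Zh$-states before and after the move that preserves $\alpha(S)$, $\beta(S)$, $|S|$, and $w(D)$; since smoothing commutes with the virtual move, corresponding states are related by a detour move, and so by the semi-welded invariance just established the weights $[S]_{\zh}$ match. Hence $\langle\langle D\rangle\rangle_{\zh}$ and $V_{\zh}(D)$ are unchanged under these moves.

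The substantive step is invariance under the classical $\Omega 2$ and $\Omega 3$ moves. Here I would fix a smoothing of all crossings of $D$ outside the local region and expand only the crossings inside it, aiming to reduce the resulting weighted local identity to the familiar unweighted one. Two facts make this possible: (i) any closed loop created by the smoothings inside the region is unlinked from $\omega^{\text{op}}$, so its exponent is $0$, it carries the factor $Z_0 = 1$, and it contributes the standard loop value $-A^2-A^{-2}$; and (ii) the $\omega^{\text{op}}$-over-arcs sitting at the crossings involved in the move contribute zero net signed linking to each of the surviving state curves, so those curves carry exactly the weights they have in the post-move diagram. Granting (i) and (ii), the four-term $\Omega 2$ collapse and the $\Omega 3$ identity go through verbatim as in the classical Kauffman bracket.

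For $\Omega 1$, the same local analysis shows that the small loop produced by one of the two smoothings is $\omega^{\text{op}}$-unlinked and that the two $\omega$-arcs flanking the curl cancel in their contribution to the surviving curve, so $\langle\langle D\rangle\rangle_{\zh}$ rescales by $(-A)^{\pm 3}$ exactly as the ordinary bracket; the factor $(-A)^{-3\cdot w(D)}$ then restores invariance, and combined with the previous steps this shows $V_{\zh}$ is a virtual isotopy invariant. The hard part is precisely the linking bookkeeping of items (i) and (ii): confirming that every loop born in a move is unlinked from $\omega^{\text{op}}$, and that the local $\omega$-over-arcs always cancel in sign along the persisting curves. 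I expect to dispatch this by direct inspection of the local pictures of $\Zh^{\text{op}}$ at each crossing (as in Figure \ref{fig:zh_poles}), using that the two $\omega$-over-arcs flanking any crossing of $D$ are oppositely oriented relative to the strands they cross.
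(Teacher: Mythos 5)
Your well-definedness step (independence of the state-curve orientations via the absolute value, and independence of the ordering of the $\omega^{\text{op}}$ over-arcs via semi-welded invariance of $\vlk$) is exactly the paper's argument. Where you diverge is the classical Reidemeister moves. The paper does not keep $\omega^{\text{op}}$ in the local picture at all: it invokes the fact (from \cite{bbc}, Figures 13--15) that the $\omega^{\text{op}}$-component can be pulled entirely off of both sides of each Reidemeister move by semi-welded equivalence, after which the local state-sum identity is literally the classical Kauffman bracket computation with no weights to track. You instead leave the $\omega^{\text{op}}$-arcs in place and propose to verify by hand that every loop born in a move is $\omega^{\text{op}}$-unlinked and that the local over-arcs contribute zero net signed linking to the persisting curves --- the bookkeeping you yourself flag as the hard part. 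That route can be made to work, but note that your items (i) and (ii) are immediate consequences of the paper's observation: once $\omega^{\text{op}}$ has been slid off the region of the move, its arcs there contribute nothing to any virtual linking number, so the weights of all state curves are manifestly unchanged. I would recommend replacing your case-by-case linking analysis with that single reduction; as written, your claims (i) and (ii) are asserted rather than proved, and checking them directly across all smoothings of $\Omega 2$ and $\Omega 3$ (where the two or three crossings carry six or more over-arcs of mixed sign) is considerably more delicate than the one semi-welded isotopy the paper uses.
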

\begin{proof} The virtual linking number $\vlk(J,J')$ is an invariant of welded links (see e.g. \cite{c_ext}, Proposition 4.1.2). Since the subscripts of the variables $Z_i$ are absolute values of virtual linking numbers, the value of the invariant is independent of the chosen orientation of the $\Zh$-state curves. The value of the invariant is also independent of order of the over-crossing arcs of the $\omega^{\text{op}}$-component. This follows from the fact that $\langle\langle D \rangle\rangle$ is invariant under the semi-welded move. Indeed, the classical crossings of $\omega^{\text{op}}$ are not smoothed in the state expansion. Hence, the $\Zh$-states on the left-hand side of the move are semi-welded equivalent to the corresponding $\Zh$-states on the right-hand side of the move. The claim then follows since the virtual linking number is a welded link invariant.
\newline
\newline
Lastly, it must be shown that the $\Zh$-polynomial is invariant under virtual Reidemeister equivalence. Clearly, the $\Zh$-polynomial is unaffected by detour moves. For the classical Reidemeister moves, observe that the $\omega^{\text{op}}$-component can always be pulled off of both sides of the move using semi-welded equivalence (see \cite{bbc}, Figures 13, 14, and 15). Invariance under moves $\Omega 1$, $\Omega 2$, and $\Omega 3$ then follows as in the classical case of the Kauffman bracket. \end{proof}

\begin{example} \label{example_v_tref_zh_poly} Let $D$ again be the right-handed virtual trefoil. Figure \ref{fig:tref_zh_states} shows the four $\Zh$-states of $D$ and their corresponding weights. For the leftmost $\Zh$-state, we have $[S]_{\zh}=1$. For the last three $\Zh$-states, $[S]_{\zh}=Z_1$. Then we have:
\begin{align*}
\langle \langle D \rangle \rangle_{\zh} &= A^{2}\cdot 1 +(2+A^{-2}\cdot (-A^2-A^{-2})) Z_1 \\
&= A^{2}+(1-A^{-4}) Z_1, \,\text{and} \\ V_{\zh}(D) &=(-A)^{-6}(A^{2}+(1-A^{-4}) Z_1). 
\end{align*}
\end{example}

\begin{figure}[htb]
    \centering
    \begin{tabular}{c| c c c c}
     \includegraphics[scale=.7]{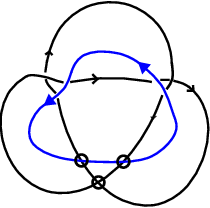} & \def\svgwidth{1.0in} 
\begingroup%
  \makeatletter%
  \providecommand\color[2][]{%
    \errmessage{(Inkscape) Color is used for the text in Inkscape, but the package 'color.sty' is not loaded}%
    \renewcommand\color[2][]{}%
  }%
  \providecommand\transparent[1]{%
    \errmessage{(Inkscape) Transparency is used (non-zero) for the text in Inkscape, but the package 'transparent.sty' is not loaded}%
    \renewcommand\transparent[1]{}%
  }%
  \providecommand\rotatebox[2]{#2}%
  \newcommand*\fsize{\dimexpr\f@size pt\relax}%
  \newcommand*\lineheight[1]{\fontsize{\fsize}{#1\fsize}\selectfont}%
  \ifx\svgwidth\undefined%
    \setlength{\unitlength}{100.06739044bp}%
    \ifx\svgscale\undefined%
      \relax%
    \else%
      \setlength{\unitlength}{\unitlength * \real{\svgscale}}%
    \fi%
  \else%
    \setlength{\unitlength}{\svgwidth}%
  \fi%
  \global\let\svgwidth\undefined%
  \global\let\svgscale\undefined%
  \makeatother%
  \begin{picture}(1,1.11155544)%
    \lineheight{1}%
    \setlength\tabcolsep{0pt}%
    \put(0,0){\includegraphics[width=\unitlength]{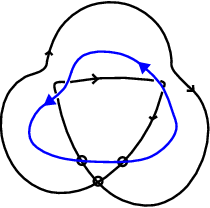}}%
    \put(0.33219464,-0.07){\color[rgb]{0,0,0}\makebox(0,0)[lt]{\lineheight{1.25}\smash{\begin{tabular}[t]{l}$A^{2}$\end{tabular}}}}%
  \end{picture}%
\endgroup%
         & \def\svgwidth{1.0in} 
\begingroup%
  \makeatletter%
  \providecommand\color[2][]{%
    \errmessage{(Inkscape) Color is used for the text in Inkscape, but the package 'color.sty' is not loaded}%
    \renewcommand\color[2][]{}%
  }%
  \providecommand\transparent[1]{%
    \errmessage{(Inkscape) Transparency is used (non-zero) for the text in Inkscape, but the package 'transparent.sty' is not loaded}%
    \renewcommand\transparent[1]{}%
  }%
  \providecommand\rotatebox[2]{#2}%
  \newcommand*\fsize{\dimexpr\f@size pt\relax}%
  \newcommand*\lineheight[1]{\fontsize{\fsize}{#1\fsize}\selectfont}%
  \ifx\svgwidth\undefined%
    \setlength{\unitlength}{100.06739044bp}%
    \ifx\svgscale\undefined%
      \relax%
    \else%
      \setlength{\unitlength}{\unitlength * \real{\svgscale}}%
    \fi%
  \else%
    \setlength{\unitlength}{\svgwidth}%
  \fi%
  \global\let\svgwidth\undefined%
  \global\let\svgscale\undefined%
  \makeatother%
  \begin{picture}(1,1.11155544)%
    \lineheight{1}%
    \setlength\tabcolsep{0pt}%
    \put(0,0){\includegraphics[width=\unitlength]{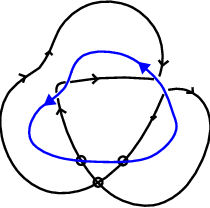}}%
    \put(0.33219464,-0.07){\color[rgb]{0,0,0}\makebox(0,0)[lt]{\lineheight{1.25}\smash{\begin{tabular}[t]{l}$1$\end{tabular}}}}%
  \end{picture}%
\endgroup%
 
      &   \def\svgwidth{1.0in} 
\begingroup%
  \makeatletter%
  \providecommand\color[2][]{%
    \errmessage{(Inkscape) Color is used for the text in Inkscape, but the package 'color.sty' is not loaded}%
    \renewcommand\color[2][]{}%
  }%
  \providecommand\transparent[1]{%
    \errmessage{(Inkscape) Transparency is used (non-zero) for the text in Inkscape, but the package 'transparent.sty' is not loaded}%
    \renewcommand\transparent[1]{}%
  }%
  \providecommand\rotatebox[2]{#2}%
  \newcommand*\fsize{\dimexpr\f@size pt\relax}%
  \newcommand*\lineheight[1]{\fontsize{\fsize}{#1\fsize}\selectfont}%
  \ifx\svgwidth\undefined%
    \setlength{\unitlength}{100.06739044bp}%
    \ifx\svgscale\undefined%
      \relax%
    \else%
      \setlength{\unitlength}{\unitlength * \real{\svgscale}}%
    \fi%
  \else%
    \setlength{\unitlength}{\svgwidth}%
  \fi%
  \global\let\svgwidth\undefined%
  \global\let\svgscale\undefined%
  \makeatother%
  \begin{picture}(1,1.11155544)%
    \lineheight{1}%
    \setlength\tabcolsep{0pt}%
    \put(0,0){\includegraphics[width=\unitlength]{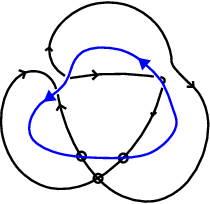}}%
    \put(0.33219464,-0.07){\color[rgb]{0,0,0}\makebox(0,0)[lt]{\lineheight{1.25}\smash{\begin{tabular}[t]{l}$1$\end{tabular}}}}%
  \end{picture}%
\endgroup%
  &  \def\svgwidth{1.0in}
\begingroup%
  \makeatletter%
  \providecommand\color[2][]{%
    \errmessage{(Inkscape) Color is used for the text in Inkscape, but the package 'color.sty' is not loaded}%
    \renewcommand\color[2][]{}%
  }%
  \providecommand\transparent[1]{%
    \errmessage{(Inkscape) Transparency is used (non-zero) for the text in Inkscape, but the package 'transparent.sty' is not loaded}%
    \renewcommand\transparent[1]{}%
  }%
  \providecommand\rotatebox[2]{#2}%
  \newcommand*\fsize{\dimexpr\f@size pt\relax}%
  \newcommand*\lineheight[1]{\fontsize{\fsize}{#1\fsize}\selectfont}%
  \ifx\svgwidth\undefined%
    \setlength{\unitlength}{100.06739044bp}%
    \ifx\svgscale\undefined%
      \relax%
    \else%
      \setlength{\unitlength}{\unitlength * \real{\svgscale}}%
    \fi%
  \else%
    \setlength{\unitlength}{\svgwidth}%
  \fi%
  \global\let\svgwidth\undefined%
  \global\let\svgscale\undefined%
  \makeatother%
  \begin{picture}(1,1.11155544)%
    \lineheight{1}%
    \setlength\tabcolsep{0pt}%
    \put(0,0){\includegraphics[width=\unitlength]{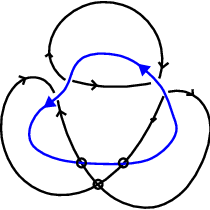}}%
    \put(0.1,-0.07){\color[rgb]{0,0,0}\makebox(0,0)[lt]{\lineheight{1.25}\smash{\begin{tabular}[t]{l}$A^{-2}\cdot d$\end{tabular}}}}%
  \end{picture}%
\endgroup%
 
    \end{tabular}
    \caption{The $\Zh$-states and weights for the right-handed virtual trefoil.}
    \label{fig:tref_zh_states}
\end{figure}
   
\subsection{The$\Zh$-polynomial $\&$ the DKM-polynomial} Example \ref{example_v_tref_zh_poly} is generic. Here we show that the $\Zh$-polynomial is equal to the DKM-polynomial after the obvious substitution $Z_i \to K_i$ for all $i$. This is an immediate consequence of the following lemma.

\begin{lemma}
Let $D$ be a virtual link diagram. Let $C$ simultaneously denote a closed curve in the $\Zh$-state $S$ for the $\Zh$-bracket and the corresponding arrow state in the DKM-bracket. Give $C$ an arbitrary orientation $\vec{C}$. Then the contribution of $C$ to $[S]$ in $\langle\langle D \rangle\rangle$ is $K_{|\vlk(\omega^{\text{op}},\vec{C})|/2}$.  
\label{main_lemma}
\end{lemma}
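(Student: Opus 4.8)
The plan is to prove the sharper, loop-by-loop identity $a(C)=|\vlk(\omega^{\text{op}},\vec{C})|/2$, from which the asserted contribution $K_{a(C)}=K_{|\vlk(\omega^{\text{op}},\vec{C})|/2}$ is immediate. The starting point is that a $\Zh$-state $S$ and the corresponding arrow state arise from the \emph{same} choice of $A$- or $B$-smoothing at each classical crossing of $D$, so the two states have identical underlying collections of loops. The only difference is in how each state records the smoothings: the arrow state records them with poles, while the $\Zh$-state records them with the over-crossing arcs of $\omega^{\text{op}}$, which are never smoothed. Thus it suffices to compare, on a single loop $C$, the reduced number of poles (which is $2a(C)$) with the quantity $|\vlk(\omega^{\text{op}},\vec{C})|$.

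First I would carry out the local analysis at a single classical crossing $x$ of $D$. Recall that $\omega^{\text{op}}$ has exactly two over-crossings with the state curves near $x$, and this remains true after either smoothing is performed. Using the local pictures of Figure \ref{fig:zh_poles}, I would identify how these two over-crossings sit relative to the pole data of the arrow state: each over-crossing of $\omega^{\text{op}}$ with $C$ occurs at a point carrying the information of a pole, and the alternating orientation of the state curve between consecutive over-crossings is precisely the DKM disorientation. The crossing sign of each $\omega^{\text{op}}$-over-crossing, read against the chosen orientation $\vec{C}$ and the conventions of Figure \ref{fig_local_int_num}, is recorded by the direction in which the corresponding pole points. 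In particular, the two over-crossings produced by a smoothing that creates no poles (the oriented, Seifert smoothing) must contribute net zero to the signed count, while those coming from a disoriented smoothing are the ones whose poles survive to the reduced state.

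The heart of the argument is then to match the pole-reduction rule of Figure \ref{arrow_reduction} with the cancellation that computes the virtual linking number. Traversing $C$, the poles occur exactly at the $\omega^{\text{op}}$-over-crossings, and the alternating-orientation convention translates the DKM rule ``cancel two neighboring poles pointing the same direction'' into ``cancel two neighboring over-crossings of opposite sign.'' Since $\vlk(\omega^{\text{op}},\vec{C})$ is by definition the signed count of these over-crossings, removing oppositely signed neighboring pairs (cyclically around $C$) until none remain leaves exactly $|\vlk(\omega^{\text{op}},\vec{C})|$ poles. Hence $2a(C)=|\vlk(\omega^{\text{op}},\vec{C})|$, so $a(C)=|\vlk(\omega^{\text{op}},\vec{C})|/2$; the absolute value makes the result independent of the arbitrary orientation $\vec{C}$, consistent with the evenness of $\vlk(\omega^{\text{op}},\vec{C})$ noted in the definition of the $\Zh$-bracket. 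Summing the product over all loops then identifies $[S]$ with $[S]_{\zh}$ under $Z_i\mapsto K_i$.

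I expect the sign bookkeeping of the last step to be the main obstacle. The DKM reduction cancels poles pointing the \emph{same} way, whereas the virtual linking number cancels crossings of \emph{opposite} sign, and reconciling the two requires tracking the alternating orientation of $C$ between poles together with the crossing-sign conventions for the two smoothing types at both positive and negative crossings and for either placement of $\omega^{\text{op}}$. The supporting geometric content is carried entirely by Figure \ref{fig:zh_poles}, so the real work is verifying that this figure correctly encodes the pole-versus-over-crossing correspondence in all of these cases; once that local verification and the translation of the reduction rule are in place, the loop-by-loop identity, and hence the lemma, follows.
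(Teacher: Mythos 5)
Your proposal follows essentially the same route as the paper's proof (which is folded into the proof of Theorem \ref{thm_zh_eq_DKM}): first the bijection between poles and $\omega^{\text{op}}$ over-crossings, obtained by pulling $\omega^{\text{op}}$ off the oriented smoothings in pairs via $\Omega 2$ moves as in Figure \ref{fig:zh_poles}, and then the local check that consecutive canceling poles contribute $0$ and consecutive non-canceling poles contribute $\pm 2$ to $\vlk(\omega^{\text{op}},\vec{C})$, which is exactly the content of Figures \ref{fig:canceling_poles} and \ref{fig:noncanceling_poles}. Your explicit cyclic-word cancellation argument at the end is just a slightly more formal phrasing of the paper's conclusion, and you correctly identify the case-by-case sign verification as the substantive step.
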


\begin{theorem} \label{thm_zh_eq_DKM} The $\Zh$- and DKM-polynomials are equivalent virtual link invariants.
\end{theorem}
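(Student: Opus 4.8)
The plan is to compare the two state sums term by term, reducing everything to Lemma \ref{main_lemma}. First I would make explicit the natural bijection between the arrow states of $D$ appearing in the DKM-bracket and the $\Zh$-states of $D$ appearing in the $\Zh$-bracket. Both families are indexed by the same data, namely a choice, at each classical crossing of the base diagram $D$, of one of the two Kauffman resolutions ($A$- or $B$-smoothing). In the DKM-bracket this choice is recorded as the oriented or the disoriented smoothing, the latter carrying a pair of poles; in the $\Zh$-bracket the identical resolution is performed on $D$ while the over-crossing arcs of $\omega^{\text{op}}$ are simply carried along. Each choice of resolutions thus determines a single state $S$ that can be read either way, exactly as in the hypothesis of Lemma \ref{main_lemma}.

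Next I would verify that every factor in the two state sums except the final multiplicative weight agrees under this identification. Since both readings of $S$ come from the same resolution data, the counts $\alpha(S)$ and $\beta(S)$ of $A$- and $B$-smoothings coincide, and the writhe $w(D)$ used in the normalization is unchanged. The only subtle point is the loop count $|S|$: in the DKM-bracket the poles decorate the underlying state curves of $D$ but introduce no new loops, while in the $\Zh$-bracket the $\omega^{\text{op}}$-component is by definition excluded from the count. Hence in both readings $|S|$ equals the number of Kauffman state loops of $D$ for the chosen resolution, and the factor $A^{\alpha(S)-\beta(S)}(-A^2-A^{-2})^{|S|-1}$ is literally identical.

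It then remains only to match the two weights $[S]$ and $[S]_{\zh}$, and this is precisely what Lemma \ref{main_lemma} provides. For each state loop $C$, given an arbitrary orientation $\vec{C}$, the lemma identifies the contribution of $C$ to the DKM weight $[S]=\prod_{C}K_{a(C)}$ as $K_{|\vlk(\omega^{\text{op}},\vec{C})|/2}$; equivalently, $a(C)=|\vlk(\omega^{\text{op}},\vec{C})|/2$. Since $[S]_{\zh}=\prod_{C}Z_{|\vlk(\omega^{\text{op}},\vec{C})|/2}$ by definition, the substitution $Z_i\mapsto K_i$ sends $[S]_{\zh}$ to $[S]$ factor by factor. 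Taking the product over all state loops and summing over all states gives $\langle\langle D\rangle\rangle_{\zh}\big|_{Z_i\mapsto K_i}=\langle\langle D\rangle\rangle$, and applying the common normalization $(-A)^{-3w(D)}$ then yields $V_{\zh}(D)\big|_{Z_i\mapsto K_i}=V_{DKM}(D)$, which is the asserted equivalence.

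The substantive content of the whole argument is concentrated entirely in Lemma \ref{main_lemma} — the identification of the number of poles on a reduced state curve with the virtual linking number of that curve against $\omega^{\text{op}}$ — and that is where I would expect the main obstacle to lie. Granting the lemma, the theorem reduces to the routine term-by-term comparison above of two state sums sharing identical $A$-weightings and normalization.
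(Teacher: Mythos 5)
Your term-by-term comparison is correct and is exactly the reduction the paper has in mind when it says the theorem ``is an immediate consequence'' of Lemma \ref{main_lemma}: the two state sums are indexed by the same resolution data, the exponents $\alpha(S)-\beta(S)$, the loop count $|S|$ (with the $\omega^{\text{op}}$-component excluded), and the normalization $(-A)^{-3w(D)}$ all agree, and the weights $[S]$ and $[S]_{\zh}$ match under the substitution $Z_i\mapsto K_i$ precisely by the lemma. The difficulty is that in this paper Lemma \ref{main_lemma} is stated without a separate proof; the proof environment attached to Theorem \ref{thm_zh_eq_DKM} is in fact the proof of that lemma, and there is no independent argument for it anywhere else. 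So by ``granting the lemma'' you have black-boxed essentially all of the mathematical content of the paper's proof of this theorem, and your writeup supplies only the bookkeeping that the authors treat as routine.

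Concretely, what you would still need to establish is the identity $a(C)=|\vlk(\omega^{\text{op}},\vec{C})|/2$ for each state curve $C$. The paper's argument runs as follows. First arrange $\omega^{\text{op}}$ so that its only crossings are over-crossings of $D$ and virtual crossings, with every classical crossing of $D$ lying on one fixed side of $\omega^{\text{op}}$. At an oriented (Seifert) smoothing the two nearby over-crossing arcs of $\omega^{\text{op}}$ can be pulled off the state curves by an $\Omega 2$ move, so they contribute nothing; at a disoriented smoothing each of the two poles corresponds to exactly one surviving over-crossing arc. This gives a bijection between the poles of the arrow state and the over-crossings of the state curves by $\omega^{\text{op}}$. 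One then checks that each of the four configurations of a consecutive \emph{canceling} pair of poles contributes $0$ to $\vlk(\omega^{\text{op}},\vec{C})$, while each of the four \emph{non-canceling} configurations contributes $\pm 2$, so that the reduced pole count and $|\vlk(\omega^{\text{op}},\vec{C})|$ agree up to the factor of two. Without this case analysis the equality of the two weights, and hence of the two polynomials, is asserted rather than proved.
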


\begin{proof} First we place the $\omega^{\text{op}}$-component in a useful position relative to $D$. As discussed in Section \ref{sec_zh_defn}, it may be assumed that the $\omega^{\text{op}}$-component has been drawn so that the only crossings involving $\omega^{\text{op}}$ are classical over-crossings with $D$ and virtual crossings with $D$. Thus, there is a well-defined right and left side of $\omega^{\text{op}}$. Furthermore, it may be supposed that $\omega^{\text{op}}$ has been drawn so that all classical crossing of $D$ are to the right of $\omega$ (see Figure \ref{zh_construction}). 
\newline
\newline
Now, let $S$ be a $\Zh$-state of the diagram $D$. Observe that the poles and over-crossings of $D$ by $\omega^{\text{op}}$ can be placed in one-to-one correspondence. For an oriented smoothing at a crossing of $D$, the $\omega^{\text{op}}$-component can be pulled off of $D$ using an $\Omega 2$ move. See Figure \ref{fig:zh_poles}, left. Hence, all such over-crossing arcs can be removed in pairs. This leaves only over-crossing arcs at the unoriented smoothings of $S$. At an unoriented smoothing, there is exactly one over-crossing arc of $\omega^{\text{op}}$ for each pole. See Figure \ref{fig:zh_poles}, right. This gives the desired bijection between poles and $\omega^{\text{op}}$ over-crossings. 
\newline

\begin{figure}
    \centering
    \includegraphics[scale=1.2]{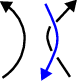}\quad \quad \quad \includegraphics[scale=1.2]{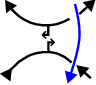}
    \caption{The correspondence between $\omega^{\text{op}}$ over-crossings and poles.}
    \label{fig:zh_poles}
\end{figure}

 Now consider two \textit{consecutive} poles as one walks along a fixed component $C$ of the $\Zh$-state $S$. Due to the alternating orientation of the arcs between poles in the state of $\langle\langle D \rangle\rangle$, there are only four possible configurations of consecutive canceling poles. These are shown in Figure \ref{fig:canceling_poles}, where the consecutive poles on $C$ are connected by a dotted black arc. Let $\vec{C}$ be an arbitrary orientation on $C$. From Figure \ref{fig:canceling_poles}, we see that for any orientation $\vec{C}$ of $C$, the contribution to $\vlk(\omega^{\text{op}},\vec{C})$ for the two over-crossing arcs is zero. Similarly, following the alternating orientation, there are four configurations of consecutive oriented poles that do not cancel. These are shown in Figure \ref{fig:noncanceling_poles}. In this case, the contribution to $\vlk(\omega^{\text{op}},\vec{C})$ from the over-crossing arcs is $\pm 2$. Hence, the total contribution of $C$ to $[S]$ is $K_{|\vlk(\omega^{\text{op}},\vec{C})|/2}$. This completes the proof of the Lemma \ref{main_lemma} and, consequently, the proof of Theorem \ref{thm_zh_eq_DKM}.\end{proof}
 
\begin{figure}[htb]
    \centering
    \includegraphics[scale=1.1]{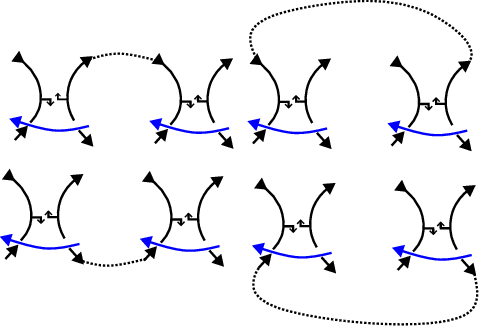} 
    \caption{The four configurations of canceling poles.}
    \label{fig:canceling_poles}
\end{figure}

\begin{figure}[htb]
    \centering
    \includegraphics[scale=1.1]{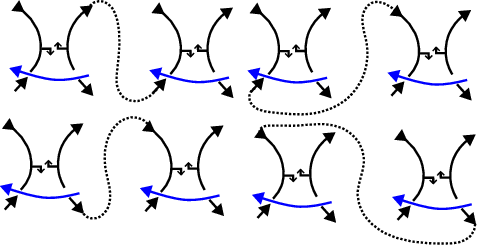} 
    \caption{The four configurations of non-canceling poles.}
     \label{fig:noncanceling_poles}
\end{figure}

\subsection{Applications} Now we apply the identification of the $\Zh$-bracket and the DKM-bracket to give simple proofs of several well-known facts about the DKM-polynomial. Recall that a summand of $\langle \langle D \rangle \rangle$ is called a \emph{surviving state}. Each surviving state has the form $A^{m} K^{j_{1}}_{i_{1}} \ldots K^{j_{n}}_{i_{n}}$. The $k$-\emph{degree} of a surviving state is defined to be the integer $ \sum_{l=1}^n i_{l} \cdot j_{l}$. The $k$-degree corresponds to the half number of oriented poles on some arrow state curve after reduction. For a virtual knot diagram $D$, let $AS(D)$ be the set of $k$-degrees of the surviving states of $D$. 

\begin{example} For the virtual trefoil $D$, Example \ref{example_v_tref_zh_poly} implies that $AS(D)=\{0,1\}$. 
\end{example}

Now consider the following result, which states that the DKM-polynomial is an extension of the Jones polynomial. Note that the extension is proper since, for example, the DKM-polynomial of the right-handed trefoil is not equal to its Jones polynomial.

\begin{theorem}[Dye-Kauffman \cite{dye2009virtual}, Miyazawa \cite{miyazawa2008}]
 For any virtual link diagram $D$, if $D$ is classical then $AS(D)=\{0\}$. Hence, if $D$ is classical, $V_{DKM}(D)$ is equal to the normalized Kauffman bracket polynomial of $D$ (i.e. the Jones polynomial).
\end{theorem}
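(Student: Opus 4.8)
The plan is to work entirely on the $\Zh$-side and convert the statement into one about virtual linking numbers. By Theorem~\ref{thm_zh_eq_DKM} and Lemma~\ref{main_lemma}, a state curve $C$ in a $\Zh$-state $S$ contributes the factor $Z_{|\vlk(\omega^{\text{op}},\vec C)|/2}$ (equivalently $K_{|\vlk(\omega^{\text{op}},\vec C)|/2}$ on the DKM-side), so the $k$-degree of a surviving state is $\sum_{C}|\vlk(\omega^{\text{op}},\vec C)|/2$. Hence it suffices to show that when $D$ is a classical diagram, $\vlk(\omega^{\text{op}},\vec C)=0$ for every state curve $C$ in every $\Zh$-state $S$. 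Granting this, every surviving state reduces to a monomial $A^{m}$ (since $K_{0}=1$ and no factor $K_{i}$ with $i>0$ appears), the state sum collapses to the ordinary Kauffman bracket, and $V_{DKM}(D)$ is the normalized Kauffman bracket, i.e. the Jones polynomial.

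I would first isolate the one point where classicality is used: if $D$ has no virtual crossings, then for any choice of smoothings the state curves $C_{1},\dots,C_{k}$ are pairwise disjoint \emph{simple closed curves in the plane}, each carrying no classical self-crossings and no virtual crossings. Next I would fix an Alexander numbering $f$ of $D$ (which exists because a classical diagram is Alexander numerable) and argue that $f$ descends to each $C_{j}$: labeling each sub-arc of $C_{j}$ by the value of $f$ on the short arc of $D$ it lies on yields a well-defined integer labeling $\Gamma_{j}$ of the short arcs of $C_{j}$ that is an Alexander sub-numbering compatible with the over-crossings of $\omega^{\text{op}}$, in the sense of Figure~\ref{fig_zh_system_defn}, left. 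The verification splits by smoothing type: at an oriented smoothing the two arcs of $D$ that are joined carry the same $f$-value and no pole (no surviving $\omega^{\text{op}}$ over-crossing) sits between them, while at a disoriented smoothing the joined arcs differ by $1$ in $f$-value and exactly one pole sits between them. Thus $(C_{j},\omega^{\text{op}},\Gamma_{j})$ is a genuine Alexander system for the crossingless circle $C_{j}$ (condition $(2b)$ being vacuous since $C_{j}$ has no self-crossings).

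To finish, I would pass to the $2$-component sublink $C_{j}\cup\omega^{\text{op}}$ (forgetting the other state curves does not affect $\vlk(\omega^{\text{op}},\vec C_{j})$). Since $C_{j}$ is trivially Alexander numerable, Corollary~\ref{cor_split_iff_alex_num} supplies the split Alexander system $(C_{j},\bigcirc,\Gamma_{0})$ with $\bigcirc$ disjoint from $C_{j}$, and Lemma~\ref{main_lem} then gives $C_{j}\cup\omega^{\text{op}}\leftrightharpoons_{sw}C_{j}\sqcup\bigcirc$. Because $\vlk$ is a welded, hence semi-welded, invariant, this yields $\vlk(\omega^{\text{op}},\vec C_{j})=\vlk(\bigcirc,\vec C_{j})=0$, as required. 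Equivalently, and more directly, the very existence of the single-valued labeling $\Gamma_{j}$ forces the signed total of $\omega^{\text{op}}$ over-crossings around the closed curve $C_{j}$ to vanish, since the net change of an integer-valued function around a loop is zero.

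The main obstacle is the descent claim of the second paragraph: one must check that the Alexander numbering of $D$ stays consistent after smoothing, i.e. that oriented smoothings join arcs of equal $f$-value carrying no surviving pole while disoriented smoothings join arcs whose $f$-values differ by one and carry exactly one pole. This is precisely the compatibility between the Seifert-coherent (oriented) smoothing and the Alexander numbering, and it is the essential place where the hypothesis that $D$ is classical enters: classicality guarantees that the state curves are planar and self-crossing-free, so that a single-valued integer labeling can be supported on each of them. For a genuinely virtual diagram this fails, which is exactly why the DKM-polynomial properly extends the Jones polynomial.
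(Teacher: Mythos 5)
Your opening reduction is exactly the paper's: by Theorem~\ref{thm_zh_eq_DKM} and Lemma~\ref{main_lemma} it suffices to show that $\vlk(\omega^{\text{op}},\vec{C})=0$ for every curve $C$ of every $\Zh$-state. From that point on, however, the paper does something much simpler than what you propose: since a classical diagram is almost classical, Theorem~\ref{thm_splits} gives $\Zh^{\text{op}}(D)\leftrightharpoons_{sw}D\sqcup\bigcirc$, and computing the (invariant) $\Zh$-bracket from the split representative makes every virtual linking number vanish for free. Your route tries to prove the vanishing state-by-state, and the key step --- your ``descent claim'' --- has a genuine gap.

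The gap is this. The global Alexander numbering $f$ of the classical diagram $D$ is constant on each short arc of $D$, whereas the over-crossing arcs of $\omega^{\text{op}}$ sit in the \emph{interiors} of short arcs of $D$ (near, but not at, the classical crossings). Hence the labeling $\Gamma_j$ you induce on a state curve $C_j$ does \emph{not} jump across the $\omega^{\text{op}}$ over-crossings; it jumps at the disoriented-smoothing junctions, which are different points. So condition (2a) of Definition~\ref{defn_zh_system} fails at every $\omega^{\text{op}}$ crossing, $(C_j,\omega^{\text{op}},\Gamma_j)$ is not an Alexander system, and neither Corollary~\ref{cor_split_iff_alex_num} nor Lemma~\ref{main_lem} can legitimately be applied to it. Your ``more direct'' version has the same problem: the fact that the net change of an integer-valued function around a loop is zero controls the sum of the jumps at the junctions, not the sum of the crossing signs of $\omega^{\text{op}}$ over $C_j$; to transfer one to the other you must show that each surviving $\omega^{\text{op}}$ over-crossing has sign equal to (a fixed global sign times) the label jump at the adjacent junction. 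That sign check is precisely the kind of case analysis the paper carries out in Figures~\ref{fig:canceling_poles} and~\ref{fig:noncanceling_poles} for a different purpose, and your proposal never performs it. Note also that if your descent claim held as stated it would use only Alexander numerability of $D$, not planarity, so your closing remark that classicality is needed to make the state curves planar is not where the hypothesis actually enters; the paper makes this transparent by deducing the classical case as an immediate corollary of the almost classical one. The conclusion is true and your argument is likely repairable, but as written its central step is both unproved and, in the form you state it, false.
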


Since every classical knot is almost classical, this result is an immediate corollary of the following stronger theorem which is easily proved using the $\Zh$-bracket. This result was also independently obtained using a different method by K. Miller (\cite{miller}, Theorem 3.2.1).

\begin{theorem}
 For any virtual link diagram $D$, if $D$ is almost classical then $AS(D)=\{0\}$ and $V_{DKM}(D)$ is equal to the normalized Kauffman bracket polynomial of $D$.
\end{theorem}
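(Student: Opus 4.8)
The plan is to prove the stronger $\Zh$-polynomial version of the statement and then transport it through the identification $V_{\zh} = V_{DKM}$ of Theorem \ref{thm_zh_eq_DKM}. Concretely, I would show that when $D$ is almost classical, every $\Zh$-state $S$ satisfies $[S]_{\zh} = 1$, so that $\langle\langle D\rangle\rangle_{\zh}$ collapses to the ordinary (unnormalized) Kauffman bracket of $D$ and $V_{\zh}(D)$ becomes its normalized version. Since no variable $Z_i$ with $i \ge 1$ then survives, the $k$-degree of every surviving state is $0$, which gives $AS(D) = \{0\}$.

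First I would reduce to a convenient diagram. Because $V_{DKM}$ is a virtual link invariant and the set of $k$-degrees it involves is unaffected by the monomial factor $(-A)^{-3w(D)}$, the quantity $AS(D)$ is itself an invariant of the virtual link type; hence, via Theorem \ref{thm_zh_eq_DKM}, I may replace $D$ by any diagram of the same almost classical link, and in particular take $D$ to be Alexander numerable. The essential observation is that the $\Zh$-bracket $\langle\langle D\rangle\rangle_{\zh}$ depends only on the semi-welded equivalence class of $\Zh^{\text{op}}(D)$: the factors $A^{\alpha(S)-\beta(S)}(-A^2-A^{-2})^{|S|-1}$ involve only smoothings of $D$, while each weight $[S]_{\zh} = \prod_C Z_{|\vlk(\omega^{\text{op}},\vec{C})|/2}$ is built from virtual linking numbers, which are welded invariants. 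This is exactly the content used in the proof that $V_{\zh}$ is well defined, so moves on $\omega^{\text{op}}$ (including $\omega$OCC moves) together with Reidemeister and detour moves on $D$ leave the bracket unchanged.

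Next I would invoke Theorem \ref{thm_zh_splits}: for an Alexander numerable diagram $D$, one has $\Zh^{\text{op}}(D) \leftrightharpoons_{sw} D \sqcup \omega^{\text{op}}$, where $\omega^{\text{op}}$ is an unknot disjoint from $D$. Evaluating the $\Zh$-bracket on this split representative, the $\omega^{\text{op}}$-component has no classical or virtual crossings with $D$ at all. Consequently, for every $\Zh$-state $S$ obtained by smoothing the crossings of $D$ and every state curve $C$, the component $\omega^{\text{op}}$ remains disjoint from $C$, so $\vlk(\omega^{\text{op}},\vec{C}) = 0$ and $Z_{|\vlk(\omega^{\text{op}},\vec{C})|/2} = Z_0 = 1$. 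Hence $[S]_{\zh} = 1$ for all $S$, and
\[
\langle\langle D\rangle\rangle_{\zh} = \sum_{S} A^{\alpha(S)-\beta(S)}(-A^2-A^{-2})^{|S|-1},
\]
which is precisely the unnormalized Kauffman bracket $\langle D\rangle$. Therefore $V_{\zh}(D) = (-A)^{-3w(D)}\langle D\rangle$ is the normalized Kauffman bracket, and since no $Z_i$ with $i\ge 1$ appears, the corresponding surviving states of $\langle\langle D\rangle\rangle$ all have $k$-degree $0$, giving $AS(D)=\{0\}$. Translating back via Theorem \ref{thm_zh_eq_DKM} completes the argument.

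The step I expect to be the main obstacle is the second one: cleanly justifying that the $\Zh$-bracket may be evaluated on the split representative $D \sqcup \omega^{\text{op}}$ rather than on $\Zh^{\text{op}}(D)$ as originally drawn. The point to verify carefully is that no classical crossing of $\omega^{\text{op}}$ is ever smoothed in the state expansion, so that $\omega^{\text{op}}$ enters the weights only through $\vlk(\omega^{\text{op}},\vec{C})$; once this is granted, semi-welded invariance of $\vlk$ makes the reduction to the split diagram routine, and the vanishing of all the linking numbers is then immediate from disjointness.
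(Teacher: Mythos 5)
Your proposal is correct and follows essentially the same route as the paper: split $\Zh^{\text{op}}(D)$ as $D \sqcup \omega^{\text{op}}$ via Theorem \ref{thm_zh_splits}, observe that every $\Zh$-state curve then has $\vlk(\omega^{\text{op}},\vec{C})=0$ so all surviving states have $k$-degree $0$, and conclude via Theorem \ref{thm_zh_eq_DKM}. The step you flag as the main obstacle (evaluating the bracket on the split representative) is already covered by the paper's proposition that the $\Zh$-polynomial is invariant under semi-welded equivalence, and your preliminary reduction to an Alexander numerable diagram is harmless but unnecessary since Theorem \ref{thm_zh_splits} applies to any diagram of an almost classical link.
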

\begin{proof} If $D$ is almost classical, then $\Zh^{\text{op}}(D)$ is semi-welded equivalent to the split link $D \sqcup \bigcirc$ (see Theorem \ref{thm_splits}). Then for each $\Zh$-state curve $C$, $\vlk(\omega^{\text{op}},\vec{C})=0$. All surviving states of $\langle\langle D \rangle\rangle_{\zh}$ must therefore have $k$-degree $0$. The result follows from Theorem \ref{thm_zh_eq_DKM}.
\end{proof}

For a virtual link diagram $D$, let $v(D)$ denote the minimum number of virtual crossings among all virtual link diagrams equivalent to $D$. A useful application of the DKM-polynomial is that it gives a lower bound on $v(D)$. Here we give a simple proof of this fact using the $\Zh$-construction. 
 
\begin{theorem}[Dye-Kauffman \cite{dye2009virtual}, Miyazawa \cite{miyazawa2008}]
 For any virtual link diagram $D$, $$ \max(AS(D)) \leq v(D).$$
\end{theorem}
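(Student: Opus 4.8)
The plan is to compute the $\Zh$-bracket from a diagram realizing the minimal virtual crossing number, using the virtual Alexander system of Definition \ref{defn_virtual_alex_system} in place of the standard $\omega^{\text{op}}$-component, and then to read off the bound directly from the $\vlk$-weights. First I would fix a diagram $D_{\min}$ equivalent to $D$ with exactly $v(D)$ virtual crossings. Because $V_{\zh}$ is a virtual link invariant and the set $AS(D)$ of $k$-degrees is determined by the bracket $\langle\langle D\rangle\rangle_{\zh}$ (the normalization $(-A)^{-3\cdot w(D)}$ is a unit monomial in $A$ and does not affect $k$-degrees), it suffices to bound the $k$-degrees of the surviving states computed from $D_{\min}$.

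Next, rather than building $\Zh^{\text{op}}(D_{\min})$ by placing the over-crossing arcs of $\omega^{\text{op}}$ near the classical crossings, I would use the virtual Alexander system $(D_{\min},\upsilon,\Upsilon)$. By Theorem \ref{thm_main} (equivalently Lemma \ref{main_lem}), $D_{\min}\cup\upsilon$ is semi-welded equivalent to $\Zh^{\text{op}}(D_{\min})$, and since the $\Zh$-bracket is a semi-welded invariant, it may be computed from $D_{\min}\cup\upsilon$. The crucial feature of $\upsilon$ is that its only classical crossings with $D_{\min}$ are over-crossings located near the virtual crossings of $D_{\min}$, with exactly two such over-crossing arcs per virtual crossing. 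Thus $\upsilon$ over-crosses $D_{\min}$ a total of $2v(D)$ times, and since forming a $\Zh$-state only smooths the classical crossings of $D_{\min}$, all $2v(D)$ of these over-crossings persist over the state curves.

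Finally, for any $\Zh$-state $S$ with state curves $\{C\}$, its $k$-degree equals $\sum_{C}\tfrac12\,|\vlk(\upsilon,\vec C)|$. Since $\vlk(\upsilon,\vec C)$ is a signed count of over-crossings of $\upsilon$ over $C$, its absolute value is at most the unsigned number of such over-crossings; summing over all state curves gives $\sum_C|\vlk(\upsilon,\vec C)|\le 2v(D)$, so the $k$-degree of $S$ is at most $v(D)$. As every monomial of $\langle\langle D_{\min}\rangle\rangle_{\zh}$ arises from states of that same $k$-degree, and cancellation in the state sum can only remove monomials, never raise the $k$-degree that appears, every element of $AS(D)$ is at most $v(D)$, which is the claim.

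The main obstacle is the justification underlying the second step: one must be sure that the virtual Alexander system genuinely computes the same bracket as $\omega^{\text{op}}$, which rests on Theorem \ref{thm_main} together with the semi-welded invariance established in the proof of well-definedness of $V_{\zh}$. One must also confirm from Figure \ref{fig_virt_alex_system} the combinatorial count of two $\upsilon$-over-crossings per virtual crossing (an upper bound of two per crossing already suffices, since the weight uses only the unsigned count) and verify that these over-crossings are unaffected by smoothing the classical crossings of $D_{\min}$.
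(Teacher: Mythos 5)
Your proposal is correct and follows essentially the same route as the paper: pass to a diagram realizing $v(D)$, replace $\omega^{\text{op}}$ by the virtual Alexander system $(D',\upsilon,\Upsilon)$ via Theorem \ref{thm_main}, and bound the $k$-degree by the $2v(D)$ over-crossings of $\upsilon$ with $D'$. Your added remarks on the normalization, on $|\vlk|$ being bounded by the unsigned crossing count summed over state curves, and on cancellation never raising a surviving $k$-degree are correct fillings-in of details the paper leaves implicit.
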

\begin{proof} Choose a diagram $D' \leftrightharpoons D$ that realizes the minimum number of virtual crossings. Then construct the virtual Alexander system $(D',\upsilon,\Upsilon)$ for $D'$ (see Definition \ref{defn_virtual_alex_system} and Figure \ref{fig_virt_alex_system}). By Theorem \ref{thm_main}, this is equivalent to the Alexander system for $\Zh^{\text{op}}(D)$. Hence, the virtual Alexander system can be used instead of $\Zh^{\text{op}}(D)$ to compute the DKM-polynomial. The total number of classical crossings between $D'$ and $\upsilon$ is twice the number of virtual crossings of $D$. Hence, the $k$-degree of any surviving state can be at most $v(D)$. 
\end{proof}

\section{Extending quandles with $\Zh$} \label{sec_ext_quandle}

\subsection{The Extended Fundamental Quandle} Joyce \cite{joyce} proved that the fundamental quandle of a classical knot is complete a knot invariant, up to mirror images. Quandles have played a central role in virtual knot theory ever since its inception (see Kauffman \cite{KaV}). Virtual knot invariants can be extracted from the fundamental quandle by counting the number of colorings of $D$ by a finite quandle. These can be computationally time consuming even when the size of the quandle is small. For the related problem of the coloring invariants derived from the fundamental group, we refer the reader to \cite{intractable}. Thus, it is useful to strengthen the coloring invariants obtained from a fixed finite quandle. The aim of this section is to show that the counting and 2-cocycle invariants for a finite quandle can both be strengthened by applying the $\Zh$-construction.
\newline
\newline
Recall that a \emph{quandle} is a set $X$ with a binary operation $*:X \times X\to X$ such that:
\begin{enumerate}
\item[(I)] For all $x\in X$, $x*x=x$.
\item[(II)] For all $x,y \in X$, there is a unique $z \in X$ such that $z*y=x$.
\item[(III)] For all $x,y,z \in X$, $(x*y)*z=(x*z)*(y*z)$.
\end{enumerate}
In Axiom (II), we will write $z=x\,\bar{*}\,y$. For any virtual link diagram $D$, the \emph{fundamental quandle} $Q(D)$ of $D$ is defined as follows. The generators of $Q(D)$ are in one-to-one correspondence with the arcs of $D$. The relations of $Q(D)$ are in one-to-one correspondence with the classical crossings of $D$. The classical crossing relations are shown in Figure \ref{fig_fun_quandle}. 

\begin{figure}[htb]
\begin{tabular}{c c}
 & \\
\begin{tabular}{c}
\def\svgwidth{1in}
\begingroup%
  \makeatletter%
  \providecommand\color[2][]{%
    \errmessage{(Inkscape) Color is used for the text in Inkscape, but the package 'color.sty' is not loaded}%
    \renewcommand\color[2][]{}%
  }%
  \providecommand\transparent[1]{%
    \errmessage{(Inkscape) Transparency is used (non-zero) for the text in Inkscape, but the package 'transparent.sty' is not loaded}%
    \renewcommand\transparent[1]{}%
  }%
  \providecommand\rotatebox[2]{#2}%
  \newcommand*\fsize{\dimexpr\f@size pt\relax}%
  \newcommand*\lineheight[1]{\fontsize{\fsize}{#1\fsize}\selectfont}%
  \ifx\svgwidth\undefined%
    \setlength{\unitlength}{157.16130855bp}%
    \ifx\svgscale\undefined%
      \relax%
    \else%
      \setlength{\unitlength}{\unitlength * \real{\svgscale}}%
    \fi%
  \else%
    \setlength{\unitlength}{\svgwidth}%
  \fi%
  \global\let\svgwidth\undefined%
  \global\let\svgscale\undefined%
  \makeatother%
  \begin{picture}(1,0.81288342)%
    \lineheight{1}%
    \setlength\tabcolsep{0pt}%
    \put(0,0){\includegraphics[width=\unitlength]{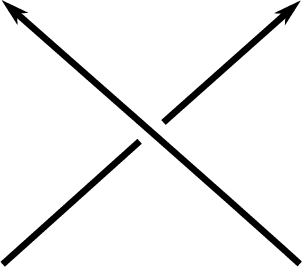}}%
    \put(-0.00528168,0.14517132){\color[rgb]{0,0,0}\makebox(0,0)[lt]{\lineheight{40.54999924}\smash{\begin{tabular}[t]{l}$b$\end{tabular}}}}%
    \put(0.58240181,0.13191535){\color[rgb]{0,0,0}\makebox(0,0)[lt]{\lineheight{40.54999924}\smash{\begin{tabular}[t]{l}$a$\end{tabular}}}}%
    \put(0.56472711,0.73727354){\color[rgb]{0,0,0}\makebox(0,0)[lt]{\lineheight{40.54999924}\smash{\begin{tabular}[t]{l}$b \bar{*} a$\end{tabular}}}}%
  \end{picture}%
\endgroup%
 \end{tabular} & \begin{tabular}{c}\def\svgwidth{1in}
\begingroup%
  \makeatletter%
  \providecommand\color[2][]{%
    \errmessage{(Inkscape) Color is used for the text in Inkscape, but the package 'color.sty' is not loaded}%
    \renewcommand\color[2][]{}%
  }%
  \providecommand\transparent[1]{%
    \errmessage{(Inkscape) Transparency is used (non-zero) for the text in Inkscape, but the package 'transparent.sty' is not loaded}%
    \renewcommand\transparent[1]{}%
  }%
  \providecommand\rotatebox[2]{#2}%
  \newcommand*\fsize{\dimexpr\f@size pt\relax}%
  \newcommand*\lineheight[1]{\fontsize{\fsize}{#1\fsize}\selectfont}%
  \ifx\svgwidth\undefined%
    \setlength{\unitlength}{146.8194203bp}%
    \ifx\svgscale\undefined%
      \relax%
    \else%
      \setlength{\unitlength}{\unitlength * \real{\svgscale}}%
    \fi%
  \else%
    \setlength{\unitlength}{\svgwidth}%
  \fi%
  \global\let\svgwidth\undefined%
  \global\let\svgscale\undefined%
  \makeatother%
  \begin{picture}(1,0.87014254)%
    \lineheight{1}%
    \setlength\tabcolsep{0pt}%
    \put(0,0){\includegraphics[width=\unitlength]{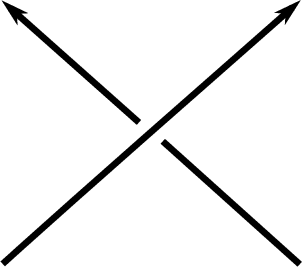}}%
    \put(-0.00565372,0.15539712){\color[rgb]{0,0,0}\makebox(0,0)[lt]{\lineheight{40.54999924}\smash{\begin{tabular}[t]{l}$a$\end{tabular}}}}%
    \put(0.6234259,0.1412074){\color[rgb]{0,0,0}\makebox(0,0)[lt]{\lineheight{40.54999924}\smash{\begin{tabular}[t]{l}$b$\end{tabular}}}}%
    \put(0.16935337,0.7844768){\color[rgb]{0,0,0}\makebox(0,0)[lt]{\lineheight{40.54999924}\smash{\begin{tabular}[t]{l}$b * a$\end{tabular}}}}%
  \end{picture}%
\endgroup%
 \end{tabular} 
\end{tabular}
\caption{Crossing relations for the fundamental quandle.} \label{fig_fun_quandle}
\end{figure}

\begin{definition} The \emph{extended fundamental quandle} of a virtual knot diagram $D$ is the fundamental quandle of $\Zh(D)$: $$\widetilde{Q}(D):=Q(\Zh(D)).$$
\end{definition}

\begin{theorem} \label{thm_ext_fun_quandle} If $D_0 \leftrightharpoons D_1$, then $\widetilde{Q}(D_0) \cong \widetilde{Q}(D_1)$ as quandles.
\end{theorem}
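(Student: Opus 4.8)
The plan is to reduce the statement to the invariance of the fundamental quandle under semi-welded equivalence, and then to check the single genuinely new move. Since $\widetilde{Q}(D)=Q(\Zh(D))$ and, by Theorem \ref{sw_eq_thrm}, $D_0 \leftrightharpoons D_1$ implies $\Zh(D_0) \leftrightharpoons_{sw} \Zh(D_1)$, it suffices to prove that $Q(\cdot)$ is unchanged up to isomorphism by each of the three generating moves of semi-welded equivalence: Reidemeister moves, detour moves, and $\omega$OCC moves. Invariance under Reidemeister and detour moves is the familiar fact that $Q$ is a virtual link invariant: the $\Omega i$ moves are realized by the standard Tietze transformations on the presentation of Figure \ref{fig_fun_quandle}, and detour moves alter only virtual crossings, which contribute neither generators nor relations. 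Thus the entire content of the theorem is the behavior of $Q$ under the $\omega$OCC move.

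First I would record the one structural feature of $\Zh(D)$ that makes the argument clean. In $\Zh(D)$ the component $\omega$ has no classical self-crossings and is never an under-strand, so traveling along $\omega$ one never meets an under-crossing. Hence $\omega$ is a single arc and contributes exactly one generator $g$ to $Q(\Zh(D))$. Every classical crossing involving $\omega$ therefore reads off, via Figure \ref{fig_fun_quandle}, as a relation $b'=b*g$ or $b'=b\,\bar{*}\,g$, where $b,b'$ are the incoming and outgoing $D$-arcs and $g$ is the (unique) over-arc of $\omega$.

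The key step is to verify invariance under $\omega$OCC directly on the presentation, which I would argue most cleanly in the Gauss-diagram formulation of the move (Figure \ref{sw_equiv}, right): the $\omega$OCC move transposes two adjacent arrow tails lying on $\omega$. Generators of $Q$ correspond to arcs, that is, to the segments cut out by the arrowheads (under-crossings), while arrow tails (over-crossings) are interior points of arcs and do not subdivide them. Two adjacent tails have no arrowhead between them, so they lie in the \emph{same} arc both before and after the transposition. Consequently every arrow retains its sign, its head (hence the two under-arcs it relates), and its over-arc, so the generators and relations are literally unchanged and the move induces the identity isomorphism of quandles. Equivalently, in the planar picture the move only reorders the over-passages of the single arc $g$ across $D$ and at most relocates a virtual crossing between two $D$-strands, neither of which affects the relations of Figure \ref{fig_fun_quandle}.

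I expect the only real subtlety — a matter of care rather than difficulty — to lie in this last step: confirming that over-crossings genuinely do not break arcs, and that each crossing relation depends only on the over-arc together with the two local under-arc segments, not on the cyclic order in which $\omega$ meets $D$. Once this is pinned down, the proof assembles at once: $Q$ is invariant under all three generating moves of semi-welded equivalence, so $Q(\Zh(D_0)) \cong Q(\Zh(D_1))$ whenever $\Zh(D_0) \leftrightharpoons_{sw} \Zh(D_1)$, and by Theorem \ref{sw_eq_thrm} this holds whenever $D_0 \leftrightharpoons D_1$. This is precisely $\widetilde{Q}(D_0)\cong\widetilde{Q}(D_1)$.
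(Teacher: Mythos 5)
Your proof is correct and follows essentially the same route as the paper: both reduce, via Theorem \ref{sw_eq_thrm} and the standard invariance of $Q$ under Reidemeister and detour moves, to checking invariance under the $\omega$OCC move, and both verify that move by observing that $\omega$ is never an under-strand, so it is a single over-arc and the crossing relations it participates in are literally unchanged. The only difference is presentational: you carry out the $\omega$OCC check in the Gauss-diagram language (transposing two adjacent tails on the one arc of $\omega$), which handles all sign configurations uniformly, whereas the paper inspects the two local planar pictures of Figure \ref{fig_extended_quandle_ok}.
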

\begin{proof} Since $\Zh(D_0)$ and $\Zh(D_1)$ are semi-welded equivalent and the fundamental quandle of a knot is invariant under all extended Reidemeister and detour moves, it is sufficient to show that the fundamental quandle is invariant under the semi-welded move. That is, it must be checked that $Q$ is invariant under the $\omega\text{OCC}$ move (see Figure \ref{sw_equiv}). There are essentially two cases, as shown in Figure \ref{fig_extended_quandle_ok}. Both sides of the semi-welded move on the left in Figure \ref{fig_extended_quandle_ok} yield the equations $a * v=c$, $b * v=d$. On the right in Figure \ref{fig_extended_quandle_ok}, both sides of the semi-welded move yield the equations $b*v=d$, $c\, \bar{*}\, v=a$. Thus, $Q(\Zh(D_0))\cong Q(\Zh(D_1))$. 
\end{proof}

\begin{figure}[htb]
\tiny
\def\svgwidth{5.2in}
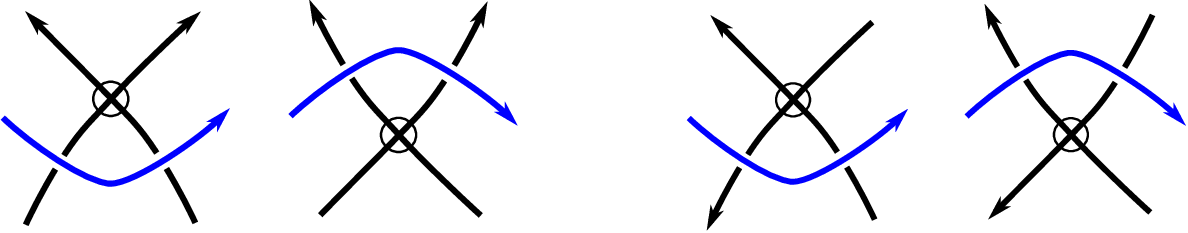 
\normalsize
\caption{Extended fundamental quandles and semi-welded equivalence.}
\label{fig_extended_quandle_ok}
\end{figure}

For the remainder of this subsection, we discuss the extent to which $\widetilde{Q}(D)$ can be interpreted as an extension of $Q(D)$. Recall that the fundamental group $G(D)$ of a virtual knot diagram $D$ is the group generated by the arcs of $D$ and having one Wirtinger relation for every classical crossing of $D$. The fundamental group of $\Zh(D)$, denoted $\widetilde{G}(D)$, is an extension of $G(D)$. The surjective homomorphism $\widetilde{G}(D) \to G(D)$ sends the generator $v$ for the the $\omega$ component to the identity in $G(D)$ (see \cite{c_ext}, Section 2.4). Note that there is no similar quandle map $\widetilde{Q}(D) \to Q(D)$ for the simple reason that quandles do not in general posses an element acting as a multiplicative identity. Thus, in order to define a surjective homomorphism with domain $\widetilde{Q}(D)$, we must add an image element for $v$ in $Q(D)$. This can be done as follows. 
\newline
\newline
Let $(X,*)$ be a quandle and $v$ a symbol not in $X$. Define a new quandle $X_v$ with underlying set $X \sqcup \{v\} $ and operation $*_v:X_v \times X_v \to X_v$. The operation is given by $*_v|X \times X=*$, $x*_v v=x$ for all $x \in X$, $v*_v x=v$ for all $x\in X$, and $v *_v v=v$. 

\begin{lemma} If $(X,*)$ is a quandle, so also is $(X_v,*_v)$.
\end{lemma}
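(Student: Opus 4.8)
The plan is to verify the three quandle axioms (I)--(III) for $(X_v,*_v)$ directly, in each case peeling off the contribution of the new element $v$ and invoking the corresponding axiom for $(X,*)$. Two features of $*_v$ will drive every computation: $v$ is a \emph{right identity} (so $x *_v v = x$ for all $x \in X_v$) and $v$ is \emph{left-absorbing} (so $v *_v x = v$ for all $x \in X_v$). Axiom (I) is then immediate: if $x \in X$ then $x *_v x = x * x = x$ since $(X,*)$ is a quandle, while $v *_v v = v$ by definition.

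For axiom (II) I would argue that for each $y \in X_v$ the right-multiplication map $R_y \colon X_v \to X_v$, $R_y(z) = z *_v y$, is a bijection; this yields both existence and uniqueness of the $z$ with $z *_v y = x$. When $y = v$, the right-identity property gives $R_v = \mathrm{id}_{X_v}$. When $y \in X$, the map $R_y$ sends $X$ into $X$ via $z \mapsto z * y$, which is a bijection of $X$ because $X$ is a quandle, and it fixes $v$; since $X$ and $\{v\}$ are disjoint and $R_y$ respects this partition, $R_y$ is a bijection of $X_v$.

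The main work is axiom (III), self-distributivity $(x *_v y) *_v z = (x *_v z) *_v (y *_v z)$, which I would handle by a case analysis organized according to how many of $x,y,z$ equal $v$. The all-$X$ case is exactly axiom (III) for $(X,*)$. In every case involving at least one $v$, both sides collapse using only the right-identity and left-absorbing rules: if $x = v$ then both sides reduce to $v$ (the left-absorbing rule propagates $v$ through each product); if $z = v$ then both sides reduce to $x *_v y$ by the right-identity rule; if $y = v$ (with $x,z \in X$) both sides reduce to $x * z$; and the cases where two or three of the arguments equal $v$ reduce to a common value in the same way. The only real ``obstacle'' here is bookkeeping---ensuring no case is overlooked---so I would enumerate by the number of $v$'s among $x,y,z$ to keep the verification exhaustive and transparent.
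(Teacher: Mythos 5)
Your proposal is correct and follows essentially the same route as the paper: a direct case analysis using that $v$ is a right identity and left-absorbing, with the all-$X$ cases inherited from $(X,*)$. Your packaging of axiom (II) as bijectivity of right multiplication $R_y$ is a slightly cleaner way to organize the same verification the paper does by hand, but it is not a different argument.
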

\begin{proof} Axioms (I),(II),(III) hold for $(X_v,*_v)$ whenever $x,y,z \in X$. All that remains is to check axioms (II) and (III) when at least one of the elements in each formula is $v$. Let $x \in X$. For axiom (II), note that the unique element $z$ such that $z *_v x=v$ is $v$ and the unique element $z$ such that $z*_v v=x$ is $x$. For axiom (III), let $x,y \in X_v$. Then we have: 
\begin{eqnarray*}
(x*_v y)*_v v &=& x*_v y = (x *_v v)*_v (y *_v v)\\
(x*_v v)*_v y &=& x*_v y= (x *_v y)*_v (v *_v y)\\
(v*_v x)*_v y  &=& v = (v*_v y)*_v (x *_v y)
\end{eqnarray*}
Hence, self-distributivity is satisfied when one or more of the involved elements is $v$.\end{proof}

Now we apply this construction to the fundamental quandle of a virtual knot diagram. If $X=Q(D)$ is the fundamental quandle of a virtual knot diagram $D$ and $v$ is a symbol not in $X$, define $Q_v(D)=X_v$. Set $v$ to be the generator of $\widetilde{Q}(D)$ corresponding to the $\omega$ component.

\begin{theorem} \label{lemma_homomorphism} The extended quandle $\widetilde{Q}(D)$ is an extension of the fundamental quandle $Q(D)$ in the following sense.
\begin{enumerate}
\item There is an injective homomorphism $\iota_v^D:Q(D) \hookrightarrow Q_v(D)$ with $v \not \in \iota_v^D(Q(D))$.
\item There is a surjective homomorphism $\varphi_v^D:\widetilde{Q}(D) \twoheadrightarrow Q_v(D)$ such that $\varphi_v^D(v)=v$.
\end{enumerate}
\end{theorem}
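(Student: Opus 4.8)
The plan is to treat part (1) as essentially formal and to spend the real effort on part (2). For part (1), recall that $Q_v(D)=X_v$ has underlying set $X\sqcup\{v\}$ with $*_v$ restricting to $*$ on $X\times X$. Hence the set inclusion $\iota_v^D\colon X=Q(D)\hookrightarrow X_v=Q_v(D)$ satisfies $\iota_v^D(a*b)=a*b=\iota_v^D(a)*_v\iota_v^D(b)$ for all $a,b\in X$, so it is a quandle homomorphism; it is injective as a set inclusion, and $v\notin X=\iota_v^D(Q(D))$ by construction. This disposes of (1) with no further computation.

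For part (2), I would first unwind the Wirtinger-style presentation of $\widetilde{Q}(D)=Q(\Zh(D))$. Its generators are the arcs of $\Zh(D)$. Since every classical crossing involving $\omega$ is an over-crossing of $\omega$, the component $\omega$ is never classically under-crossed and therefore constitutes a single arc, giving the single generator $v$. Each arc $a$ of the base diagram $D$ is, however, subdivided in $\Zh(D)$ into finitely many \emph{pieces} by the points where $\omega$ over-crosses it. I then define $\varphi_v^D$ on generators by sending $v\mapsto v$ and sending every piece of a base arc $a$ to $\iota_v^D(\bar a)$, where $\bar a\in Q(D)$ is the generator corresponding to $a$. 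By the universal property of a quandle presented by generators and relations, it suffices to verify that $\varphi_v^D$ carries each crossing relation of $\Zh(D)$ to a valid identity in $Q_v(D)$.

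The relations of $\Zh(D)$ fall into two types. At a crossing where $\omega$ over-crosses $D$, the relation reads $a'=a*v$ or $a'=a\,\bar{*}\,v$, where $a,a'$ are the two consecutive pieces of a single base arc meeting at that $\omega$-crossing; applying $\varphi_v^D$ gives $\bar a=\bar a*_v v$ (respectively $\bar a=\bar a\,\bar{*}_v\,v$), which holds because $x*_v v=x$ for all $x$. At a classical self-crossing of $D$ the relation is the ordinary Wirtinger relation among the pieces of the over-arc and the two under-arcs meeting there; since all pieces of a base arc have the same image and $*_v$ restricts to $*$ on $X$, this maps to exactly the corresponding relation of $Q(D)$, which holds in $Q_v(D)$. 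Thus $\varphi_v^D$ is a well-defined homomorphism with $\varphi_v^D(v)=v$. Surjectivity is then immediate: $v$ is hit, and every generator $\bar a$ of $Q(D)$ is the image of any piece of $a$, so the image contains a generating set of $Q_v(D)$.

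The one step that needs genuine care, and that I expect to be the main obstacle, is the verification that the relations descend correctly: matching orientations and crossing signs so that the over-arc at each self-crossing is identified with the right base arc, and checking that both flavors $*$ and $\bar{*}$ of the $\omega$-over-crossing relation collapse under $x*_v v=x$. The content here is precisely the two model computations already recorded for the $\omega$OCC move in the proof of Theorem \ref{thm_ext_fun_quandle}, so once the bookkeeping of pieces versus arcs is set up cleanly, the remaining check is routine.
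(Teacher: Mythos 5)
Your proposal is correct and follows essentially the same route as the paper: both arguments send $v\mapsto v$, collapse every sub-arc of a base arc of $D$ to the corresponding generator of $Q(D)$, and verify the two types of crossing relations (the $\omega$-over-crossing relations collapse via $x*_v v=x$, and the self-crossings of $D$ reproduce the Wirtinger relations of $Q(D)$ inside $Q_v(D)$). The only cosmetic difference is that the paper first eliminates redundant generators to get a presentation by $v$ and the short arcs of $D$ before defining the map, whereas you define it on all Wirtinger generators of $\Zh(D)$ at once and check every relation directly.
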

\begin{proof} For $(1)$, note that $Q_v(D)=Q(D) \sqcup \{v\}$ as sets. Define $\iota_v^D:Q(D) \to Q_v(D)$ by $\iota_v^D(x)=x$. This is a homomorphism by definition of the operation $*_v$. If $x_0,x_1 \in Q(D)$ and $x_0 \ne x_1$, then the image of these elements lies in the subset $Q(D)$ of $Q_v(D)$. Hence $\iota_v^D(x_0)=x_0 \ne x_1=\iota_v^D(x_1)$. This completes the proof of the first claim. 
\newline
\newline
For $(2)$, Figure \ref{fig_ext_onto} shows two pictures of the form $A \to B$. The $A$ part depicts the generators near a classical crossing of $D$ for the group $\widetilde{Q}(D)$. Note that for the $A$ in the left-hand pair, the generator $d$ can be removed from the presentation. Indeed, we have the relations $c*b=d$ and $d*v=e$. Similarly, for the $A$ in the right-hand pair, the generator $b$ can be removed from the presentation for $\widetilde{Q}(D)$. After the removal of all such generators from $\widetilde{Q}(D)$, we find that that there is a presentation for $\widetilde{Q}(D)$ generated by $v$ and the short arcs of $D$. Now, every short arc $s$ of $D$ is contained in a unique arc $u_s$ of $D$. Define $\varphi_v^D$ by $\varphi_v^D(v)=v$ and for every short arc $s$ of $D$, set $\varphi_v^D(s)=u_s$. This has the effect of relabeling the arcs of $\Zh(D)$ at a crossing as depicted in Fig \ref{fig_ext_onto}. The relabeled picture is the ``target'' $B$ for the arrow $A \to B$. 
\newline
\newline
Clearly, the function $\varphi_v^D:\widetilde{Q}(D) \to Q_v(D)$ is surjective. To show it is a homomorphism of quandles, we check the crossing relations. On the left-hand side of Figure \ref{fig_ext_onto}, we have $\varphi_v^D(d*v)=\varphi_v^D(e)=e=e *_v v=\varphi_v^D(d)*_v \varphi_v^D(v)$, $\varphi_v^D(a\bar{*}v)=\varphi_v^D(b)=a=a \bar{*}_v v=\varphi_v^D(a)\bar{*}_v \varphi_v^D(v)$, and $\varphi_v^D(c*b)=\varphi_v^D(d)=e=c *_v a=\varphi_v^D(c)*_v \varphi_v^D(b)$. On the right-hand side of Figure \ref{fig_ext_onto}, we have $\varphi_v^D(d*v)=\varphi_v^D(e)=d=d *_v v=\varphi_v^D(d) *_v \varphi_v^D(v)$, $\varphi_v^D(a\bar{*} v)=\varphi_v^D(b)=a=a \bar{*}_v v=\varphi_v^D(a)\bar{*}_v \varphi_v^D(v)$, and $\varphi_v^D(b\bar{*}d)=\varphi_v^D(c)=c=a \bar{*}_v d=\varphi_v^D(b) \bar{*}_v \varphi_v^D(d)$. This completes the proof of (2). \end{proof}
 
\begin{figure}[htb]
\tiny
\def\svgwidth{5.2in}
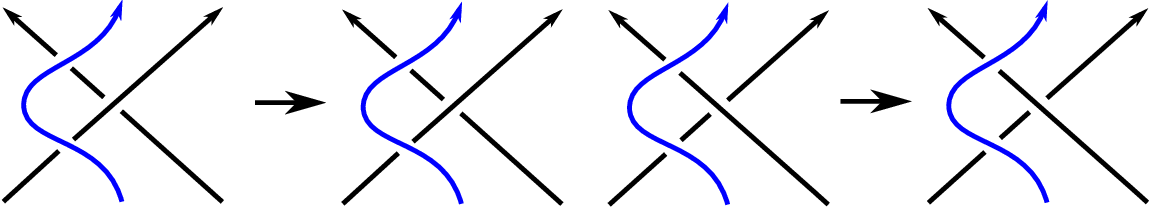 
\normalsize
\caption{The homomorphism $\varphi_v^D:\widetilde{Q}(K) \to Q_v(K)$ from Theorem \ref{lemma_homomorphism}.}
\label{fig_ext_onto}
\end{figure}

\subsection{Coloring and cocycle invariants} 
 If $X$ is any quandle and $D$ is a virtual knot diagram, a \emph{coloring by} $X$ is a quandle homomorphism from the fundamental quandle $Q(D)$ to $X$. A coloring of $D$ by a quandle labels the arcs of $D$ so that the relations at each classical crossing of $D$ are valid when interpreted in $X$. Let $X$ be a finite quandle and let $\text{Col}_X(D)=\text{Hom}(Q(D),X)$ be the set of colorings of $D$ by $X$. Then the cardinality of the finite set $|\text{Col}_X(D)|$ is a virtual knot invariant. Define the set of extended colorings of $D$ by $X$ to be $\widetilde{\text{Col}}_X(D)=\text{Col}_X(\Zh(D))$.

\begin{theorem} \label{thm_extend_cols} If $D_0,D_1$ are equivalent virtual knot diagrams and $X$ is a finite quandle, 
$$|\widetilde{\text{Col}}_X(D_0)|=|\widetilde{\text{Col}}_X(D_1)|.$$ 
\end{theorem}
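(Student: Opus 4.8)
The plan is to deduce the statement directly from Theorem~\ref{thm_ext_fun_quandle}, which already records the invariance of the extended fundamental quandle. The entire remaining content of the present theorem is that passing from an \emph{isomorphism} of quandles to the \emph{cardinality} of the associated coloring set loses nothing; this is a general and elementary observation about $\operatorname{Hom}$-sets, so I expect the proof to be short.

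First I would unwind the definitions. By construction,
\[
\widetilde{\operatorname{Col}}_X(D_i)=\operatorname{Col}_X(\Zh(D_i))=\operatorname{Hom}\bigl(Q(\Zh(D_i)),X\bigr)=\operatorname{Hom}\bigl(\widetilde{Q}(D_i),X\bigr),\qquad i=0,1.
\]
Since $D_0 \leftrightharpoons D_1$, Theorem~\ref{thm_ext_fun_quandle} furnishes a quandle isomorphism $\Phi:\widetilde{Q}(D_0)\xrightarrow{\ \cong\ }\widetilde{Q}(D_1)$. Thus the problem is reduced to comparing the two $\operatorname{Hom}$-sets attached to isomorphic quandles.

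Next I would observe that precomposition with $\Phi$ defines a map $\Phi^{*}:\operatorname{Hom}(\widetilde{Q}(D_1),X)\to\operatorname{Hom}(\widetilde{Q}(D_0),X)$ by $f\mapsto f\circ\Phi$. Because $\Phi$ is a quandle homomorphism, $f\circ\Phi$ is again a quandle homomorphism, so $\Phi^{*}$ is well defined; and because $\Phi$ is invertible, $(\Phi^{-1})^{*}$ is a two-sided inverse for $\Phi^{*}$. Hence $\Phi^{*}$ is a bijection of sets, so $\lvert\operatorname{Hom}(\widetilde{Q}(D_0),X)\rvert=\lvert\operatorname{Hom}(\widetilde{Q}(D_1),X)\rvert$. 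Chaining this equality with the identifications of the first step yields $\lvert\widetilde{\operatorname{Col}}_X(D_0)\rvert=\lvert\widetilde{\operatorname{Col}}_X(D_1)\rvert$, as required.

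There is essentially no hard part here: the substantive work---showing that the $\Zh$-construction produces a quandle that is well defined up to isomorphism, and in particular that it is unaffected by the $\omega\text{OCC}$ move---was already carried out in Theorem~\ref{thm_ext_fun_quandle}. The only points deserving a sentence are the (standard) functoriality of $\operatorname{Hom}(-,X)$ under isomorphisms in the source, and the finiteness of $X$, which guarantees that the cardinalities are finite and hence genuinely comparable as a numerical virtual knot invariant.
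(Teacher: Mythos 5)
Your proof is correct, but it takes a different route from the paper's. You deduce the theorem formally from Theorem \ref{thm_ext_fun_quandle}: since $\widetilde{\text{Col}}_X(D_i)=\operatorname{Hom}(\widetilde{Q}(D_i),X)$ and the extended fundamental quandles are isomorphic, precomposition with the isomorphism gives a bijection of $\operatorname{Hom}$-sets. The paper instead argues directly at the level of diagrams and colorings: it cites the standard one-to-one correspondence between colorings of diagrams related by Reidemeister or detour moves, and then checks the $\omega\text{OCC}$ move by hand using the local relations in Figure \ref{fig_extended_quandle_ok} (the same figure that underlies the proof of Theorem \ref{thm_ext_fun_quandle}), concluding $|\text{Col}_X(\Zh(D_0))|=|\text{Col}_X(\Zh(D_1))|$. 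The substance is the same in both cases --- everything reduces to invariance under the $\omega\text{OCC}$ move --- but your version makes the logical dependence on Theorem \ref{thm_ext_fun_quandle} explicit and avoids re-examining the moves, at the cost of routing through the identification of colorings with quandle homomorphisms; the paper's version is more self-contained and exhibits the bijection of colorings concretely, which is the form of the correspondence it reuses later in the proof of Theorem \ref{thm_extend_cocycle}. One small remark: the finiteness of $X$ is not actually needed for the equality of cardinalities (a bijection suffices in any cardinality); it only matters for the invariant to be a finite number.
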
 
\begin{proof} It is well-known that there is a one-to-one correspondence between the colorings by $X$ of any two virtual knot diagrams related by Reidemeister or detour moves. It follows from the coloring in Figure \ref{fig_extended_quandle_ok} that there is a one-to-one correspondence between the colorings by $X$ for any two diagrams related by an $\omega\text{OCC}$ move. Thus, $|\text{Col}_X(\Zh(D_0))=|\text{Col}_X(\Zh(D_1))|$.
\end{proof}

\begin{corollary} \label{cor_extend_colors_unknot} Let $X$ be a finite quandle and $D$ a virtual link diagram. If $D$ is almost classical, then $|\widetilde{\text{Col}}_X(D)|=|\text{Col}_X(D)|\cdot|X|$. In particular, if $D$ is unknotted, then $|\widetilde{\text{Col}}_X(D)|=|X|^2$.
\end{corollary}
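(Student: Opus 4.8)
The plan is to reduce the computation of $|\widetilde{\text{Col}}_X(D)|$ to a coloring count on a split diagram, where the two components contribute independently. First I would invoke Theorem \ref{thm_splits}: since $D$ is almost classical, $\Zh(D) \leftrightharpoons_{sw} D \sqcup \omega$, where $\omega$ is an unknot disjoint from the diagram of $D$. The key observation is that the number of colorings by $X$ is not merely an invariant of equivalent base diagrams, but a genuine \emph{semi-welded} invariant: the proof of Theorem \ref{thm_extend_cols} exhibits one-to-one correspondences between colorings across Reidemeister moves, detour moves, and the $\omega\text{OCC}$ move, which are exactly the moves generating semi-welded equivalence. Applying this to the semi-welded equivalent diagrams $\Zh(D)$ and $D \sqcup \omega$ gives
\[
|\widetilde{\text{Col}}_X(D)| = |\text{Col}_X(\Zh(D))| = |\text{Col}_X(D \sqcup \omega)|.
\]

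The next step is to factor the colorings of the split diagram $D \sqcup \omega$. The arcs of $D \sqcup \omega$ are precisely the arcs of $D$ together with the single arc of the disjoint unknot $\omega$. Since $\omega$ has no crossings with $D$ and no self-crossings, every crossing relation defining a coloring involves only arcs of $D$. Consequently, a coloring of $D \sqcup \omega$ is exactly a coloring of $D$ together with a completely free choice of an element of $X$ for the arc of $\omega$; that is, restriction to the two sublinks gives a bijection $\text{Col}_X(D \sqcup \omega) \cong \text{Col}_X(D) \times \text{Col}_X(\omega)$ (equivalently, $Q(D \sqcup \omega)$ is the quandle coproduct of $Q(D)$ and $Q(\omega)$, and $\mathrm{Hom}(-,X)$ sends coproducts to products). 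The unknot has a single arc and no relations, so $|\text{Col}_X(\omega)| = |X|$, and therefore
\[
|\widetilde{\text{Col}}_X(D)| = |\text{Col}_X(D)| \cdot |X|,
\]
which is the first assertion.

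For the ``in particular'' clause, I would note that the unknot is classical and hence almost classical, so the first part applies. When $D$ is unknotted it is equivalent to the standard unknot diagram, and the same one-arc count gives $|\text{Col}_X(D)| = |\text{Col}_X(\bigcirc)| = |X|$; substituting into the displayed formula yields $|\widetilde{\text{Col}}_X(D)| = |X| \cdot |X| = |X|^2$.

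The entire argument is a straightforward assembly of earlier results, so there is no substantial obstacle; the one point demanding care is the promotion of Theorem \ref{thm_extend_cols} from a statement about equivalent base diagrams to the assertion that $|\text{Col}_X(\,\cdot\,)|$ is invariant under semi-welded equivalence of the $(n+1)$-component diagrams themselves. This is exactly what the bijections in its proof supply, so transporting the count across the equivalence of Theorem \ref{thm_splits} requires no new work.
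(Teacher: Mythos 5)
Your proposal is correct and follows essentially the same route as the paper: invoke the splitting theorem for almost classical links to replace $\Zh(D)$ by $D \sqcup \omega$ up to semi-welded equivalence, transport the coloring count across that equivalence via the bijections underlying Theorem \ref{thm_extend_cols}, and observe that the disjoint unknotted component contributes a free factor of $|X|$. The paper's proof is just a terser version of this; your added care about colorings being a genuine semi-welded invariant and about the coproduct factorization of $\text{Col}_X(D \sqcup \omega)$ is exactly the content the paper leaves implicit.
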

\begin{proof} By Theorem \ref{thm_zh_splits}, $\Zh(D)$ is semi-welded equivalent to the split diagram $D \sqcup \bigcirc$. Hence, by Theorem \ref{thm_extend_cols}, we have that $|\widetilde{\text{Col}}_X(D)|=|\text{Col}_X(D)|\cdot |X|$. 
\end{proof}

\begin{figure}[htb]
\tiny
\[
\xymatrix{\begin{array}{c}\def\svgwidth{2.5in}
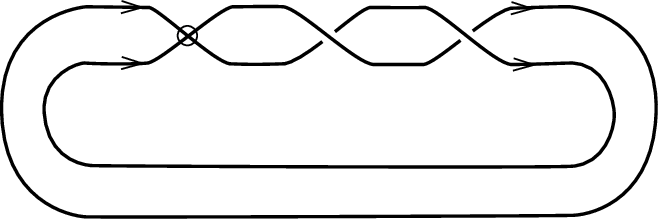 \end{array} \ar[r]^-{\Zh} & \begin{array}{c} \def\svgwidth{2.5in}
\begingroup%
  \makeatletter%
  \providecommand\color[2][]{%
    \errmessage{(Inkscape) Color is used for the text in Inkscape, but the package 'color.sty' is not loaded}%
    \renewcommand\color[2][]{}%
  }%
  \providecommand\transparent[1]{%
    \errmessage{(Inkscape) Transparency is used (non-zero) for the text in Inkscape, but the package 'transparent.sty' is not loaded}%
    \renewcommand\transparent[1]{}%
  }%
  \providecommand\rotatebox[2]{#2}%
  \newcommand*\fsize{\dimexpr\f@size pt\relax}%
  \newcommand*\lineheight[1]{\fontsize{\fsize}{#1\fsize}\selectfont}%
  \ifx\svgwidth\undefined%
    \setlength{\unitlength}{315.70829891bp}%
    \ifx\svgscale\undefined%
      \relax%
    \else%
      \setlength{\unitlength}{\unitlength * \real{\svgscale}}%
    \fi%
  \else%
    \setlength{\unitlength}{\svgwidth}%
  \fi%
  \global\let\svgwidth\undefined%
  \global\let\svgscale\undefined%
  \makeatother%
  \begin{picture}(1,0.34567056)%
    \lineheight{1}%
    \setlength\tabcolsep{0pt}%
    \put(0,0){\includegraphics[width=\unitlength]{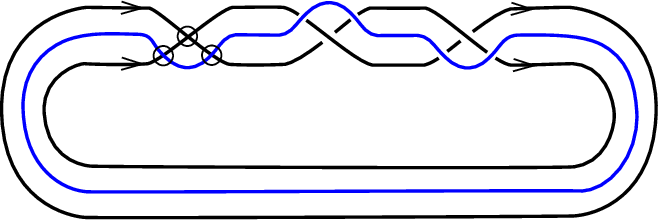}}%
    \put(0.47628235,0.01564207){\color[rgb]{0,0,0}\makebox(0,0)[lt]{\lineheight{40.54999924}\smash{\begin{tabular}[t]{l}$v$\end{tabular}}}}%
    \put(0.73573077,0.20376804){\color[rgb]{0,0,0}\makebox(0,0)[lt]{\lineheight{40.54999924}\smash{\begin{tabular}[t]{l}$f$\end{tabular}}}}%
    \put(0.73056243,0.32160518){\color[rgb]{0,0,0}\makebox(0,0)[lt]{\lineheight{40.54999924}\smash{\begin{tabular}[t]{l}$c$\end{tabular}}}}%
    \put(0.66440833,0.31953786){\color[rgb]{0,0,0}\makebox(0,0)[lt]{\lineheight{40.54999924}\smash{\begin{tabular}[t]{l}$b$\end{tabular}}}}%
    \put(0.69231705,0.24821537){\color[rgb]{0,0,0}\makebox(0,0)[lt]{\lineheight{40.54999924}\smash{\begin{tabular}[t]{l}$a$\end{tabular}}}}%
    \put(0.49282084,0.29576371){\color[rgb]{0,0,0}\makebox(0,0)[lt]{\lineheight{40.54999924}\smash{\begin{tabular}[t]{l}$d$\end{tabular}}}}%
    \put(0.58688387,0.20583537){\color[rgb]{0,0,0}\makebox(0,0)[lt]{\lineheight{40.54999924}\smash{\begin{tabular}[t]{l}$e$\end{tabular}}}}%
  \end{picture}%
\endgroup%
\end{array}} 
\]
\normalsize
\caption{A labeling the arcs of the extended fundamental quandle of the positive virtual trefoil.}
\label{fig_virt_terf}
\end{figure}

\begin{example} This example will show that the extended colorings by a finite quandle can distinguish virtual knot types even in the case that the coloring invariants for all finite quandles cannot. Let $D$ be the positive virtual trefoil knot diagram in Figure \ref{fig_virt_terf}, left. Then it is easy to see that $Q(D)$ is the trivial one element quandle. For any finite quandle $X$, it follows that $|\text{Col}_X(D)|=|X|$ and hence $D$ cannot be distinguished from the unknot using this invariant. 
\newline
\newline
A presentation for the extended fundamental quandle can be calculated from Figure \ref{fig_virt_terf}:
\[
\widetilde{Q}(D)=\langle a,b,c,d,e,f,v| c*e=d,a*b=c, e*v=a,f*v=b,d*v=b,e*v=f \rangle. 
\]
Here $v$ is the generator corresponding to $\omega$. Let $(X,*)$ be the Alexander quandle on $\mathbb{Z}_7$ with $t=3$. In other words, $x*y=3x+5y$ for all $x,y \in \mathbb{Z}_7$. The relation matrix is then given by:
\[
\begin{bmatrix}
0 & 0 & 3 & 6 & 5 & 0 & 0 \\
3 & 5 & 6 & 0 & 0 & 0 & 0 \\
6 & 0 & 0 & 0 & 3 & 0 & 5 \\
0 & 6 & 0 & 0 & 0 & 3 & 5 \\
0 & 6 & 0 & 3 & 0 & 0 & 5 \\
0 & 0 & 0 & 0 & 3 & 6 & 5
\end{bmatrix}.
\]
Over $\mathbb{Z}_7$, this matrix has nullity $1$, which means that the system of linear equations over $\mathbb{Z}_7$ has one free variable. This implies that $\widetilde{\text{Col}}_X(K)=|X|^1=7$. Then Corollary \ref{cor_extend_colors_unknot} implies that the positive virtual trefoil is not equivalent to the unknot.
\end{example}

\begin{figure}[htb]
\def\svgwidth{2.85in}
\begingroup%
  \makeatletter%
  \providecommand\color[2][]{%
    \errmessage{(Inkscape) Color is used for the text in Inkscape, but the package 'color.sty' is not loaded}%
    \renewcommand\color[2][]{}%
  }%
  \providecommand\transparent[1]{%
    \errmessage{(Inkscape) Transparency is used (non-zero) for the text in Inkscape, but the package 'transparent.sty' is not loaded}%
    \renewcommand\transparent[1]{}%
  }%
  \providecommand\rotatebox[2]{#2}%
  \newcommand*\fsize{\dimexpr\f@size pt\relax}%
  \newcommand*\lineheight[1]{\fontsize{\fsize}{#1\fsize}\selectfont}%
  \ifx\svgwidth\undefined%
    \setlength{\unitlength}{374.41539669bp}%
    \ifx\svgscale\undefined%
      \relax%
    \else%
      \setlength{\unitlength}{\unitlength * \real{\svgscale}}%
    \fi%
  \else%
    \setlength{\unitlength}{\svgwidth}%
  \fi%
  \global\let\svgwidth\undefined%
  \global\let\svgscale\undefined%
  \makeatother%
  \begin{picture}(1,0.35090132)%
    \lineheight{1}%
    \setlength\tabcolsep{0pt}%
    \put(0,0){\includegraphics[width=\unitlength]{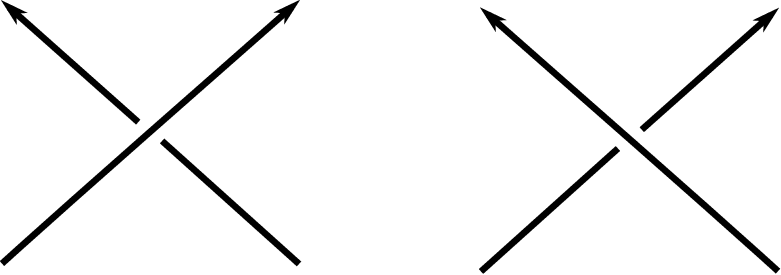}}%
    \put(0.04325068,0.01570404){\color[rgb]{0,0,0}\makebox(0,0)[lt]{\lineheight{40.54999924}\smash{\begin{tabular}[t]{l}$y$\end{tabular}}}}%
    \put(0.31739371,0.01175526){\color[rgb]{0,0,0}\makebox(0,0)[lt]{\lineheight{40.54999924}\smash{\begin{tabular}[t]{l}$x$\end{tabular}}}}%
    \put(0.90062455,0.32083862){\color[rgb]{0,0,0}\makebox(0,0)[lt]{\lineheight{40.54999924}\smash{\begin{tabular}[t]{l}$x$\end{tabular}}}}%
    \put(0.67446562,0.32245402){\color[rgb]{0,0,0}\makebox(0,0)[lt]{\lineheight{40.54999924}\smash{\begin{tabular}[t]{l}$y$\end{tabular}}}}%
    \put(0.12037594,0.32406944){\color[rgb]{0,0,0}\makebox(0,0)[lt]{\lineheight{40.54999924}\smash{\begin{tabular}[t]{l}$\phi(x,y)$\end{tabular}}}}%
    \put(0.72131262,0.01390851){\color[rgb]{0,0,0}\makebox(0,0)[lt]{\lineheight{40.54999924}\smash{\begin{tabular}[t]{l}$\phi(x,y)^{-1}$\end{tabular}}}}%
  \end{picture}%
\endgroup%
 
\normalsize
\caption{The Boltzmann weights for each classical crossing type.}
\label{fig_boltzmann}
\end{figure}

Next we will show that the 2-cocycle counting invariants of $\Zh(D)$ are also invariants of $D$. For $X$ a finite quandle and $A$ an abelian group, recall that a $2$-\emph{cocycle} is a map $\phi:X \times X \to A$ satisfying:
\[
\phi(x,z)\phi(x*z,y*z)=\phi(x*y,z)\phi(x,y), \text{ and } \phi(x,x)=1 \quad \forall x,y,z \in X,
\] 
where the operation in $A$ is written as multiplication. If $\phi$ is a $2$-cocycle of $X$, $D$ is a virtual link diagram, and $\chi$ is a coloring of $D$ by $X$, assign a \emph{Boltzmann weight} $B(\tau,\chi)$ to each classical crossing $\tau$ of $D$ according to Figure \ref{fig_boltzmann}. The cocycle counting invariant of $D$ is defined to be:
\[
\Phi_X^{\phi}(D)=\sum_{\chi \in \text{Col}_X(D)} \prod_{\tau} B(\tau,\chi)\in \mathbb{Z}[A]
\]
This was proved to be an invariant of classical links by Carter et al. \cite{CJKLS}. The same argument works for virtual links.

\begin{definition} For $X$ a finite quandle, $\phi$ a $2$-cocycle of $X$, and $D$ a virtual link diagram, the \emph{extended cocycle counting invariant} is defined to by $\widetilde{\Phi}^{\phi}_X(D)=\Phi_X^{\phi}(\Zh(D))$.
\end{definition}

\begin{theorem} \label{thm_extend_cocycle} If $D_0,D_1$ are equivalent virtual knot diagrams, then $\widetilde{\Phi}^{\phi}_X(D_0)=\widetilde{\Phi}^{\phi}_X(D_1)$.
\end{theorem}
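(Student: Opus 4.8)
The plan is to prove the stronger statement that the cocycle counting invariant $\Phi_X^{\phi}$ is invariant under semi-welded equivalence; the theorem then follows at once, since $D_0 \leftrightharpoons D_1$ implies $\Zh(D_0) \leftrightharpoons_{sw} \Zh(D_1)$ by Theorem \ref{sw_eq_thrm}. Recall that $\Phi_X^{\phi}$ is already known to be invariant under the Reidemeister and detour moves: the argument of Carter et al.~\cite{CJKLS} applies verbatim to virtual link diagrams, with the cocycle condition supplying invariance under $\Omega 3$ and the normalization $\phi(x,x)=1$ supplying invariance under $\Omega 1$. Consequently, the only move left to verify is the $\omega$OCC move.

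For the $\omega$OCC move I would reuse the coloring correspondence established in Theorem \ref{thm_extend_cols}. By the analysis summarized in Figure \ref{fig_extended_quandle_ok}, both local pictures of the $\omega$OCC move impose exactly the same crossing relations on either side (namely $a*v=c$, $b*v=d$ on the left, and $b*v=d$, $c\,\bar{*}\,v=a$ on the right), so there is a canonical bijection between the colorings of the two sides that fixes every generator. First I would record the decisive structural feature visible in these pictures: the only classical crossings involved in an $\omega$OCC move are over-crossings of the $\omega$ component, and no self-crossing of the base $D$ appears, since each relation carries the $\omega$-generator $v$ in its second slot. Thus the move merely permutes the positions of two over-crossings of $\omega$ without creating, destroying, or recoloring any base crossing.

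Next I would compare the Boltzmann weights crossing by crossing. Under the color-fixing bijection, each over-crossing of $\omega$ that is permuted by the move has over-arc color $v$, the same pair of under-arc colors, and the same sign on both sides; reading off Figure \ref{fig_boltzmann}, its Boltzmann weight is therefore literally unchanged. Every remaining crossing of $\Zh(D_0)$ and $\Zh(D_1)$ — in particular every self-crossing of $D$ — lies outside the support of the move and is matched with an identically colored crossing. Since $\prod_{\tau} B(\tau,\chi)$ is a product over the commutative group $A$, reordering the factors is harmless, and hence $\prod_{\tau} B(\tau,\chi) = \prod_{\tau'} B(\tau',\chi')$ for corresponding colorings $\chi,\chi'$. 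Summing over all colorings gives invariance of $\Phi_X^{\phi}$ under the $\omega$OCC move, which completes the proof.

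I expect the main obstacle to be bookkeeping rather than conceptual: one must confirm from Figure \ref{fig_extended_quandle_ok} that the two under-arcs crossing $\omega$ are genuinely distinct short arcs, so that reordering the two crossings does not sequentially recolor a single strand, and one must track the crossing signs through both orientation cases of Figure \ref{fig_extended_quandle_ok} and both sign cases of Figure \ref{fig_boltzmann}. It is worth noting that the cocycle identity itself is not needed for the $\omega$OCC step — it is consumed only in the already-established virtual invariance of $\Phi_X^{\phi}$ — precisely because the $\omega$OCC move never slides $\omega$ across a crossing of $D$, and so never triggers the $\Omega 3$-type transformation that the cocycle condition is designed to absorb.
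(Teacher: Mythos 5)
Your proposal is correct and follows essentially the same route as the paper: reduce to the $\omega$OCC move via Theorem \ref{sw_eq_thrm} and the known Reidemeister/detour invariance of $\Phi_X^{\phi}$, then use the coloring bijection of Figure \ref{fig_extended_quandle_ok} and check that the two permuted over-crossings of $\omega$ contribute the same Boltzmann weights (namely $\phi(a,v)\phi(b,v)$, resp.\ $\phi(a,v)^{-1}\phi(b,v)$) on both sides of the move. Your added remark that the cocycle identity is not consumed in the $\omega$OCC step is accurate, though the paper does not dwell on it.
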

\begin{proof} Since $D_0 \leftrightharpoons D_1$, $\Zh(D_0) \leftrightharpoons_{sw} \Zh(D_1)$. Since $\Phi_X^{\phi}$ is invariant under Reidemeister moves and detour moves, it needs only be shown that $\widetilde{\Phi}_X^{\phi}$ is invariant under the $\omega$OCC move. Suppose then that $\Zh(D_0)$ and $\Zh(D_1)$ are related by an $\omega$OCC move. By the proof of Theorem \ref{thm_ext_fun_quandle}, it follows that there is a one-to-one correspondence between the extended colorings of $\Zh(D_0)$ and $\Zh(D_1)$. From Figure \ref{fig_extended_quandle_ok}, left, the contribution of Boltzmann weights from the two classical crossings is $\phi(a,v)\phi(b,v)$ on each side of the move. Similarly, Figure \ref{fig_extended_quandle_ok}, right, gives a contribution of $\phi(a,v)^{-1} \phi(b,v)$ on each side of the move. Thus, $\widetilde{\Phi}^{\phi}_X(D_0)=\widetilde{\Phi}^{\phi}_X(D_1)$ and the theorem is proved.
\end{proof}

\begin{corollary} \label{cor_extend_cocycle_unknot} Let $X$ be a finite quandle, $D$ a virtual link diagram, and $\phi:X \times X \to A$ a $2$-cocycle. If $D$ is almost classical, then $\widetilde{\Phi}_X^{\phi}(D)=|X|\cdot \Phi_X^{\phi}(D)$. In particular, if $D$ is equivalent to the unknot, then $\widetilde{\Phi}_X^{\phi}(D)=|X|^2 \cdot 1 \in \mathbb{Z}[A]$.
\end{corollary}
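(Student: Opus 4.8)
The plan is to mirror the argument for the coloring invariant in Corollary \ref{cor_extend_colors_unknot}, replacing the count of colorings by the weighted sum defining $\Phi_X^{\phi}$. First I would invoke Theorem \ref{thm_splits}: since $D$ is almost classical, $\Zh(D) \leftrightharpoons_{sw} D \sqcup \bigcirc$, where $\bigcirc$ is an unknot disjoint from $D$. The essential point is that $\widetilde{\Phi}_X^{\phi}(D) = \Phi_X^{\phi}(\Zh(D))$ depends only on the semi-welded class of $\Zh(D)$. This is exactly what the proof of Theorem \ref{thm_extend_cocycle} establishes: $\Phi_X^{\phi}$ is unchanged by Reidemeister moves, detour moves, and the $\omega$OCC move. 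Consequently $\Phi_X^{\phi}(\Zh(D)) = \Phi_X^{\phi}(D \sqcup \bigcirc)$, and it remains only to evaluate the cocycle invariant on the split diagram.

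Next I would compute $\Phi_X^{\phi}(D \sqcup \bigcirc)$ directly from the definition. Because $\bigcirc$ is split from $D$, the only classical crossings of $D \sqcup \bigcirc$ are the self-crossings of $D$, and the disjoint circle $\bigcirc$ contributes a single free generator to the fundamental quandle. Hence a coloring of $D \sqcup \bigcirc$ by $X$ is precisely a pair $(\chi, x)$, where $\chi \in \text{Col}_X(D)$ and $x \in X$ is the arbitrary color of $\bigcirc$; in particular $|\text{Col}_X(D \sqcup \bigcirc)| = |X| \cdot |\text{Col}_X(D)|$. Since $\bigcirc$ has no classical crossings, the Boltzmann weight product $\prod_{\tau} B(\tau, (\chi,x))$ ranges only over the crossings $\tau$ of $D$ and therefore equals $\prod_{\tau} B(\tau,\chi)$, independent of $x$. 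Summing over the $|X|$ choices of $x$ then gives $\Phi_X^{\phi}(D \sqcup \bigcirc) = |X| \cdot \Phi_X^{\phi}(D)$, which combines with the previous paragraph to yield $\widetilde{\Phi}_X^{\phi}(D) = |X| \cdot \Phi_X^{\phi}(D)$.

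For the final assertion, I would specialize to $D$ equivalent to the unknot $U$. As $U$ is classical it is almost classical, and since $U$ has a crossingless diagram, every element of $X$ colors it, with an empty Boltzmann product; thus $\Phi_X^{\phi}(U) = |X| \cdot 1 \in \mathbb{Z}[A]$. Applying the formula just proved gives $\widetilde{\Phi}_X^{\phi}(U) = |X| \cdot \Phi_X^{\phi}(U) = |X|^2 \cdot 1$. I expect no serious obstacle here; the only point requiring care is the first one, namely that $\Phi_X^{\phi}$ legitimately descends to a semi-welded invariant on diagrams carrying a distinguished $\omega$ component, so that the split equivalence of Theorem \ref{thm_splits} may be applied. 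This is precisely the invariance under the $\omega$OCC move verified in the proof of Theorem \ref{thm_extend_cocycle}, so the reduction to the split diagram is justified.
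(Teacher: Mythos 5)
Your proposal is correct and follows essentially the same route as the paper: split $\Zh(D)$ as $D \sqcup \bigcirc$ via the almost classical hypothesis, use the semi-welded invariance of the cocycle invariant established in the proof of Theorem \ref{thm_extend_cocycle}, and observe that the crossingless component contributes a free factor of $|X|$ with trivial Boltzmann weight. The paper's own proof is just a terser version of this; your expansion of the computation on the split diagram is exactly the intended argument.
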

\begin{proof} By Theorem \ref{thm_zh_splits}, $\Zh(D)$ is semi-welded equivalent to the split diagram $D \sqcup \bigcirc$. Hence, by Theorem \ref{thm_extend_cocycle}, we have that $\widetilde{\Phi}_X^{\phi}(D)=|X|\cdot \Phi_X^{\phi}(D)$. 
\end{proof}

\begin{example} Again consider the virtual trefoil in Figure \ref{fig_virt_terf}. Since its fundamental quandle is the trivial one element quandle, any finite quandle $X$ and $2$-cocycle $\phi:X \times X \to A$ of $X$ will have $\Phi^{\phi}_X(D)=|X|\cdot 1 \in \mathbb{Z}[A]$. Hence, $D$ cannot be distinguished from the unknot using any coloring invariant or cocycle counting invariant. We will show that $D$ can be distinguished from the unknot using an extended $2$-cocycle invariant $\widetilde{\Phi}_X^{\phi}(D)$, even when the extended coloring invariant $|\widetilde{\text{Col}}_X(D)|$ does not. Let $(X,*)$ be the dihedral quandle on $\mathbb{Z}_4$. Then $X=\mathbb{Z}_4$ and $x*y=-x+2y \pmod{4}$ for all $x,y \in \mathbb{Z}_4$. The multiplication table is:
\[
\begin{bmatrix}
 0 & 2 & 0 & 2 \\
 3 & 1 & 3 & 1 \\
 2 & 0 & 2 & 0 \\
 1 & 3 & 1 & 3 \\
\end{bmatrix}
\]
In this case, there are 16 colorings of $\Zh(K)$ by $X$, so that $|\widetilde{\text{Col}}_X(K)|=|X|^2$ and the extended coloring invariant does not distinguish $K$ from the unknot. The colorings are:

\[
\begin{array}{|c||c|c|c|c|c|c|c|c|c|c|c|c|c|c|c|c|} \hline 
     & 1 & 2 & 3 & 4 & 5 & 6 & 7 & 8 & 9 & 10 & 11 & 12 & 13 & 14 & 15 & 16 \\ \hline \hline
a     & 0 & 0& 0& 0 & 1& 1& 1& 1 &2 &2 &2 &2 &3 &3 &3 &3 \\ \hline
b     &0 &0&2&2&1&1&3&3&0&0&2&2&1&1&3&3\\ \hline
c     & 0&0&0&0&1&1&1&1&2&2&2&2&3&3&3&3\\ \hline
d     & 0&0&0&0&1&1&1&1&2&2&2&2&3&3&3&3\\ \hline
e     &0 &0&2&2&1&1&3&3&0&0&2&2&1&1&3&3\\ \hline
f     & 0&0&0&0&1&1&1&1&2&2&2&2&3&3&3&3\\ \hline
v     & 0&2&1&3&1&3&0&2&1&3&0&2&0&2&1&3\\ \hline
\end{array}
\]
We will use the $2$-cocycle $\phi:X \times X \to A$ from Carter et al. \cite{CJKLS} (see also Elhamdadi-Nelson \cite{EN}, Example 130): 
\[
\phi(x,y)=\left\{\begin{array}{cc} u & \text{if } (x,y)=(0,1) \text{ or } (0,3) \\ 1 & \text{else} \end{array} \right.
\]
Each extended coloring $\chi$ gives a term $\prod_{\tau} B(\tau,\chi)$:
\[
\phi(f,v)^{-1}\phi(a,b)\phi(e,v)\phi(c,e)\phi(e,v)^{-1}\phi(d,v).
\]
Calculating this product for each coloring above gives $\widetilde{\Phi}_X^{\phi}(K)=14+2u$. As claimed, extended $2$-cocycle invariants can distinguish virtual knot types even when the quandle coloring invariants, quandle $2$-cocycle invariants, and the extended quandle coloring invariants are unable to do so. \emph{Mathematica} \cite{wolfram} code for the this calculation can be found at the \emph{arXiv} webpage for this paper. 
\end{example}

\section{Further Discussion} \label{sec_further}

\subsection{Quantum invariants $\&$ categorification} As is well-known, the Jones polynomial can be expressed as the normalized quantum invariant associated to $U_q(sl_2)$. The DKM-polynomial is an extension of the Jones polynomial in the sense that the DKM-polynomial of a classical knot is equal to its Jones polynomial. In Theorem \ref{thm_zh_eq_DKM}, it was shown that the DKM-polynomial can be naturally obtained from the $\Zh$-construction. Is there a general method for extending quantum or skein theoretic invariants to virtual links using the $\Zh$-construction?
\newline
\newline
A categorification of the DKM-polynomial was given by Dye-Kauffman-Manturov \cite{dkm}. In \cite{tagami}, Tagami showed how to categorify a two-variable version of the DKM-polynomial via a generalization of the Bar-Natan geometric complex. Note that both of these categorifications require coefficients in the field $\mathbb{Z}_2$. Theorem \ref{thm_zh_eq_DKM} realizes the additional variables in the DKM-polynomial with virtual linking numbers. In particular, the variables for each state can be calculated without reducing the poles in the state curves. This simplification suggests that the $\Zh$-bracket might allow for a more straightforward categorification of the DKM-polynomial.

\subsection{Comparing quandle extensions} Extensions of quandles have been heavily studied in the literature. How is the extension $\varphi_v^D:\widetilde{Q}(D) \twoheadrightarrow Q_v(D)$ related to the general theory of extensions of quandles? We mention just two comparisons with other types of quandle extensions. Carter-Kamada-Saito \cite{CKS03} defined for each quandle $(X,*)$ and $2$-cocycle $\phi:X \times X \to A$ a quandle $E(X,A,\phi)$ with underlying set $A \times X$ and operation $\circ:(A \times X) \times (A \times X) \to A \times X$ given by $(a_1, x_1) \circ (a_2, x_2) = (a_1 + \phi(x_1, x_2), x_1 * x_2)$. Here, $(A, +)$ is an abelian group. This is called a \emph{central abelian extension}. Projection onto the second factor $\pi_1:E(X,A,\phi) \to X$ is a surjective quandle homomorphism. Is $\widetilde{Q}(D)$ a central abelian extension $E(Q_v(D),A,\phi)$ for some abelian group $A$ and $2$-cocycle $\phi:Q_v(D) \times Q_v(D) \to A$?  
\newline
\newline
Eisermann \cite{eisermann_coverings}, considered the more general situation of extending quandles by a group $G$, with $G$ not necessarily abelian. Can the extended quandle $\widetilde{Q}(D)$ be realized as a quandle extension of $Q_v(D)$ in this sense? Note that all such extensions are necessarily quandle coverings $p:Q' \to Q$ (see \cite{eisermann_coverings}, Proposition 4.15). Recall that a \emph{quandle covering} is a surjective quandle homomorphism $p:Q' \to Q$ such that $p( x') = p(y')$ implies that $x',y'$ are \emph{behaviorally equivalent}: $a'*x' = a'*y' $ for all $a',x',y' \in Q'$. For example, a central abelian extension is a quandle covering. In this case, the behaviorally equivalent elements take the form $(a_1,x),(a_2,x)$ for $a_1,a_2 \in A$ and $x \in X$. If $(a,y) \in E(X,A,\phi)$, then $(a,y) \circ (a_1,x)=(a+\phi(y,x),y*x)=(a,y) \circ (a_2,x)$. Is the surjection $\varphi_v^D:\widetilde{Q}(D) \to Q_v(D)$ even a quandle covering?

\begin{acknowledgments*}
The authors would like to thank S. Mukherjee for several helpful conversations about quandle coloring and quandle 2-cocycle invariants. We are also indebted to H. A. Dye and A. Kaestner for sharing their knowledge about the DKM-polynomial. The first named author was partially supported by funds and release time from The Ohio State University, Marion campus.
\end{acknowledgments*}


\bibliographystyle{plain}
\bibliography{mil_plus_zh_bib}

\end{document}